\newtheorem{thm}{Theorem}[section]
\newtheorem{la}[thm]{Lemma}
\newtheorem{Defn}[thm]{Definition}
\newtheorem{Exa}[thm]{Example}
\newtheorem{Remark}[thm]{Remark}
\newtheorem{prop}[thm]{Proposition}
\newtheorem{Number}[thm]{\!\!}
\newenvironment{defn}{\begin{Defn}\rm}{\end{Defn}}
\newenvironment{rem}{\begin{Remark}\rm}{\end{Remark}}
\newenvironment{numba}{\begin{Number}\rm}{\end{Number}}
\newenvironment{proof}{{\noindent\bf Proof.}}%
                  {\nopagebreak\hspace*{\fill}$\Box$\medskip\par}
\newcommand{\wb}{\overline}
\newcommand{\ve}{\varepsilon}
\newcommand{\wt}{\widetilde}
\newcommand{\impl}{\Rightarrow}
\newcommand{\mto}{\mapsto}
\newcommand{\N}{{\mathbb N}}
\newcommand{\R}{{\mathbb R}}
\newcommand{\cg}{{\mathfrak g}}
\newcommand{\cL}{{\mathcal L}}
\newcommand{\cO}{{\mathcal O}}
\newcommand{\cB}{{\mathcal B}}
\newcommand{\cR}{{\mathcal R}}
\newcommand{\cE}{{\mathcal E}}
\newcommand{\sub}{\subseteq}
\DeclareMathOperator{\id}{id}
\DeclareMathOperator{\Diff}{Diff}
\DeclareMathOperator{\Evol}{Evol}
\DeclareMathOperator{\evol}{evol}
\DeclareMathOperator{\Reach}{Reach}
\DeclareMathOperator{\Fl}{Fl}
\DeclareMathOperator{\conv}{conv}
\DeclareMathOperator{\ex}{ex}
\DeclareMathOperator{\im}{im}
\newcommand{\one}{1}
\DeclareMathOperator{\Ad}{Ad}
\DeclareMathOperator{\pr}{pr}
\begin{document}
\begin{center}
{\Large\bf
Aspects of control theory on\\[2.2mm]
infinite-dimensional Lie groups\\[4.2mm]
and {\boldmath$G$}-manifolds}\\[7mm]
{\bf Helge Gl\"{o}ckner
and Joachim Hilgert}\vspace{3mm}
\end{center}
\begin{abstract}
\hspace*{-5.9mm}We develop aspects of geometric control theory
on
Lie groups~$G$
which may be infinite dimensional,
and
on smooth $G$-manifolds~$M$
modelled on
locally convex
spaces.
As a tool, we discuss existence and uniqueness questions
for differential equations on~$M$ given by time-dependent
fundamental vector fields which are $\cL^1$ in time.
We then discuss the closures of reachable sets in~$M$
for controls in the Lie algebra~$\cg$ of~$G$,
or within a compact convex subset of~$\cg$.
Regularity properties of the Lie group~$G$
play an important role.\vspace{2mm}
\end{abstract}
{\bf Classification:}
22E65 (primary);
28B05,
34A12,
34H05,
%
46E30,
%
46E40.\\[2.3mm]
%
%
%
{\bf Key words:}
infinite-dimensional Lie group, Fr\'{e}chet-Lie group, regular Lie group,
measurable regularity, exponential function, Trotter formula,
local $\mu$-convexity,
geometric control theory, left-invariant vector field,
fundamental vector field, $G$-manifold, homogeneous space, reachable set,
staircase function,
compact set, extreme point, semigroup, bang-bang principle
\section{\!Introduction and statement of main results}
We lay some foundations for control
theory on smooth $G$-manifolds~$M$,
for~$G$ a Lie group modelled on a locally convex space.
As a starting point, we discuss
local existence and uniqueness
for Carath\'{e}odory solutions
to time-dependent fundamental vector fields on~$M$
with $\cL^1$-time dependence.
Assuming that $G$ is $L^1$-regular,
we obtain results concerning reachable
points.\\[2.3mm]
Ordinary differential equations
in Banach spaces are a classical topic
(see, e.g., \cite{Car},
\cite{Die}, \cite{Lan}).
Beyond normable spaces,
initial value problems
on locally convex spaces
need not have
solutions, and may possess
many solutions (see Example~6.1 and Example~6.2 in~\cite{Mi0},
also \cite[\S2.4]{GaN}).
But for special classes of equations,
specific results are available,
notably when Lie groups come into play.
If $G$ is a Lie group
modelled on a locally convex space,
then $G$ gives rise
to a smooth left action $G\times TG\to TG$,
$(g,v)\mto g.v$ on its tangent bundle
$TG$ via left translation,
$g.v:=T\lambda_g(v)$ with $\lambda_g\colon G\to G$,
$h\mto gh$.
Let $e\in G$ be the neutral element.
If $\gamma\colon [0,1]\to\cg$
is a continuous path in the Lie algebra
$\cg:=T_eG$ of~$G$,
then the initial value problem
\[
\dot{y}(t)=y(t).\gamma(t),\quad y(0)=e
\]
has at most one $C^1$-solution $\eta\colon [0,1]\to G$,
by \cite[Lemma~7.4]{Mil}.\footnote{Smoothness of $\gamma$
and sequential completeness of $\cg$ (which are assumed in loc.\,cit.)
are not used in the proof.
The fact also follows from
Lemmas~2.5.12 and 2.5.4 in~\cite{GaN}.}
If~$\eta$ exists,
it is called the \emph{evolution}
of~$\gamma$ and denoted by $\Evol(\gamma):=\eta$.
Let $k\in\N_0\cup\{\infty\}$.
If $\Evol(\gamma)$ exists for all
$\gamma\in C^k([0,1],\cg)$
and the map
\[
\evol\colon C^k([0,1],\cg)\to G,\quad
\gamma\mto\Evol(\gamma)(1)
\]
is smooth, then~$G$ is called \emph{$C^k$-regular}
(see~\cite{SEM}). The $C^\infty$-regular Lie groups
are also called \emph{regular};
every $C^k$-regular Lie group is regular.
Regularity is a central concept in infinite-dimensional
Lie theory; see \cite{Mil,KM0,KaM,SEM,GaN}, and \cite{Han}
for further information, notably the survey \cite{Nee}.\\[2.3mm]
If $M$ is a smooth manifold modelled
on a locally convex space,
endowed with a smooth right $G$-action
$\sigma\colon M\times G\to M$,
then each $v\in \cg$ determines a smooth vector~field
\[
v_\sharp\colon M\to TM,\;\, x\mto (T_e\sigma(x,\cdot))(v)
\]
on~$M$, the \emph{fundamental vector field} associated with~$v$.
If a continuous path $\gamma\colon [0,1]\to\cg$
admits a $C^1$-evolution $\eta\colon [0,1]\to G$,
it is known that the differential equation
\begin{equation}\label{theodeGmfd}
\dot{y}(t)=\gamma(t)_\sharp(y(t))
\end{equation}
on~$M$ satisfies local existence and uniqueness
of $C^1$-solutions, since
\[
\Fl\colon [0,1]\times [0,1]\times M\to M,\;\,
(t,t_0,y_0)\mto \sigma(y_0,\eta(t_0)^{-1}\eta(t))
\]
is a globally defined $C^1$-flow for~(\ref{theodeGmfd}),
see \cite[Lemma~2.5.13]{GaN}.\\[2.3mm]
As our starting point,
we establish analogous existence and uniqueness
results for Carath\'{e}odory
solutions, when fundamental vector fields
are $\cL^1$ in time.
For simplicity of the formulation,
we restrict attention to Lie groups
and manifolds modelled
on Fr\'{e}chet spaces
in the remainder of the introduction,
and also in much of Sections~\ref{sec-prels} through \ref{sec-approachable}
and \ref{sec-Lq};
in Section~\ref{sec-beyond},
we explain the necessary changes allowing us
to extend the results to
Lie groups and manifolds
modelled on more general locally convex spaces.
(The Fr\'{e}chet property is also irrelevant in Sections~\ref{sec-han}
and \ref{sec-C0}).\\[2.3mm]
For real numbers $a<b$ and a Fr\'{e}chet
space~$E$, a function $\gamma\colon [a,b]\to E$
is called $\cL^1$ if it is measurable
with respect to the Borel $\sigma$-algebras
on domain and range,
the image $\gamma([a,b])$ contains a countable
dense subset and
\begin{equation}\label{theL1}
\|\gamma\|_{\cL^1,q}:=\int_{[a,b]}q(\gamma(t))\, d\lambda_1(t)<\infty
\end{equation}
for each continuous seminorm $q$ on~$E$
(where $\lambda_1$ denotes Lebesgue-Borel measure).
Let $N\sub\cL^1([a,b],E)$ be the set of $\cL^1$-functions
which vanish almost everywhere (with respect to
$\lambda_1$), and write $[\gamma]:=\gamma+N$
for $\gamma\in \cL^1([a,b],E)$.
Then $L^1([a,b],E):=\cL^1([a,b], E)/N$
is a Fr\'{e}chet space with respect
to the locally convex vector topology given
by the seminorms $\|\cdot\|_{L^1,q}$
defined via $\|[\gamma]\|_{L^1,q}:=\|\gamma\|_{\cL^1,q}$
(see \cite[Lemma~1.19]{MeR}).
Following~\cite[\S3]{MeR},
we say that a function $\eta\colon [a,b]\to E$
is \emph{absolutely continuous}\footnote{If $E$ is a Banach space,
copying the classical $\ve$-$\delta$-definition
of absolute continuity of scalar-valued functions (as in \cite[Definition~7.17]{Rud}),
one obtains a more general concept of absolutely
continuous functions which need not be absolutely
continuous in the above more limited sense
(see \cite{Boc}) unless~$E$
is sufficiently nice (e.g., a Hilbert space),
see \cite{Cla}.}
if there exists $\gamma\in \cL^1([a,b],E)$
such that
\begin{equation}\label{gammaprimi}
\eta(t)=\eta(a)+\int_a^t\gamma(s)\,ds
\quad\mbox{for all $t\in [a,b]$,}
\end{equation}
writing
$\int_a^t \gamma(s)\,ds$ for the weak
integral\footnote{The same notation will denote
weak integrals with respect to Lebesgue measure.}
$\int_{[a,t]}\gamma(s)\,d\lambda_1(s)\in E$.
Then
\[
\gamma(t)=\eta'(t)\mbox{ \,for almost all $\,t\in [a,b]$,}
\]
whence $[\gamma]\in L^1([a,b],E)$
is uniquely determined by~$\eta$ (see, for instance, \cite[Lemma~1.28]{MeR}).
We say that a function $\eta\colon I\to E$ on an interval $I\sub \R$
is \emph{absolutely continuous}
if $\eta|_{[a,b]}$ is so for all real numbers $a<b$ such that $[a,b]\sub I$.
If $W\sub \R\times E$ is a subset and $f\colon
W\to E$ a function, then a function
$\gamma\colon I\to E$ on a non-degenerate interval $I\sub\R$
is called a \emph{Carath\'{e}odory solution}
to the differential equation
\[
y'(t)=f(t,y(t))
\]
if $(t,\gamma(t))\in W$ for all $t\in I$,
the function $\gamma$ is absolutely continuous,
and
\begin{equation}\label{cara1}
\gamma'(t)=f(t,\gamma(t))\quad\mbox{for almost all $\,t\in I$.}
\end{equation}
If $(t_0,y_0)\in W$ is given and, moreover,
\begin{equation}\label{rest}
t_0\in I\quad\mbox{and}\quad
\gamma(t_0)=y_0,
\end{equation}
then $\gamma$ is called a Carath\'{e}odory solution
to the \emph{initial value problem}
\begin{equation}\label{the-ivp}
y'(t)=f(t,y(t)),\quad y(t_0)=y_0.
\end{equation}
If $\gamma\colon I \to E$ is absolutely
continuous, $(t,\gamma(t))\in U$ for all $t\in I$
and $t_0\in I $, then $\gamma$ is a Carath\'{e}odory
solution to~(\ref{the-ivp})
if and only if
\begin{equation}\label{cara2}
\gamma(t)=y_0+\int_{t_0}^tf(s,\gamma(s))\, ds\quad\mbox{for all $\,t\in I$.}
\end{equation}
For the special case of
Carath\'{e}odory solutions to differential equations
in Banach spaces, see also~\cite[Chapter~30]{Sch}.\\[2.3mm]
Since $C^1$-functions operate on
absolutely continuous functions,
we can speak about absolutely continuous functions
$\gamma\colon I \to M$ to a $C^1$-manifold~$M$
modelled on a Fr\'{e}chet space
(a continuous function which is absolutely continuous
in local charts). Likewise,
we can also speak about
Carath\'{e}odory solutions
to differential equations on~$M$
(see Sections~\ref{sec-prels} and \ref{sec-now-mfd} for details).\\[2.3mm]
Let $M$ be a $C^1$-manifold
modelled on a locally convex space, $J\sub\R$
be a non-degenerate interval and
\[
f\colon J\times M\to TM
\]
be a map such that $f(t,y)\in T_yM$ for all
$(t,y)\in J\times M$ (a \emph{time-dependent vector field}).
Or, more generally, consider
a function $f\colon W\to TM$
on an open subset $W\sub J\times M$
such that $f(t,y)\in T_yM$ for all $(t,y)\in W$.
It is known that the differential equation
\[
\dot{y}(t)\,=\, f(t,y(t))
\]
satisfies local existence and
uniqueness of $C^1$-solutions if it admits
local $C^1$-flows (see \cite[Lemma~2.5.10]{GaN};
cf.\ \cite[Appendix~A]{Eyn} and \cite[Appendix~A.4]{Wal}
for special cases).
Likewise, we have (with terminology as in Section~\ref{sec-now-mfd}):
\begin{prop}\label{thmA}
Let $M$ be a $C^1$-manifold modelled on a Fr\'{e}chet
space, $J\sub \R$ be a non-degenerate interval and
$f\colon W \to TM$
be a map on an open subset $W\sub J\times M$
such that $f(t,y)\in T_yM$ for all
$(t,y)\in W$.
If
\begin{equation}\label{the-ode-mfd}
\dot{y}(t)\,=\, f(t,y(t))
\end{equation}
admits local flows which are pullbacks of $C^1$-maps,
then {\rm(\ref{the-ode-mfd})}
satisfies local existence and
uniqueness of Carath\'{e}odory
solutions.
\end{prop}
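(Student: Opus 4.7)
The plan is to reduce to a chart in the modelling Fr\'echet space, exploit the pullback structure of the local flow to construct a Carath\'eodory solution explicitly, and then establish uniqueness by a chain-rule argument that transforms any putative solution into a constant path. I would begin by fixing $(t_0,y_0)\in W$ and choosing a chart around $y_0$ identifying a neighborhood of $y_0$ with an open subset of the modelling Fr\'echet space~$E$; since absolute continuity and the Carath\'eodory condition are local and chart-invariant notions (as set up in Section~\ref{sec-now-mfd}), I may assume $M$ itself is open in~$E$.

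For existence, let $\Phi(t,s,z)$ be a local flow near $(t_0,t_0,y_0)$ which by hypothesis is the pullback of a $C^1$-map: that is, $\Phi=\Psi\circ(\alpha,\beta,\id)$ for some $C^1$-map $\Psi$ and absolutely continuous coordinate functions $\alpha,\beta$, the prototype being $\Fl(t,t_0,y_0)=\sigma(y_0,\eta(t_0)^{-1}\eta(t))$ from the Lie group setting. Then $\gamma(t):=\Phi(t,t_0,y_0)$ is the composition of an absolutely continuous curve with a $C^1$-map, hence absolutely continuous in the Fr\'echet sense, and the defining property of a local flow yields $\gamma(t_0)=y_0$ and $\gamma'(t)=f(t,\gamma(t))$ for almost every~$t$, so $\gamma$ is a Carath\'eodory solution through $(t_0,y_0)$.

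For uniqueness, let $\gamma_1,\gamma_2$ be Carath\'eodory solutions on a common interval $I\ni t_0$ with $\gamma_i(t_0)=y_0$. Shrinking $I$ so that both graphs stay inside the domain of $\Phi$, I would consider
\[
\varphi_i(t)\;:=\;\Phi(t_0,t,\gamma_i(t)).
\]
If $\gamma_i$ is the flow orbit of $y_0$ then $\varphi_i\equiv y_0$, and the strategy is to prove directly that $\varphi_i$ is absolutely continuous with $\varphi_i'(t)=0$ almost everywhere, whence $\varphi_i$ is constant equal to $y_0$, forcing $\gamma_i(t)=\Phi(t,t_0,y_0)$ and in particular $\gamma_1=\gamma_2$. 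Writing $\varphi_i=\Psi\circ(\alpha(t_0),\beta(t),\gamma_i(t))$ and differentiating via the chain rule, the two nontrivial contributions combine through the flow identity $(\partial_s\Phi)(t_0,s,z)+D_3\Phi(t_0,s,z)\cdot f(s,z)=0$ together with $\gamma_i'(t)=f(t,\gamma_i(t))$ to give $\varphi_i'(t)=0$ at every point where $\beta$ and $\gamma_i$ are simultaneously differentiable, i.e.\ almost everywhere.

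The main obstacle will be the chain rule in the Fr\'echet setting: showing that the composition of the $C^1$-map $\Psi$ with the absolutely continuous curve $t\mapsto(\alpha(t_0),\beta(t),\gamma_i(t))$ is itself absolutely continuous, with the expected almost-everywhere derivative. In Fr\'echet spaces this is more delicate than in Banach spaces, because absolute continuity in the sense of (\ref{theL1})--(\ref{gammaprimi}) must be verified seminorm by seminorm and rests on local Lipschitz estimates for $\Psi$; these estimates do follow from $\Psi$ being $C^1$ on the relevant neighborhoods, via the standard mean-value estimate. Once the chain rule is in hand, the integral identity (\ref{gammaprimi}) converts $\varphi_i'=0$ a.e.\ into $\varphi_i\equiv y_0$, and both existence and uniqueness are complete.
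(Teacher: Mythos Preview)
Your proposal is correct and takes essentially the same approach as the paper: the paper packages the uniqueness argument (showing $\theta_j(t):=\Phi_{\bar t,t}(\gamma_j(t))$ is constant via the chain rule and the flow identity obtained by differentiating the cocycle relation $z=\Phi_{\bar t,t}(\Phi_{t,\bar t}(z))$) into Lemma~\ref{locflow-uniq}, and then proves Proposition~\ref{thmA} by transferring the manifold-level local flow to each chart and invoking that lemma. Your concern about the chain rule is addressed by the preliminary~\ref{C1actsAC} and is not the delicate step; the care is rather in ensuring the flow identity applies at the points $z=\theta_j(t)$, which requires shrinking to the neighbourhood~$Y$ from condition~(b) of the local-flow definition.
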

Let $G$ be a Lie group modelled on a Fr\'{e}chet
space with neutral element~$e$
and Lie algebra $\cg=T_eG$.
If $\gamma\colon [a,b]\to \cg$
is an $\cL^1$-map, then the initial value problem
\[
\dot{y}(t)=y(t).\gamma(t),\quad y(a)=e
\]
has at most one Carath\'{e}odory solution
$\eta\colon [a,b]\to G$;
as above, $\Evol([\gamma]):=\Evol(\gamma):=\eta\in AC([a,b],G)$
is called the \emph{evolution} of $\gamma$ (cf.\ \cite{MeR}).
Write
\[
\evol([\gamma]):=\evol(\gamma):=\Evol(\gamma)(b)\in G.
\]
If each $\gamma\in\cL^1([0,1],\cg)$ has an evolution
and
\[
\Evol\colon L^1([0,1],\cg)\to C([0,1],G)
\]
is smooth as a map to the Lie group
$C([0,1],G)$ of continuous $G$-valued maps
on $[0,1]$ (or, equivalently, to the Lie group
$AC([0,1],G)$ of absolutely continuous $G$-valued maps),
then $G$ is called \emph{$L^1$-regular} (see \cite{MeR}).\\[2.3mm]
Every Lie group~$G$ modelled on a Banach space is $L^1$-regular,
as well as $C^\ell(M,G)$ for each compact smooth manifold~$M$
and $\ell\in\N_0\cup\{\infty\}$ (as in~\cite{GCX})
and the Lie group
$\Diff(M)$ of all smooth diffeomorphisms of~$M$ (as in~\cite{Mic},
\cite{Ham} and \cite{Mil}),
see
Theorem~C, Proposition~7.11, and Theorem~C in \cite{MeR},
respectively.\\[2.3mm]
More generally, given $p\in [1,\infty[$ and a Fr\'{e}chet space~$E$,
we can define $\cL^p$-functions $\gamma\colon [a,b]\to E$
and a corresponding Fr\'{e}chet space $L^p([a,b],E)$,
replacing the requirement
(\ref{theL1}) with
\[
\|\gamma\|_{\cL^p,q}:=\sqrt[p]{\int_a^bq(\gamma(t))^p\,dt}<\infty
\]
for each continuous seminorm $q$ on~$E$;
if $p=\infty$, we require that the essential suprema
\[
\|\gamma\|_{\cL^\infty,q}:=\|q\circ\gamma\|_{\cL^\infty}
\]
be finite. In either case, we write $[\gamma]$
for the equivalence class in $L^p([a,b],E)$
and $\|[\gamma]\|_{L^p,q}:=\|\gamma\|_{\cL^p,q}$.
The inclusion $\cL^p([a,b],E)\sub \cL^1([a,b],E)$
induces an injective linear map $L^p([a,b],E)\to L^1([a,b],E)$,
which we use to identify $L^p([a,b],E)$
with a vector subspace of $L^1([a,b],E)$.
Unless the contrary is stated, $L^p([a,b],E)$
shall be endowed with the $L^p$-topology
(the locally convex vector topology given by the
seminorms $\|\cdot\|_{L^p,q}$).\\[2.3mm]
If $p$ is as before
and $G$ is a Lie group modelled on a Fr\'{e}chet space
such that $\Evol(\gamma)$ exists for each $\gamma\in \cL^p([0,1],\cg)$
and $\Evol\colon L^p([0,1],\cg)\to C([0,1],G)$
is smooth, then $G$ is called \emph{$L^p$-regular} (see \cite[\S5]{MeR}).\\[2.3mm]
We obtain the following existence and uniqueness
result for differential equations on $G$-manifolds
given by time-dependent fundamental vector fields.
\begin{thm}\label{thmB}
Let $G$ be a Lie group modelled on a Fr\'{e}chet
space, $M$ be a smooth manifold modelled
on a Fr\'{e}chet space and $\sigma\colon M\times G\to M$
be a smooth map which is a right $G$-action.
If $\gamma\in\cL^1([a,b],\cg)$ has an evolution
$\eta\in AC([a,b],G)$,
then the differential equation
\begin{equation}\label{nowode}
\dot{y}(t)=\gamma(t)_\sharp(y(t))
\end{equation}
on~$M$ satisfies local existence and uniqueness
of Carath\'{e}odory solutions.
The mapping
\begin{equation}\label{flo-form}
\Fl\colon [a,b]\times[a,b]\times M\to M,\quad
(t,t_0,y_0)\mto \sigma(y_0,\eta(t_0)^{-1}\eta(t))
\end{equation}
is the maximal flow of {\rm(\ref{nowode})}.
Moreover, {\rm(\ref{nowode})}
admits local flows which are pullbacks of
$C^\infty$-maps.
\end{thm}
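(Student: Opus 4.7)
The plan is to show directly that the explicit formula~(\ref{flo-form}) defines a global Carath\'{e}odory flow having the structural form required by Proposition~\ref{thmA}, and then to read off both local uniqueness and global maximality.

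First, for fixed $t_0\in[a,b]$ and $y_0\in M$, I would verify that the curve $t\mapsto \Fl(t,t_0,y_0)=\sigma(y_0,\eta(t_0)^{-1}\eta(t))$ is absolutely continuous: $\eta\in AC([a,b],G)$, left translation by $\eta(t_0)^{-1}$ is a diffeomorphism of~$G$, and smooth maps preserve absolute continuity in local charts. The defining property of the evolution gives $\eta'(t)=T_e\lambda_{\eta(t)}(\gamma(t))$ for almost every~$t$, so $g(t):=\eta(t_0)^{-1}\eta(t)$ satisfies $g'(t)=T_e\lambda_{g(t)}(\gamma(t))$ a.e. Writing $\sigma_{y_0}:=\sigma(y_0,\cdot)$ and using the right-action identity $\sigma_{y_0}\circ\lambda_{g(t)}=\sigma_{\sigma(y_0,g(t))}$, the chain rule yields, for almost every~$t$,
\[
\tfrac{d}{dt}\Fl(t,t_0,y_0)=T_{g(t)}\sigma_{y_0}(g'(t))=T_e\sigma_{\sigma(y_0,g(t))}(\gamma(t))=\gamma(t)_\sharp(\Fl(t,t_0,y_0)).
\]
Together with the initial value $\Fl(t_0,t_0,y_0)=y_0$, this identifies $t\mapsto\Fl(t,t_0,y_0)$ as a Carath\'{e}odory solution of~(\ref{nowode}).

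Second, the pullback structure is already visible in~(\ref{flo-form}). Set $\alpha\colon [a,b]\times[a,b]\to G$, $\alpha(t,t_0):=\eta(t_0)^{-1}\eta(t)$; continuity of~$\alpha$ follows from continuity of~$\eta$ together with smoothness of group multiplication and inversion. With the smooth map $F\colon M\times G\to M$, $(y,g)\mto \sigma(y,g)$, one has $\Fl(t,t_0,y_0)=F(y_0,\alpha(t,t_0))$, exhibiting $\Fl$ globally as the composition of the smooth map~$F$ with a continuous auxiliary map; in particular this yields local flows of the pullback form required in Proposition~\ref{thmA}.

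Proposition~\ref{thmA} then delivers local existence and uniqueness of Carath\'{e}odory solutions for~(\ref{nowode}). Combined with the globally defined Carath\'{e}odory flow already produced, uniqueness forces~$\Fl$ to be the maximal flow, proving all three assertions. The main obstacle I anticipate is executing the chain-rule step cleanly for absolutely continuous curves in the Fr\'{e}chet setting and confirming that the explicit factorisation above matches the precise notion of ``local flows which are pullbacks of $C^\infty$-maps'' formulated in Section~\ref{sec-now-mfd}.
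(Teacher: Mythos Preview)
Your approach is essentially the paper's, but the factorisation you write down does not match Definition~\ref{def-loc-flo-mfd}. That definition requires the local flow to have the form $\Phi(t,t_0,y_0)=\Psi(\zeta_1(t),\zeta_2(t_0),y_0)$ with \emph{separate} absolutely continuous functions $\zeta_1$ and $\zeta_2$; your map $\alpha(t,t_0)=\eta(t_0)^{-1}\eta(t)$ mixes the two variables, so $F(y_0,\alpha(t,t_0))$ is not yet in the required shape. The remedy is immediate: take $N_1=N_2=G$, $\zeta_1=\zeta_2=\eta$, and $\Psi\colon G\times G\times M\to M$, $\Psi(g,h,y)=\sigma(y,h^{-1}g)$. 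This is exactly the smooth map you need, and then $\Fl(t,t_0,y_0)=\Psi(\eta(t),\eta(t_0),y_0)$.

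Two further points you should address explicitly. First, Definition~\ref{def-loc-flo-mfd}\,(c) asks for a single Borel set $I_0$ (independent of $t_0$ and $y_0$) on which $\frac{d}{dt}\Phi_{t,t_0}(y_0)=f(t,\Phi_{t,t_0}(y_0))$ for \emph{all} $(t_0,y_0)$; your phrasing ``for almost every~$t$'' leaves open whether the null set depends on $(t_0,y_0)$. In fact it does not: take $I_0$ to be the set where $\dot\eta(t)$ exists and equals $\eta(t).\gamma(t)$, and then your chain-rule computation goes through for every $(t_0,y_0)$ simultaneously. Second, you have not verified condition~(b), the composition law $\Phi_{t_2,t_1}\circ\Phi_{t_1,t_0}=\Phi_{t_2,t_0}$, which is what drives the uniqueness argument in Lemma~\ref{locflow-uniq} underlying Proposition~\ref{thmA}. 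Here it follows from the action axiom: $\sigma\bigl(\sigma(y_0,\eta(t_0)^{-1}\eta(t_1)),\eta(t_1)^{-1}\eta(t_2)\bigr)=\sigma(y_0,\eta(t_0)^{-1}\eta(t_2))$. With these corrections your argument coincides with the paper's.
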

In fact, we shall see that $\Fl$ is such a pullback.
\begin{rem}
Consider a smooth right $G$-action $M\times G\to M$
as before, and $T>0$. As a special case of Theorem~\ref{thmB},
we know that if $\Evol(\gamma)$ exists for an $\cL^1$-map
$\gamma\colon [0,T]\to\cg$, then for $x_0\in M$ the
initial value problem
\[
\dot{y}(t)=\gamma(t)_\sharp(y(t)),\quad y(0)=x_0
\]
has a unique solution on $[0,T]$,
given by $t\mto x_0.\Evol(\gamma)(t)$. The endpoint of this integral
curve, for $t=T$, is
\[
x_0.\evol(\gamma).
\]
\end{rem}
We shall use the following elementary concepts.
\\[2.3mm]
Let $X$ be a set. A function
$\gamma\colon [a,b]\to X$
is called a \emph{staircase function}
if there exist $n\in\N$ and real numbers
\[
a=t_0<t_1<\cdots< t_n=b
\]
such that $\gamma|_{]t_{j-1},t_j[}$ is constant
for all $j\in\{1,\ldots, n\}$.
If $X$ is a topological space,
then a function
$\gamma\colon [a,b]\to X$
is called \emph{piecewise continuous}
if there exist $n\in\N$ and real numbers
\[
a=t_0<t_1<\cdots< t_n=b
\]
such that $\gamma|_{]t_{j-1},t_j[}$
has a continuous extension $[t_{j-1},t_j]\to X$
for all $j\in\{1,\ldots, n\}$.\\[2.3mm]
We write $\im(f):=f(X)$ for the image of a function
$f\colon X\to Y$.\\[2.3mm]
See \cite{AaS}, \cite{HHL}, \cite{Jur},
\cite{JaS}, \cite{Sac}, and the references
therein for geometric control theory
in finite dimensions,
and \cite{Son} for general
aspects of control theory.
Our results concerning control theory subsume the following.
\begin{thm}\label{thmC}
Let $G$ be a Lie group modelled
on a Fr\'{e}chet space, with Lie algebra $\cg$.
Let $S\sub\cg$ be a non-empty subset,
$M$ be a smooth manifold
modelled on a Fr\'{e}chet space,
and $\sigma\colon M\times G\to M$, $(x,g)\mto x.g$ be a smooth map
which is a right $G$-action. Let $x_0, y_0\in M$,
$T\in\; ]0,\infty[$, and $U\sub M$ be an open neighbourhood
of~$y_0$. Let $p\in [1,\infty[$
or $p=\infty$. If $G$ is $L^1$-regular,
then the following
conditions are equivalent.
\begin{itemize}
\item[\rm(a)]
There exists $\gamma\in \cL^1([0,T],\cg)$
with $\im(\gamma)\sub S$ such that $x_0.\evol(\gamma)\in U$.
\item[\rm(b)]
There exists $\gamma\in \cL^p([0,T],\cg)$
with $\im(\gamma)\sub S$ such that $x_0.\evol(\gamma)\in U$.
\item[\rm(c)]
There exists a piecewise continuous function
$\gamma\colon [0,T]\to\cg$
with $\im(\gamma)\sub S$ such that $x_0.\evol(\gamma)\in U$.
\item[\rm(d)]
There exists a staircase function $\gamma\colon [0,T]\to\cg$
with $\im(\gamma)\sub S$ such that $x_0.\evol(\gamma)\in U$.
\end{itemize}
If $S$ is convex, then the following condition~{\rm(e)} is equivalent
to {\rm(d):}
\begin{itemize}
\item[\rm(e)]
There exists a continuous function $\gamma\colon [0,T]\to \cg$
with $\im(\gamma)\sub S$ such that $x_0.\evol(\gamma)\in U$.
\end{itemize}
If $S$ is convex and the convex hull of the set $\ex(S)$
of extreme points is dense in $S$ $($e.g.,
if $S$ is compact and convex$)$,
then the following condition~{\rm(f)} is equivalent to {\rm(d):}
\begin{itemize}
\item[\rm(f)]
There exists a staircase function $\gamma\colon [0,T]\to\cg$
with $\im(\gamma)\sub \ex(S)$ such that $x_0.\evol(\gamma)\in U$.
\end{itemize}
\end{thm}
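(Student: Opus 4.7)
The plan is to separate the implications into easy and hard directions. The trivial implications are (d)$\impl$(c)$\impl$(b)$\impl$(a), as well as (e)$\impl$(b) and (f)$\impl$(d): a staircase function is piecewise continuous; any piecewise continuous function on the compact interval $[0,T]$ is bounded and measurable, hence in every $\cL^p$; and $\cL^p\sub\cL^1$. The real content therefore lies in (a)$\impl$(d), then (d)$\impl$(e) under the convexity hypothesis, and (d)$\impl$(f) under the extreme-points hypothesis.

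For (a)$\impl$(d), the key continuity assertion comes from $L^1$-regularity of $G$: $\evol\colon L^1([0,T],\cg)\to G$ is smooth (after rescaling to $[0,1]$), so composing with $g\mto\sigma(x_0,g)$ gives a continuous map
\[
\Phi\colon L^1([0,T],\cg)\to M,\quad [\gamma]\mto x_0.\evol(\gamma).
\]
Hence $V:=\Phi^{-1}(U)$ is open in $L^1([0,T],\cg)$, and it suffices to show that the classes $[\tilde\gamma]$ of staircase functions $\tilde\gamma\colon[0,T]\to S$ are $L^1$-dense in $\{[\gamma]:\gamma\in\cL^1([0,T],\cg),\,\im(\gamma)\sub S\}$. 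This is a measure-theoretic approximation: given $\gamma$ with $\im(\gamma)\sub S$, the image admits a countable dense subset $\{s_1,s_2,\ldots\}\sub\gamma([0,T])\sub S$; for any continuous seminorm $q$ and $\ve>0$, finitely many of the sets $\gamma^{-1}\bigl(\{v:q(v-s_n)<\ve\}\bigr)$ cover $[0,T]$ up to a set whose $\lambda_1$-measure can be made as small as desired, so by disjointification and assigning the value $s_n$ on each piece one produces a simple function $\gamma_1$ valued in $\{s_1,\ldots,s_N\}\sub S$ with $\|\gamma-\gamma_1\|_{L^1,q}$ small (the residual set on which $\gamma_1$ disagrees contributes little because $\gamma$ is $\cL^1$). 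Standard $L^1$-approximation of measurable sets by finite unions of intervals then yields a staircase function $\tilde\gamma$ valued in the same finite subset of $S$ and close to $\gamma_1$, hence to $\gamma$, in $L^1$-norm.

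For (d)$\impl$(e) when $S$ is convex, given a staircase $\tilde\gamma$ with breakpoints $0=t_0<\cdots<t_n=T$ and values $v_j\in S$, I round the jumps: on a short interval of length $\delta$ around each interior $t_j$, replace $\tilde\gamma$ by the linear interpolation between $v_j$ and $v_{j+1}$, which lies in $S$ by convexity. The resulting continuous $\gamma_\delta\colon[0,T]\to S$ tends to $\tilde\gamma$ in $L^1$-norm as $\delta\to 0$, and $\Phi(\gamma_\delta)\to\Phi(\tilde\gamma)\in U$ by continuity of $\Phi$, so $\Phi(\gamma_\delta)\in U$ for small $\delta$.

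The main obstacle is (d)$\impl$(f) under density of $\conv(\ex(S))$ in $S$: this is a genuine bang--bang principle. On each constancy interval $]t_{j-1},t_j[$ of the staircase, I would approximate the value $v_j\in S\sub\overline{\conv(\ex(S))}$ by a finite convex combination $\sum_k\alpha_{j,k}e_{j,k}$ with $e_{j,k}\in\ex(S)$, and then chatter: subdivide the interval into pieces of relative length $\alpha_{j,k}$ on which the new control equals $e_{j,k}$, and iterate by subdivision to get a sequence of $\ex(S)$-valued staircases whose time-averages match $v_j$ with increasing accuracy. Such chattering controls do \emph{not} converge to $\tilde\gamma$ in the $L^1$-norm of $\cg$; the nontrivial step is to show that the corresponding evolutions still converge to $\evol(\tilde\gamma)$. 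The $L^1$-continuity of $\Phi$ established above is therefore not enough, and one must invoke the bang--bang/chattering results developed elsewhere in the paper (as signalled by the keyword list), whose proof rests on weaker-than-norm convergence properties of $\evol$ together with $L^1$-regularity; this, rather than the density arguments above, is where I expect the technical weight of the theorem to lie.
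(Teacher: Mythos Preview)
Your treatment of the easy implications and of (a)$\impl$(d) and (d)$\impl$(e) is correct and matches the paper's argument essentially line for line: the paper packages the approximation of an $\cL^1$-function by staircase functions with values in its image as Lemma~\ref{approx-stair}(a), and the linear-interpolation smoothing you describe is exactly Lemma~\ref{approx-stair}(b). (The paper actually proves (a)$\impl$(e) directly rather than passing through~(d), but your route via (d) is just as good.)

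For (d)$\impl$(f), your chattering construction is precisely the right idea, and you are right that the chattering controls do not converge in $L^1$; but your diagnosis of the missing ingredient is off. The paper does \emph{not} appeal to any weak-convergence property of $\evol$. Instead, observe that for a staircase control, $\evol$ is simply a finite product of exponentials (Remark~\ref{evolstaircase}): on a constancy interval of length $\tau$ with value $w=\sum_k \alpha_k e_k$, the evolution contributes the factor $\exp_G(\tau w)$, while the $n$-fold chattering control contributes
\[
\bigl(\exp_G(\tfrac{\tau\alpha_1}{n}e_1)\cdots\exp_G(\tfrac{\tau\alpha_m}{n}e_m)\bigr)^n.
\]
The convergence of the latter to $\exp_G(\tau w)=\exp_G(\tau\alpha_1 e_1+\cdots+\tau\alpha_m e_m)$ is exactly the Trotter product formula, which holds in every $L^1$-regular (indeed every $C^0$-regular) Lie group. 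The paper isolates this step as Lemma~\ref{Trotter-use} and then assembles the pieces over the finitely many constancy intervals. So the ``technical weight'' you anticipated is the Trotter formula, a statement purely about $\exp_G$ in~$G$, not a subtle continuity property of $\evol$ on a function space; once you name it, the implication (d)$\impl$(f) becomes short.
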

\begin{rem}
Note that (f) is an instance of a bang-bang principle:
If we can enter~$U$ in time~$T$ using controls
in~$S$, then also using piecewise constant controls in the set
$\ex(S)$ of extreme points.
To prove~(f), we shall use
the Trotter formula,
which is valid for all $L^1$-regular Lie groups, by~\cite[Theorem~I]{MeR}
(cf.\ \cite{Tro} for further generalizations).
\end{rem}
\begin{rem}
With regard to~(f), we recall that the convex hull $\conv(\ex(S))$ is dense in~$S$
for each weakly
compact convex subset $S$ of a locally convex space~$E$,
i.e., a convex subset which is compact in the weak
topology $\cO_w$ on~$E$ (which is initial with respect to
the set~$E'$ of continuous linear functionals on~$E$).\footnote{By
the Krein-Milman Theorem, $\conv(\ex(S))$ has closure~$S$
in $(E,\cO_w)$. The closure $C$ of $\conv(\ex(S))$ in~$E$
satisfies $C\sub S$, as $\cO_w$ is coarser than the given topology on~$E$.
Being closed and convex, $C$ is an intersection of closed half-spaces $H\sub E$,
by the Hahn-Banach Separation Theorem.
As each $H$ is weakly closed, $C$ is closed
in $(E,\cO_w)$ and so~$C=S$.}
For example, $\conv(\ex(S))$ is dense in~$S$
for the closed unit ball~$S$ in any reflexive Banach space~$E$
(e.g., in a Hilbert space); likewise for every closed, convex, bounded subset
$S\sub E$.
\end{rem}
\begin{rem}\label{evolstaircase}
Recall that every $L^1$-regular Lie group~$G$
has an exponential function $\exp_G\colon \cg\to G$.
If $v\in\cg$ is given and real numbers $\alpha<\beta$,
then the constant function $\gamma\colon [\alpha,\beta]\to\cg$, $t\mto v$
satisfies
\[
\evol(\gamma)=\exp_G((\beta-\alpha)v).
\]
If $T>0$ and $\gamma\colon [0,T]\to \cg$ is a staircase
function, let $0=t_0<\cdots<t_n=T$ such that
$\gamma$ has a constant value $v_j$ on $]t_{j-1},t_j[$
for $j\in\{1,\ldots, n\}$. Then
\[
\evol(\gamma)=\exp_G((t_1-t_0)v_1)\exp_G((t_2-t_1)v_2)\cdots\exp_G((t_n-t_{n-1})v_n).
\]
\end{rem}
\begin{rem}
We can interpret Theorem~\ref{thmC}
as a result concerning the closures of reachable
sets. E.g., given a subset $S\sub\cg$ and $x_0\in M$, let
\[
\Reach_S(x_0)
\]
be the set of all $y_0\in M$ such that $y_0=x_0$ or
$y_0=x_0.\evol(\gamma)$ for some
$T>0$ and $\gamma\in \cL^1([0,T],\cg)$ with $\im(\gamma)\sub S$.
Using Remark~\ref{evolstaircase},
we deduce from Theorem~\ref{thmC}\,(d) that
\[
\wb{\Reach_S(x_0)}=\wb{x_0.\langle\exp_G([0,\infty[\,S)\rangle_+},
\]
where $\langle Y \rangle_+$ denotes the subsemigroup
of~$G$ generated by a subset $Y \sub G$.
\end{rem}
For some conclusions, weaker regularity properties
(like $L^\infty$-regularity) are sufficient.
To enable these variants, we discuss
continuity of the evolution map
$\Evol\colon L^p([0,1],\cg)\to C([0,1],G)$
with respect to the $L^1$-topology on its domain,
given by the seminorms
\[
L^p([0,1],\cg)\to[0,\infty[,\quad
[\gamma]\mto\int_0^1q(\gamma(t))\,dt
\]
for continuous seminorms~$q$ on~$\cg$.
\begin{thm}\label{thmD}
Let $p\in [1,\infty[$ or $p=\infty$.
Then
\[
\Evol\colon L^p([0,1],\cg)\to C([0,1],G)
\]
is continuous with respect to the $L^1$-topology
on $L^p([0,1],\cg)$, for each $L^p$-regular Lie group~$G$
modelled on a Fr\'{e}chet space.
\end{thm}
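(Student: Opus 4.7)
The plan is to reduce continuity at an arbitrary path $\gamma_0\in L^p([0,1],\cg)$ to continuity at the zero path, and then to obtain the latter by a Gronwall-type estimate in a chart of~$G$ at the identity. Throughout, $L^p$-regularity guarantees that $\Evol(\gamma)$ is defined and smooth in the $L^p$-topology, providing the starting point for the manipulations.

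For the reduction, suppose $\gamma_n\to\gamma_0$ in the $L^1$-topology, with all paths in $L^p$. Set $\eta_0:=\Evol(\gamma_0)$ and $\xi_n(t):=\eta_0(t)^{-1}\Evol(\gamma_n)(t)$. A direct chain-rule computation starting from the identity $\dot\eta=\eta\cdot\gamma$ shows that $\xi_n=\Evol(\sigma_n)$ with
\[
\sigma_n(t):=\gamma_n(t)-\Ad(\xi_n(t)^{-1})\gamma_0(t).
\]
Provided $\xi_n$ stays uniformly close to~$e$, continuity of $\Ad$ together with $\cL^1$-integrability of $\gamma_0$ force $\sigma_n\to 0$ in the $L^1$-topology. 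A bootstrap on the set of times~$t$ where $\xi_n$ stays in a controlled neighbourhood of~$e$ closes the loop, once continuity of $\Evol$ at the zero path in the $L^1$-topology is established.

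For continuity at~$0$, I would fix a chart $\phi\colon U\to V\sub\cg$ of~$G$ near~$e$ with $\phi(e)=0$. The left-invariant ODE $\dot y=y\cdot\gamma$ pulls back to $\dot z=F(z,\gamma(t))$ with $F$ smooth and $F(0,v)=v$, hence $F(z,v)=v+R(z,v)$ with $R(0,\cdot)=0$. On a suitable zero-neighbourhood, for each continuous seminorm $q$ on~$\cg$ there is a seminorm $q'$ and a constant~$C$ such that $q(R(z,v))\le C\,q(z)\,q'(v)$. Integrating and applying a Fr\'{e}chet-space Gronwall inequality yields
\[
q(z(t))\le q\Bigl(\int_0^t\gamma(s)\,ds\Bigr)\cdot\exp\bigl(C\,\|\gamma\|_{\cL^1,q'}\bigr),
\]
so $\gamma_n\to 0$ in the $L^1$-topology forces $z_n\to 0$ uniformly, i.e., $\Evol(\gamma_n)\to e$ in $C([0,1],G)$.

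The main obstacle is that $L^1$-convergence on $L^p$ does not imply $L^p$-convergence (sequences can concentrate mass while keeping their $L^p$-norm bounded away from zero), so the $L^p$-smoothness of $\Evol$ supplied by regularity cannot be invoked directly; one must instead exploit the specific left-invariant structure of the ODE through the chart-local Gronwall estimate. Subsidiary technicalities include patching the chart-local estimate uniformly over $[0,1]$, choosing the seminorms $q'$ coherently, and making the self-referential bootstrap in the reduction step rigorous — standard but non-trivial in the Fr\'{e}chet setting.
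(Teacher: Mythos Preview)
Your chart-local step has a genuine gap. The estimate $q(R(z,v))\le C\,q(z)\,q'(v)$ is not available in a general Fr\'echet space: smoothness of $R$ together with $R(0,\cdot)=0$ only yields $q(R(z,v))\le C\,q''(z)\,q'(v)$ for some \emph{possibly stronger} seminorm $q''$, so the Gronwall inequality does not close---iterating produces an infinite ascending chain $q,q'',q'''',\ldots$ of seminorms. There is no ``Fr\'echet-space Gronwall inequality'' in this generality. Equally serious is what you call a subsidiary technicality: keeping $z(s)$ inside the chart neighbourhood on which any such estimate could hold. You only know $\Evol(\gamma)$ stays near $e$ when $\gamma$ is small in $L^p$, not in $L^1$; assuming chart-confinement from $L^1$-smallness alone is precisely the $L^1$-continuity you are trying to prove.

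The paper's proof hinges on two ingredients you do not invoke. First, by a theorem of Hanusch, every $C^0$-regular (hence every $L^p$-regular) Lie group is \emph{locally $\mu$-convex}: there is a chart $\phi$ at $e$ and, for each seminorm $q$, a seminorm $p$ such that $q(\phi(g_1\cdots g_n))\le\sum_j p(\phi(g_j))$ whenever the right-hand side is $<1$. Second, the \emph{subdivision property} of $L^p$: any $\gamma\in L^p$ can be cut into rescaled pieces $\gamma_{n,k}(t)=\tfrac1n\gamma((k+t)/n)$ which, for $n$ large, all lie in a fixed $L^p$-neighbourhood $W$ of~$0$. On each piece $L^p$-regularity confines $\Evol(\gamma_{n,k})$ to the chart, and a direct one-step estimate (no Gronwall) gives $p(\phi(\Evol(\gamma_{n,k})(t)))\le\|\gamma_{n,k}\|_{L^1,P}$; since $\sum_k\|\gamma_{n,k}\|_{L^1,P}=\|\gamma\|_{L^1,P}$, local $\mu$-convexity then controls the product $\Evol(\gamma)$. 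For the reduction to $\gamma_0=0$, your bootstrap is circular as stated (showing $\sigma_n\to 0$ in $L^1$ already requires $\xi_n\to e$); the paper instead uses that right translation $[\gamma]\mapsto[\gamma]\odot[\eta]$ in the group $(L^p([0,1],\cg),\odot)$ is an affine map, manifestly continuous in the $L^1$-topology for each fixed~$[\eta]$.
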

It was already shown in \cite[Lemma~14.9]{SEM}
that $\evol\colon C([0,1],\cg)\to G$
is continuous with respect to the $L^1$-topology
for each $C^0$-regular, locally
$\mu$-convex Lie group~$G$,
and we can adapt the proof
(the concept of local $\mu$-convexity,
which goes back to~\cite{SEM},
is recalled in Definition~\ref{locmu}).
In the meantime, work by
Hanusch showed that every
$C^0$-regular Lie group is locally $\mu$-convex
(cf.\ Theorem~1 in \cite[\S5]{Han}).
Since $L^p$-regularity implies $C^0$-regularity
(see \cite[Corollary 5.21]{MeR}),
we can exploit that~$G$ in Theorem~\ref{thmD}
is locally $\mu$-convex.\\[2.3mm]
We mention that more can be shown:
The evolution map in Theorem~\ref{thmD}
is~$C^\infty$ with respect to
the $L^1$-topology (see \cite[Remark 4.3]{Ana}).\\[2.3mm]
Using Theorem~\ref{thmD}
as a tool, we can generalize Theorem~\ref{thmC}
as follows:
\begin{thm}\label{thmE}
Instead of requiring $L^1$-regularity,
let $G$ be a Lie group modelled
on a Fr\'{e}chet space
such that $G$ is $L^q$-regular
for some $q\in \;]1,\infty[$ or $q=\infty$.
Then all conclusions of Theorem~{\rm\ref{thmC}}
remain valid if we assume $p\geq q$
and replace~{\rm(a)} with
\begin{itemize}
\item[\rm(a)$'$]
There exists $\gamma\in \cL^q([0,T],\cg)$
with $\im(\gamma)\sub S$ such that $x_0.\evol(\gamma)\in U$.
\end{itemize}
\end{thm}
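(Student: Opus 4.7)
Our strategy is to revisit the proof of Theorem~\ref{thmC}, where $L^1$-regularity was used in two ways: to ensure that $\Evol(\gamma)$ exists for arbitrary $\gamma \in \cL^1$, and to provide $L^1$-continuity of the evolution map, through which approximations of controls pass to the endpoint map $\evol$. Under $L^q$-regularity, the first use is replaced by the observation that $\Evol$ is defined on $\cL^q$, which contains $\cL^p$ for $p \geq q$ on the bounded interval $[0,T]$ as well as $\cL^\infty$, and hence every staircase, piecewise continuous, or continuous function; the second use is supplied by Theorem~\ref{thmD}, which makes $\Evol\colon L^p([0,1],\cg) \to C([0,1],G)$ continuous in the $L^1$-topology. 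With these substitutions in hand, the trivial inclusions (staircase $\Rightarrow$ piecewise continuous $\Rightarrow \cL^\infty \Rightarrow \cL^p \Rightarrow \cL^q$, continuous $\Rightarrow$ piecewise continuous, and $\ex(S) \sub S$) yield (f)$\Rightarrow$(d)$\Rightarrow$(c), (e)$\Rightarrow$(c), and (c)$\Rightarrow$(b)$\Rightarrow$(a)$'$ immediately.

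The key implication is (a)$' \Rightarrow$ (d). Given $\gamma \in \cL^q([0,T],\cg)$ with $\im(\gamma) \sub S$ and $x_0.\evol(\gamma) \in U$, we rescale to $[0,1]$ and approximate $\gamma$ in the $L^1$-topology by staircase functions $\gamma_n$ with $\im(\gamma_n) \sub S$. Since $\gamma$ has separable image (by the $\cL^1$-measurability hypothesis) and is Borel measurable, simple functions with values in $\im(\gamma)$ are $L^1$-dense; replacing their measurable level sets by finite unions of subintervals produces staircase approximants with values still in $S$. By Theorem~\ref{thmD}, $\Evol(\gamma_n) \to \Evol(\gamma)$ uniformly, so continuity of the $G$-action gives $x_0.\evol(\gamma_n) \to x_0.\evol(\gamma) \in U$, and (d) follows for large~$n$ after rescaling back.

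For (d)$\Rightarrow$(e) when $S$ is convex, we smooth a staircase by linear interpolation on small neighbourhoods of its jumps: the result is continuous, still $S$-valued by convexity, and $L^1$-close to the staircase, so Theorem~\ref{thmD} preserves membership in~$U$. For (d)$\Rightarrow$(f) when $\conv(\ex(S))$ is dense in~$S$, we mimic the proof in Theorem~\ref{thmC}, which rested on the Trotter product formula; this remains available since $L^q$-regularity implies $C^0$-regularity by \cite[Corollary~5.21]{MeR}. Given an $S$-valued staircase $\gamma$ with $\evol(\gamma) = \exp_G(s_1 v_1) \cdots \exp_G(s_n v_n)$ (Remark~\ref{evolstaircase}), we approximate each $v_j$ by a convex combination $\sum_k \lambda_{k,j} w_{k,j}$ with $w_{k,j} \in \ex(S)$, and apply the Trotter formula to each factor $\exp_G(s_j v_j)$ to yield a staircase function valued in $\ex(S)$ whose evolution lies arbitrarily close to $x_0.\evol(\gamma) \in U$.

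The main obstacle throughout is preserving the image constraints $\im(\gamma_n) \sub S$ and $\im(\gamma_n) \sub \ex(S)$ under each approximation, since neither $S$ nor $\ex(S)$ is assumed closed. This is handled by building the approximants pointwise from values already in the target set, never by topological closure arguments, while keeping their $L^1$-seminorm differences small so that Theorem~\ref{thmD} transports the approximation from controls to evolutions. Beyond this, the proof simply recapitulates that of Theorem~\ref{thmC} with the pair (Theorem~\ref{thmD}, $L^q$-regularity) standing in for $L^1$-regularity.
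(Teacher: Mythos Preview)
Your proposal is correct and follows essentially the same approach as the paper's own proof: the trivial inclusions handle the easy implications, Theorem~\ref{thmD} supplies $L^1$-continuity of $\evol$ on $\cL^q$ for the key implication (a)$'\Rightarrow$(d) via the staircase approximation of Lemma~\ref{approx-stair}(a), linear interpolation handles the convex case, and Lemma~\ref{Trotter-use} (Trotter) handles (d)$\Rightarrow$(f). The only cosmetic difference is that the paper proves (a)$'\Rightarrow$(e) directly via Lemma~\ref{approx-stair}(b) rather than going through (d) first, and it packages the perturbation step in (d)$\Rightarrow$(f) via the linear map $\phi\colon\cg^\ell\to\cL^q([0,T],\cg)$ with the $\cL^1$-topology rather than invoking continuity of $\exp_G$ on each factor; both routes are equivalent.
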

A variant of Theorem~\ref{thmC} is also available
if a Lie group~$G$ is only assumed to be $C^0$-regular
(see Theorem~\ref{thmF}).
\begin{rem}
Using Lusin measurability instead of Borel
measurability, it is possible to define
$\cL^p$-maps to sequentially
complete locally convex spaces,
corresponding absolutely continuous maps,
and $L^p$-regularity
(see \cite{Nik}).
Using these, we
find that Proposition~\ref{thmA}
remains valid if $M$ is a $C^2$-manifold
modelled on a sequentially complete
locally convex space and (\ref{the-ode-mfd})
admits local flows which are pullbacks of~$C^2$-maps
(see Remark~\ref{thmAgen}\,(a));
Theorems~\ref{thmB}, \ref{thmC},
and~\ref{thmE}
remain valid if $G$ is a Lie group
modelled on a sequentially complete
locally convex space and~$M$ a smooth
manifold modelled
on a sequentially complete locally convex space, with $L^1$-regularity (and $L^q$-regularity)
as in~\cite{Nik}
(see Section~\ref{sec-beyond},
notably Theorem~\ref{thmBgen},
Remark~\ref{givesC}, and Remark~\ref{givesDE}).
Moreover, Theorem~\ref{thmD} remains
valid for Lie groups modelled
on sequentially complete locally convex spaces
and $L^p$-regularity as in \cite{Nik}
(see Remark~\ref{givesDE}).
Generalizations to $\cE$-regular Lie groups
modelled on sequentially complete (FEP)-spaces
or integral complete locally convex spaces
(as in \cite{MeR}) are also possible,
see Remark~\ref{furth-var} (for the terminology,
cf.\ also Section~\ref{sec-han}).
\end{rem}
Absolutely continuous functions
$\eta\colon [a,b]\to E$
as in (\ref{gammaprimi}) to a sequentially
complete locally convex space
are more difficult to treat than those
to Fr\'{e}chet spaces,
as $\eta'(t)$ may
not exist (and recover $\gamma(t)$)
almost everywhere in this case.
Notably, Carath\'{e}odory
solutions need to be understood in the
sense of~(\ref{cara2}),
while~(\ref{cara1}) might not hold
almost everywhere.
At least, for each
continuous linear map $q\colon E\to F$
to a Banach space~$F$, we still have that
$(q\circ \eta)'(t)$ exists and equals
$q(\gamma(t))$ for almost all~$t$.
This will be good enough for the proofs,
exploiting that $q\circ f$ for an $E$-valued
$C^2$-function $f$ locally factors over a $C^1$-map on an open
subset of a Banach space (see Appendix~\ref{appA}).
\begin{rem}
Examples of $L^1$-regular Lie groups
modelled on sequentially complete
locally convex spaces are direct
limits $G=\bigcup_{n\in\N} G_n$
of finite-dimensional Lie groups
$G_1\sub G_2\sub\cdots$ (as in \cite{DIR}),
the Lie group $\Diff^\omega(M)$
of real-analytic diffeomorphisms
of a compact real-analytic manifold~$M$
(as in \cite{KaM})
and the Lie group $\Diff_c(M)$
of compactly supported smooth diffeomorphisms
of a finite-dimensional paracompact smooth manifold~$M$
(cf.\ \cite{Mic}); $L^1$-regularity was
established in \cite[Theorem~E]{MeR},
\cite[Theorem~1.2]{Ana}, and \cite[Theorem~B]{MeR},
respectively.
See Theorems 1.1 and 1.7 in \cite{Ana}
for applications to
the natural action
of $\Diff^\omega(M)$ (resp., $\Diff_c(M)$)
on~$M$.
\end{rem}
\section{Preliminaries and notation}\label{sec-prels}
In the following, $\N:=\{1,2,\ldots\}$
and $\N_0:=\N\cup\{0\}$.
All topological vector spaces
are assumed Hausdorff,
with the exception of the spaces
$\cL^p([a,b],E)$ for a Fr\'{e}chet space~$E$,
real numbers $a<b$ and $p\in[1,\infty]$,
which are endowed with the
vector topology determined by the seminorms
$\|\cdot\|_{\cL^p,q}$ for continuous
seminorms~$q$ on~$E$.
The topology induced by $\cL^1([a,b],E)$
on $\cL^p([a,b],E)$ shall be referred to as the
\emph{$\cL^1$-topology} thereon.
We shall use ``locally convex space''
as an abbreviation of ``locally convex topological vector space.''
If $M$ is a subset of a real vector space~$E$,
we write $\conv(M)$ for its convex hull.
If $p\colon E\to [0,\infty[$
is a seminorm, we write $B^p_\ve(x):=\{y\in E\colon p(y-x)<\ve\}$
for the open ball of radius $\ve>0$
around $x\in E$. If $(E,\|\cdot\|)$
is a normed space and the norm is understood, we also write
$B^E_\ve(x)$ in place of $B^{\|\cdot\|}_\ve(x)$.
If $E$ and $F$ are locally convex spaces
and $U\sub E$ is an open subset,
we say that a continuous map $f\colon U\to F$
is $C^1$ if the directional derivative
\[
df(x,y):=\frac{d}{dt}\Big|_{t=0}f(x+ty)
\]
exists in~$F$ for all $(x,y)\in U\times E$,
and the map $df\colon U\times E\to F$
is continuous.
Recursively, given $k\geq 2$ we say that
$f$ is $C^k$ if $f$ is $C^1$ and $df$ is $C^{k-1}$.
If $f$ is $C^k$ for all $k\in\N$,
then $f$ is called $C^\infty$ or \emph{smooth}.
This approach to calculus in locally
convex spaces, which goes back to \cite{Bas},
is known as Keller's $C^k_c$-theory~\cite{Kel}.
We refer to \cite{Res}, \cite{GaN},
\cite{Ham}, \cite{Mil}, and \cite{Nee} for introductions
to this approach to calculus, cf.\ also \cite{BGN}.
For the corresponding concepts of manifolds
and Lie groups modelled on a locally convex space,
see~\cite{Res}, \cite{GaN}, and~\cite{Nee}.
As usual, a Lie group (resp., manifold)
modelled on a Fr\'{e}chet space
shall be called a \emph{Fr\'{e}chet-Lie group}
(resp., a \emph{Fr\'{e}chet manifold}).
If $M$ is a $C^1$-manifold modelled on a locally
convex space, we let $TM$ be its tangent bundle
and write $T_xM$ for the tangent space at $x\in M$.
If $V$ is an open subset of a locally convex space~$E$,
we identify $TV$ with $V\times E$, as usual.
If $f\colon M\to N$ is a $C^1$-map between $C^1$-manifolds, we write
$Tf\colon TM\to TN$ for its tangent map.
In the case of a $C^1$-map $f\colon M\to V\sub E$,
we write $df$ for the second component of the tangent map
\[
Tf\colon TM\to TV=V\times E.
\]
Many preliminaries were already described in the introduction,
and need not be repeated.
For more background
concerning vector-valued $\cL^p$-functions,
vector-valued absolutely continuous functions,
and $L^p$-regularity, see~\cite{MeR}.
\begin{numba}\label{def-abs-mfd}
If $M$ is a $C^1$-manifold modelled on a Fr\'{e}chet space
and $I\sub\R$ an interval,
we say that a function $\eta\colon I\to M$
is \emph{absolutely continuous}
if $\eta$ is continuous
and, for each $t_0\in I$,
there exist a chart $\phi\colon U_\phi\to V_\phi$ of~$M$
and real numbers $\alpha<\beta$ with $[\alpha,\beta]\sub I$
such that $t_0$ is in the interior of~$[\alpha,\beta]$
relative~$I$, $\eta([\alpha,\beta])\sub U_\phi$,
and $\phi\circ \eta|_{[\alpha,\beta]}$
is absolutely continuous.
Equivalently, $\phi\circ \eta|_J$
is absolutely continuous for each chart $\phi\colon U_\phi\to V_\phi$
of~$M$ and each interval $J\sub I$ such that $\eta(J) \sub U_\phi$
(cf.\ Definition 3.20, Lemma~3.21, Lemma~3.18(a)
and 3.15 in \cite{MeR}).
\end{numba}
Let $I\sub \R$ be a non-degenerate interval and $E$ a locally convex space.
As usual, we say that a map $\eta\colon I\to E$ is \emph{differentiable}
at $t_0\in I$ if the limit
\[
\eta'(t_0):=\lim_{t\to t_0}\frac{\eta(t)-\eta(t_0)}{t-t_0}
\]
(with $t\not=t_0$) exists in~$E$.
We shall use a well-known fact (cf.\ \cite[Lemma~1.57]{MeR}):
\begin{numba}\label{pwchain}
Let $E$ and $F$ be locally convex spaces, $U\sub E$ be open and
$f \colon U \to F$ be a $C^1$-map. If
$I \sub \R$ is a non-degenerate interval,
$\eta\colon  I \to E$ a function with $\eta(I)\sub U$ and
$t_0\in I$ such that $\eta'(t_0)$ exists, then also $(f\circ\eta)'(t_0)$
exists and $(f\circ\eta)'(t_0)=df(\eta(t_0),\eta'(t_0))$.
\end{numba}
We deduce from~\ref{pwchain}:
\begin{numba}\label{C1actsAC}
Let $E$ and $F$ be Fr\'{e}chet spaces, $U\sub E$ be open and
$f \colon U \to F$ be a $C^1$-map. If
$I \sub \R$ is a non-degenerate interval
and $\eta\colon  I \to E$ an absolutely continuous function
such that $\eta(I)\sub U$,
then $f\circ\eta\colon I\to F$ is absolutely continuous
(see \cite[Lemma~3.18\,(a)]{MeR}).
For each $t_0\in I$ such that $\eta'(t_0)$ exists
$($which is the case for $\lambda_1$-almost all $t_0\in I)$,
we have that $(f\circ\eta)'(t_0)$
exists and $(f\circ\eta)'(t_0)=df(\eta(t_0),\eta'(t_0))$.
\end{numba}
\begin{numba}\label{dotty}
Let $M$ be a $C^1$-manifold modelled
on a locally convex space~$E$. Let $I\sub\R$ be a non-degenerate
interval, $\eta\colon I\to M$
be a continuous map and $t_0\in I$.
We say that \emph{$\eta$ is differentiable
at $t_0$} if $\phi\circ \eta\colon \eta^{-1}(U_\phi)\to V_\phi$
is differentiable at $t_0$ for some chart
$\phi\colon U_\phi\to V_\phi\sub E$ of~$M$
such that $\eta(t_0)\in U_\phi$.
By~\ref{pwchain},
the latter then holds for any such chart,
and the tangent vector
\[
\dot{\eta}(t_0):=T\phi^{-1}((\phi\circ\eta)(t_0),(\phi\circ\eta)'(t_0))
\in T_{\eta(t_0)}M
\]
is well defined, independent of the choice of~$\phi$.
If no confusion is likely,
we also write $\frac{d\eta}{dt}(t_0):=\dot{\eta}(t_0)$
(e.g., in Definition~\ref{def-loc-flo-mfd}\,(c)).
\end{numba}
\begin{numba}\label{C1onpw}
Let $f\colon M\to N$ be a $C^1$-map between
$C^1$-manifolds modelled
on locally convex spaces. If $I\sub \R$
is a non-degenerate interval, $t_0\in I$
and $\eta\colon I\to M$ a continuous map which is differentiable
al~$t_0$, then also $f\circ \eta\colon I\to N$ is differentiable at~$t_0$
and
\begin{equation}\label{tangpw}
(f\circ\eta)^{\!\textstyle \cdot}(t_0)=Tf(\dot{\eta}(t_0)).
\end{equation}
[Let $\psi\colon U_\psi\to V_\psi$
be a chart of $N$ with $f(\eta(t_0))\in U_\psi$
and $\phi\colon U_\phi\to V_\phi$
be a chart of $M$ with $\eta(t_0)\in U_\phi$
and $f(U_\phi)\sub U_\psi$. Let $J\sub I$ be
a subinterval which is a neighbourhood of $t_0$ in~$I$,
such that $\eta(J)\sub U_\phi$. Then
\[
\psi\circ f\circ\eta|_J=(\psi\circ f\circ \phi^{-1})\circ(\phi\circ\eta|_J)
\]
is differentiable at~$t_0$, by \ref{pwchain}, whence so is $f\circ\eta$.
Moreover, using~\ref{pwchain}, we obtain
$(\psi(f(\eta(t_0))),(\psi\circ f\circ\eta)'(t_0))=
T(\psi\circ f\circ \phi^{-1})(\phi(\eta(t_0)),(\phi\circ\eta)'(t_0))
=T\psi Tf\dot{\eta}(t_0)$. Applying $T\psi^{-1}$ to both sides,
(\ref{tangpw}) follows.\,]
\end{numba}
\begin{numba}\label{pardiff}
If $E_1$, $E_2$, and $F$ are locally convex spaces,
$U_1\sub E_1$ and $U_2\sub E_2$ open subsets and $f\colon
U_1\times U_2\to F$ a $C^1$-map,
then
\[
df((x_1,x_2),(y_1,y_2))=d_1f(x_1,x_2;y_1)+d_2f(x_1,x_2;y_2)
\]
for all $(x_1,x_2)\in U_1\times U_2$ and $(y_1,y_2)\in E_1\times E_2$,
where
\[
d_1f(x_1,x_2,y_1) :=d(f(\cdot,x_2))(x_1,y_1)
\]
and $d_2f(x_1,x_2,y_2):=d(f(x_1,\cdot))(x_2,y_2)$,
see \cite[Proposition~1.2.8]{GaN}.
Likewise,
\begin{eqnarray*}
\lefteqn{df((x_1,x_2,x_3),(y_1,y_2,y_3))=}\qquad\qquad\\
& & d_1f(x_1,x_2,x_3;y_1)+d_2f(x_1,x_2,x_3;y_2)
+d_3f(x_1,x_2,x_3;y_3)
\end{eqnarray*}
for $C^1$-maps $f\colon U_1\times U_2\times U_3\to F$,
in terms of partial differentials.
\end{numba}
If $X$ is a topological space,
we write $\cB(X)$ for the $\sigma$-algebra
of Borel sets (generated by the set of open
subsets of~$X$).
A map between topological spaces is called
\emph{Borel measurable}
if it is measurable with respect to the
$\sigma$-algebras of Borel sets on
domain and range.
As usual, we say that a topological space is
\emph{separable}
if it has a dense, countable subset.
The following fact is useful.
\begin{la}\label{compo-prod}
Let $X$, $X_1$, $X_2$, and $Y$ be topological
spaces and $f\colon X_1\times X_2\to Y$
be a continuous map.
Let $\gamma\colon X\to X_1$ and $\eta\colon X\to X_2$
be Borel measurable mappings.
If $X=\bigcup_{n\in\N}A_n$
with Borel sets $A_n$ such that $\eta(A_n)$
is separable and metrizable
in the topology induced by~$X_2$,
then
\[
f\circ(\gamma,\eta)\colon X\to Y,\quad x\mto f(\gamma(x),\eta(x))
\]
is Borel measurable.
\end{la}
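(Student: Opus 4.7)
The plan is to reduce the statement to the case where $\eta(X)$ itself is second countable, and then to exploit the standard fact that for a product with a second countable factor the Borel $\sigma$-algebra coincides with the product $\sigma$-algebra.

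First I would perform a localization. Since the $A_n$ are Borel subsets covering $X$, a subset $B\sub X$ lies in $\cB(X)$ if and only if $B\cap A_n\in \cB(A_n)$ for every~$n$. Hence $f\circ(\gamma,\eta)$ is Borel measurable on~$X$ if and only if its restriction to each $A_n$ is. The restrictions $\gamma|_{A_n}$ and $\eta|_{A_n}$ remain Borel measurable, and $\eta(A_n)$ is separable metrizable by hypothesis. Thus I may replace $X$ by~$A_n$ and assume throughout that $\eta(X)$ itself is separable and metrizable in the subspace topology inherited from~$X_2$.

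Next I would pass to the subspace $A\defi \eta(X)\sub X_2$. Being separable and metrizable, $A$ is second countable; fix a countable base $(V_n)_{n\in\N}$ of its topology. The standard verification then gives
\[
\cB(X_1\times A)\;=\;\cB(X_1)\otimes \cB(A):
\]
for each open $W\sub X_1\times A$ and each $n\in\N$, let $U_n\sub X_1$ be the union of all open $U\sub X_1$ with $U\times V_n\sub W$; covering each point of $W$ by a basic neighbourhood whose second factor lies in $(V_n)$ yields $W=\bigcup_{n\in\N}U_n\times V_n$, which lies in the product $\sigma$-algebra. Granted this identity, the map $(\gamma,\eta)\colon X\to X_1\times A$ is Borel measurable because each of its components is Borel measurable (noting that $\eta\colon X\to A$ is Borel measurable since $\cB(A)=\{B\cap A:B\in\cB(X_2)\}$).

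Finally, the restriction $f|_{X_1\times A}\colon X_1\times A\to Y$ is continuous, hence Borel measurable, and $f\circ(\gamma,\eta)$ is its composition with the Borel measurable map $(\gamma,\eta)\colon X\to X_1\times A$, so is Borel measurable too. The main obstacle here is precisely the well-known subtlety that $\cB(X_1\times X_2)$ can be strictly larger than $\cB(X_1)\otimes \cB(X_2)$ for a non-separable factor~$X_2$; the separability and metrizability hypothesis on $\eta(A_n)$ is exactly what sidesteps this, by allowing one to restrict to the well-behaved subspace~$A$.
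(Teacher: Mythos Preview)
Your proof is correct and follows essentially the same route as the paper's: localize to each $A_n$, use that for a second countable factor the Borel $\sigma$-algebra of the product agrees with the product $\sigma$-algebra, and then compose with the continuous~$f$. The only cosmetic differences are that the paper cites a reference for $\cB(X_1\times \eta(A_n))=\cB(X_1)\otimes\cB(\eta(A_n))$ whereas you sketch the standard argument, and the paper passes through the trace $\cB(X_1\times X_2)|_{X_1\times\eta(A_n)}$ before composing with~$f$ on $X_1\times X_2$, whereas you work directly with $f|_{X_1\times A}$.
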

\begin{proof}
It suffices to show that $f\circ(\gamma,\eta)|_{A_n}$
is measurable on~$A_n$, endowed with the trace
$\cB(X)|_{A_n}=\cB(A_n)$, for each $n\in\N$.
Since $\eta(A_n)$ is metrizable and separable,
\[
\cB(X_1\times X_2)|_{X_1\times\eta(A_n)}=
\cB(X_1\times\eta(A_n))=\cB(X_1)\otimes\cB(\eta(A_n))
\]
is the product $\sigma$-algebra (see, e.g., \cite[Lemma~2.7]{Mea}).
Hence $(\gamma,\eta)|_{A_n}\colon X\to X_1\times X_2$
is measurable as a map to $X_1\times \eta(A_n)$
with the trace of $\cB(X_1\times X_2)$ and hence
Borel measurable to $X_1\times X_2$.
Since $f$ is continuous and thus Borel measurable,
also the composition $f\circ (\gamma,\eta)|_{A_n}$ is
Borel measurable.
\end{proof}
\begin{numba}\label{repara}
Let $G$ be a Lie group with Lie algebra $\cg:=T_eG$,
modelled on a locally convex space.
Let $\alpha<\beta$ and $a<b$ be real numbers
and $\phi\colon [\alpha,\beta]\to[a,b]$
be the restriction of the unique affine-linear map
$\R\to\R$ taking $\alpha$ to $a$ and $\beta$ to~$b$.
Thus $\phi'$ is the constant function
whose value is the slope $m:=(b-a)/(\beta-\alpha)$.
If $\gamma\colon [a,b]\to \cg$ is a continuous
function admitting an evolution $\eta:=\Evol(\gamma)\colon [a,b]\to G$,
then $\zeta\colon [\alpha,\beta]\to \cg$, $s\mto m\,\gamma(\phi(s))$
has $\eta\circ \phi$ as its evolution, i.e.,
\[
\Evol(\zeta)=\Evol(\gamma)\circ\phi.
\]
In fact, $(\eta\circ\phi)(\alpha)=\eta(a)=e$ holds
and $(\eta\circ\phi)^{\!\textstyle\cdot}(s)=\dot{\eta}(\phi(s))\phi'(s)
=\phi'(s)\eta(\phi(s)).\gamma(\phi(s))=
(\eta\circ \phi)(s).(m\,\gamma(\phi(s)))=(\eta\circ\phi)(s).\zeta(s)$
for all $s\in [\alpha,\beta]$, by the Chain Rule.\\[2.3mm]
Likewise if $G$ is a Fr\'{e}chet-Lie group,
$\gamma \in\cL^1([a,b],\cg)$
and $\eta$ its evolution in the sense
of Carath\'{e}odory solutions (cf.\ \cite{MeR}).
\end{numba}
\section{Initial value problems in Fr\'{e}chet spaces}\label{sec-ex-uni}
We discuss local existence and uniqueness
for Carath\'{e}odory solutions to initial
value problems in Fr\'{e}chet spaces.
The treatment emulates the earlier discussion
of existence and uniqueness of $C^1$-solutions
in~\cite[\S2.4]{GaN}.
\begin{defn}\label{defn-loc-uni}
Let $E$ be a Fr\'{e}chet space
and $f\colon W\to E$ be a function on a subset
$W\sub \R\times E$.
We say that the differential equation
\begin{equation}\label{nowtheode}
y'(t)\,=\, f(t,y(t))
\end{equation}
satisfies \emph{local uniqueness of Carath\'{e}odory
solutions} if the following holds:
For all Carath\'{e}odory solutions
$\gamma_1\colon I_1\to E$ and $\gamma_2\colon I_2\to E$
of (\ref{nowtheode}) and $t_0\in I_1\cap I_2$
such that $\gamma_1(t_0)=\gamma_2(t_0)$,
there exists an interval $K\sub\R$
which is an open neighbourhood of~$t_0$
in $I_1\cap I_2$ such that
\[
\gamma_1|_K=\gamma_2|_K.
\]
\end{defn}
\begin{la}\label{la-loc-glob}
In a Fr\'{e}chet space~$E$,
consider a differential equation {\rm(\ref{nowtheode})}
which satisfies local uniqueness of Carath\'{e}odory
solutions. Assume that\linebreak
$\gamma_1\colon I_1\to E$ and $\gamma_2\colon I_2\to E$
are Carath\'{e}odory solutions to {\rm(\ref{nowtheode})}
such that $\gamma_1(t_0)=\gamma_2(t_0)$ for some
$t_0\in I_1\cap I_2$. Then
\[
\gamma_1|_{I_1\cap I_2}=\gamma_2|_{I_1\cap I_2}.
\]
\end{la}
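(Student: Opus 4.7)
The plan is the standard connectedness argument: we let $J := I_1 \cap I_2$, which is an interval (the intersection of two intervals) and hence connected, and then show that the coincidence set
\[
A := \{t \in J : \gamma_1(t) = \gamma_2(t)\}
\]
is non-empty, open in $J$, and closed in $J$; the conclusion $A = J$ will follow.

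First I would observe that $A$ is non-empty because $t_0 \in A$ by hypothesis. For closedness, I would use that every Carathéodory solution is by definition absolutely continuous, hence continuous; thus $\gamma_1 - \gamma_2 \colon J \to E$ is continuous, and $A = (\gamma_1 - \gamma_2)^{-1}(\{0\})$ is closed in $J$ (since $\{0\}$ is closed in the Fréchet space $E$, which is Hausdorff). For openness, I would pick an arbitrary $t \in A$; then $\gamma_1|_{I_1}$ and $\gamma_2|_{I_2}$ are Carathéodory solutions to the equation with $\gamma_1(t) = \gamma_2(t)$, so the local uniqueness hypothesis (Definition \ref{defn-loc-uni}) provides an interval $K$ which is an open neighbourhood of $t$ in $I_1 \cap I_2 = J$ such that $\gamma_1|_K = \gamma_2|_K$; thus $K \subseteq A$, showing that $A$ is open in $J$.

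Finally, since $J$ is connected and $A \subseteq J$ is non-empty, open, and closed in $J$, we conclude $A = J$, which is exactly the claim $\gamma_1|_{I_1 \cap I_2} = \gamma_2|_{I_1 \cap I_2}$.

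There is no real obstacle here; the argument is routine once local uniqueness is phrased in terms of agreement on an open neighbourhood in $I_1 \cap I_2$, as it is in Definition \ref{defn-loc-uni}. The only points to be a little careful about are (i) that absolutely continuous functions to a Fréchet space are continuous (used for closedness of $A$), and (ii) that the neighbourhood supplied by local uniqueness is automatically a neighbourhood in $J$ rather than merely in one of the two domains, which is built into the definition.
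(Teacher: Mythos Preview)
Your proof is correct and follows essentially the same connectedness argument as the paper: define the coincidence set $A\sub I_1\cap I_2$, observe it is non-empty by hypothesis, closed by continuity of the solutions and the Hausdorff property of~$E$, and open by local uniqueness, then conclude $A=I_1\cap I_2$ since intervals are connected. The extra care you take in noting that absolute continuity implies continuity and that the local-uniqueness neighbourhood lies in $I_1\cap I_2$ is appropriate and matches the paper's reasoning.
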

\begin{proof}
(Compare \cite[Lemma~2.4.6]{GaN} for $C^1$-solutions).
The set $A:=\{t\in I_1\cap I_2\colon \gamma_1(t)=\gamma_2(t)\}$
is closed in $I_1\cap I_2$ since~$E$ is Hausdorff and
the functions $\gamma_1$ and $\gamma_2$ are continuous.
Since~(\ref{nowtheode}) satisfies local uniqueness
of Carath\'{e}odory solutions, the set~$A$ is also open in $I_1\cap I_2$.
By hypothesis, $A\not=\emptyset$. Since $I_1\cap I_2$
is an interval and hence connected, it follows that $A=I_1\cap I_2$
and thus $\gamma_1|_{I_1\cap I_2}=\gamma_2|_{I_1\cap I_2}$.
\end{proof}
\begin{defn}\label{defn-loc-ex}
Let $E$ be a Fr\'{e}chet space,
$J\sub\R$ a non-degenerate interval
and $f\colon W\to E$ be a function on a subset
$W\sub J\times E$. We say that the differential equation (\ref{nowtheode})
satisfies \emph{local existence of Carath\'{e}odory
solutions}\footnote{More precisely, we should speak about
local existence of Carath\'{e}odory solutions
\emph{with respect to~$J$}, but $J$ will always be clear from
the context. Likewise in Definition~\ref{def-locex-mfd}.} if for all $(t_0,y_0)\in W$,
there exists a Carath\'{e}odory solution
$\gamma\colon I\to E$ to
the initial value problem
\begin{equation}\label{theiniprob}
y'(t) \,=\, f(t,y(t)),\quad y(t_0)=y_0
\end{equation}
such that $I$ is a
relatively open subinterval of~$J$.
\end{defn}
\begin{defn}\label{defn-loc-flo}
Let $J\sub \R$ be a non-degenerate interval, $E$ be a Fr\'{e}chet space,
$U\sub E$ be a subset
and $f\colon W\to E$ be a function on an open subset $W\sub J\times U$.
Let $k\in\N\cup\{\infty\}$.
We say that the differential equation $y'(t)=f(t,y(t))$
\emph{admits local flows which are pullbacks of $C^k$-maps}
if,
for all $(\wb{t},\wb{y}) \in W$,
there exist a relatively open interval $I\sub J$ with $\wb{t}\in I$,
an open neighborhood~$V$ of $\wb{y}$ in~$U$
with $I\times V\sub W$
and function
\[
\Phi\colon I\times I\times V\to E
\]
with the following properties:
\begin{itemize}
\item[(a)]
For all $(t_0,y_0)\in I\times V$, the function
$I\to E$, $t\mto \Phi_{t,t_0}(y_0):=\Phi(t,t_0,y_0)$ is a Carath\'{e}odory
solution to
the initial value problem (\ref{theiniprob});
\item[(b)]
There is an open $\wb{y}$-neighbourhood $Y\sub V$
such that $\Phi_{t_1,t_0}(Y)\sub V$ for all $t_0,t_1\in I$ and
\[
\Phi_{t_2,t_1}(\Phi_{t_1,t_0}(y_0))=\Phi_{t_2,t_0}(y_0)
\;\;\mbox{for all $\,t_0,t_1,t_2\in I$ and $y_0\in Y$;}
\]
\item[(c)]
There exist Fr\'{e}chet spaces~$E_1$ and $E_2$,
open subsets $V_1\sub E_1$ and $V_2\sub E_2$,
absolutely continuous functions
$\alpha\colon I\to V_1\sub E_1$ and $\beta\colon I\to V_2\sub E_2$,
and a $C^k$-map $\Psi\colon V_1\times V_2 \times V\to E$
such that\footnote{In particular, $\Phi$ is continuous.}
\[
\Phi(t,t_0,y_0)=\Psi(\alpha(t),\beta(t_0),y_0)\quad\mbox{for all $(t,t_0,y_0)
\in I\times I\times V$.}
\]
Moreover, we require the existence of a Borel set $I_0\sub I$
with $\lambda_1(I\setminus I_0)=0$ such that
$\frac{d}{dt}\Phi_{t,t_0}(y_0)$
exists
and
\[
\frac{d}{dt}\Phi_{t,t_0}(y_0)=f(t,\Phi_{t,t_0}(y_0))
\]
for all $y_0\in V$, $t_0\in I$, and $t \in I_0$.
\end{itemize}
\end{defn}
\begin{rem}\label{flo-then-ex}
If $y'(t)=f(t,y(t))$ (as in Definition~\ref{defn-loc-flo})
admits local flows which are pullbacks
of $C^k$-maps, then $y'(t)=f(t,y(t))$ satisfies
local existence of Carath\'{e}odory solutions.\\[2.3mm]
In fact,
for any $(\wb{t},\wb{y}) \in W$,
the map $I\to E$, $t\mto\Phi(t,\wb{t},\wb{y})$
is a solution to the initial value problem
$y'(t)=f(t,y(t))$, $y(\wb{t})=\wb{y}$
on the relatively open subinterval $I\sub J$
with $\wb{t}\in I$
(with notation
as in Definition~\ref{defn-loc-flo}).
\end{rem}
\begin{la}\label{locflow-uniq}
Let $J\sub \R$ be a non-degenerate interval, $E$ be a Fr\'{e}chet
space,
$U\sub E$ be a subset
and $f\colon W\to E$ be a function on an open subset $W\sub J\times U$.
If the differential equation $y'(t)=f(t,y(t))$
admits local flows which are pullbacks of $C^1$-maps,
then it satisfies local uniqueness
of Carath\'{e}odory solutions.
\end{la}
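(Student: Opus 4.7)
The plan is to reduce uniqueness to showing that two Carath\'{e}odory solutions $\gamma_1,\gamma_2$ of {\rm(\ref{nowtheode})} with $\gamma_1(t_0)=\gamma_2(t_0)=y_0$ both coincide, near~$t_0$, with the flow curve $t\mto\Phi(t,t_0,y_0)$. Applying the hypothesis at $(\wb{t},\wb{y})\defi(t_0,y_0)$ yields a $C^1$-pullback $\Phi(t,s,y)=\Psi(\alpha(t),\beta(s),y)$ on $I\times I\times V$ together with the open neighbourhood $Y\sub V$ from~(b). By continuity (of $\gamma_i$ and of $\Phi$, and using $\Phi(t_0,t_0,y_0)=y_0\in Y$) I choose a subinterval $K\sub I$ which is an open neighbourhood of~$t_0$ in $I_1\cap I_2$ and satisfies $\gamma_i(K)\sub Y$ as well as $\Phi(t_0,s,\gamma_i(s))\in Y$ for all $s\in K$ and $i\in\{1,2\}$.

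The central device is
\[
h_i(s)\defi\Phi(t_0,s,\gamma_i(s))=\Psi(\alpha(t_0),\beta(s),\gamma_i(s)),
\]
which is absolutely continuous by~\ref{C1actsAC} with $h_i(t_0)=y_0$. Property~(b) implies $\Phi(s,t_0,\Phi(t_0,s,y))=\Phi(s,s,y)=y$ for every $y\in Y$, so applied at $y=\gamma_i(s)\in Y$ it gives $\Phi(s,t_0,h_i(s))=\gamma_i(s)$. Consequently, once $h_i\equiv y_0$ on~$K$ is established, $\gamma_i(s)=\Phi(s,t_0,y_0)$ independently of~$i$, yielding $\gamma_1|_K=\gamma_2|_K$. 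The task thus reduces to proving $h_i'\equiv 0$ almost everywhere on~$K$. By~\ref{C1actsAC} together with the ODE $\gamma_i'(s)=f(s,\gamma_i(s))$,
\[
h_i'(s)=d_2\Psi(\alpha(t_0),\beta(s),\gamma_i(s);\beta'(s))+d_3\Psi(\alpha(t_0),\beta(s),\gamma_i(s);f(s,\gamma_i(s)))
\]
for almost every $s\in K$, and the remaining job is to verify the pointwise identity
\[
d_2\Psi(\alpha(t_0),\beta(s),z;\beta'(s))+d_3\Psi(\alpha(t_0),\beta(s),z;f(s,z))=0
\]
at the running point $z=\gamma_i(s)$.

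A uniform form of this identity is obtained by exploiting the flow once more. For each fixed $y_*\in Y$, property~(b) makes the map $s\mto\Psi(\alpha(t_0),\beta(s),\Phi(s,t_0,y_*))$ identically equal to~$y_*$, so by~\ref{C1actsAC} its derivative vanishes on the full-measure set $J_0\defi I_0\cap\{s\in I:\alpha'(s)\text{ and }\beta'(s)\text{ exist}\}$; computing that derivative by the chain rule and substituting $\partial_s\Phi(s,t_0,y_*)=f(s,\Phi(s,t_0,y_*))$ furnished by~(c) produces the displayed identity with $z=\Phi(s,t_0,y_*)$. The main---and expected---obstacle is that the identity is proved at a $y_*$-dependent point $\Phi(s,t_0,y_*)$, whereas in $h_i'(s)$ the point $z=\gamma_i(s)$ moves with~$s$. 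This is resolved by two observations: $J_0$ depends only on $\alpha,\beta,I_0$ and not on~$y_*$, so the identity is valid simultaneously for every $y_*\in Y$ on the same set; and the left-inverse aspect of the flow permits the $s$-dependent choice $y_*\defi h_i(s)\in Y$, for which~(b) gives $\Phi(s,t_0,y_*)=\gamma_i(s)$. Plugging this $y_*$ in at each $s\in J_0\cap K$ yields $h_i'(s)=0$, and absolute continuity of~$h_i$ then forces it to be constant, as required.
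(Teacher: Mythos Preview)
Your proof is correct and follows essentially the same route as the paper's: define $h_i(s)=\Phi_{t_0,s}(\gamma_i(s))$, compute its a.e.\ derivative via the chain rule, and kill that derivative by differentiating the flow identity $\Phi_{t_0,s}(\Phi_{s,t_0}(y_*))=y_*$ and substituting the $s$-dependent choice $y_*=h_i(s)$, having noted that the relevant null set is independent of~$y_*$. The paper organises the same calculation (with $\theta_j$ in place of~$h_i$ and $K_0=K\cap I_0\cap I_{1,0}\cap I_{2,0}$), so there is nothing to add.
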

\begin{proof}
(Compare \cite[Proposition~2.4.20]{GaN} for local $C^1$-flows).
Let $\gamma_j\colon I_j\to E$
be solutions to $y'(t)=f(t,y(t))$
for $j\in \{1,2\}$ and $\wb{t}\in I_1\cap I_2$ such that
$\wb{y}:=\gamma_1(\wb{t})=\gamma_2(\wb{t})$.
To see that $\gamma_1$ and $\gamma_2$ coincide on a neighborhood of~$\wb{t}$
in $I_1\cap I_2$, we may assume
$I_1\cap I_2\not=\{\wb{t}\}$
(excluding a trivial case). Thus $I_1\cap I_2$ is a non-degenerate interval.
For $j\in \{1,2\}$, there exists a Borel set $I_{j,0}\sub I_j$
with $\lambda_1(I_j\setminus I_{j,0})=0$
such that $\gamma_j$ is differentiable at each $t\in I_{j,0}$
and
\[
\gamma_j'(t)\, =\, f(t,\gamma_j(t))\mbox{ \,for all $\, t\in I_{j,0}$.}
\]
Let $I$, $V$, $\Phi$, $Y$, $E_1$, $E_2$, $V_1$, $V_2$,
$\Psi$, $\alpha$, $\beta$, and $I_0$
be as in Definition~\ref{defn-loc-flo}.
After shrinking~$I_0$,
we may assume that, moreover, $\alpha'(t)$ and $\beta'(t)$
exist at each $t\in I_0$.
For $(t,y_0)\in I\times V$,
the partial derivative of~$\Phi$ with respect to the second variable,
\[
(\partial_2\Phi)(t,t_0,y_0):=\frac{\partial\Phi}{\partial t_0}(t,t_0,y_0)
=d_2\Psi(\alpha(t),\beta(t_0),y_0;\beta'(t_0)),
\]
exists for all $t_0\in I$ such that $\beta'(t_0)$
exists, and hence for all $t_0\in I_0$
(where we used~\ref{C1actsAC}
and notation as in \ref{pardiff}).\\[2.3mm]
There exists a relatively open interval $K\sub I_1\cap I_2\cap I$
with $\wb{t}\in K$ such that $\gamma_1(K)\sub Y$,
$\gamma_2(K)\sub Y$ and $\Phi_{t,\wb{t}}(\wb{y})\in Y$ for all $t\in K$.
After shrinking~$K$ if necessary, we can also assume that
\[
\theta_j(t):=\Phi_{\wb{t},t}(\gamma_j(t))\in Y \;\;\mbox{for all
$t\in K$ and $j\in\{1,2\}$.}
\]
Note that $\theta_j$ is absolutely continuous
by~\ref{C1actsAC} as
\[
\theta_j(t)=\Psi(\alpha(\wb{t}),\beta(t),\gamma_j(t));
\]
moreover,
\begin{eqnarray}
\theta_j'(t) &=& d_2\Psi(\alpha(\wb{t}),\beta(t),\gamma_j(t);\beta'(t))
+d_3\Psi(\alpha(\wb{t}),\beta(t),\gamma_j(t);\gamma_j'(t))\notag \\
&=&
\partial_2\Phi(\wb{t},t,\gamma_j(t))+d\Phi_{\wb{t},t}(\gamma_j(t),
\gamma_j'(t))\label{inamin}
\end{eqnarray}
for all $t\in K_0:=K\cap I_0\cap I_{1,0}\cap I_{2,0}$.
It suffices to show that
\[
\gamma_j(t)=\Phi_{t,\wb{t}}(\wb{y})\;\;\mbox{for
all $t\in K$ and $j\in\{1,2\}$.}
\] 
Since $\Phi_{t,\wb{t}}\circ \Phi_{\wb{t},t}|_Y=\id_Y$ for all
$t\in I$ (by (a) and (b) in Definition~\ref{defn-loc-flo}), the map $\Phi_{\wb{t},t}|_Y$ is injective.
Hence $\gamma_1|_K=\gamma_2|_K$
will hold if we can show that both $\theta_1$ and $\theta_2$ coincide
with
\[
\theta\colon K\to E,\quad t\mto \Phi_{\wb{t},t}(\Phi_{t,\wb{t}}(\wb{y}))=\wb{y}.
\]
Since $\theta_j(\wb{t})=\wb{y}=\theta(\wb{t})$ for $j\in\{1,2\}$,
the latter will hold if we can show that
\[
\theta_j'(t)=\theta'(t)=0\;\;\mbox{for all $t\in K_0$}
\]
(and thus for $\lambda_1$-almost all $t\in K$).
Given $z\in Y$, we have
\[
z=\Phi_{\wb{t},t}(\Phi_{t,\wb{t}}(z))=\Psi(\alpha(\wb{t}),\beta(t),\Phi_{t,\wb{t}}(z))
\]
for all $t\in I$
(by (a), (b), and~(c) in Definition~\ref{defn-loc-flo})
and hence, differentiating with respect to~$t$,
\begin{eqnarray}
0 & = & d_2\Psi(\alpha(\wb{t}),\beta(t),\Phi_{t,\wb{t}}(z);\beta'(t))
+d_3\Psi\big(\alpha(\wb{t}),\beta(t),\Phi_{t,\wb{t}}(z),
{\textstyle\frac{d}{dt}}\Phi_{t,\wb{t}}(z)\big)\notag\\
&=&
\partial_2\Phi(\wb{t},t,\Phi_{t,\wb{t}}(z))+d\Phi_{\wb{t},t}(\Phi_{t,\wb{t}}(z),f(t,\Phi_{t,\wb{t}}(z)))\label{tool-flw}
\end{eqnarray}
for all $t\in I_0$
(using that $\frac{d}{dt}\Phi_{t,\wb{t}}(z)=f(t,\Phi_{t,\wb{t}}(z))$
for all $t\in I_0$).
For $t\in K_0$
and $j\in \{1,2\}$,
we can consider $z:=\theta_j(t)\in Y$;
then $\gamma_j(t)=\Phi_{t,\wb{t}}(z)$ .
Since $\gamma_j'(t)=f(t,\gamma_j(t))=f(t,\Phi_{t,\wb{t}}(z))$, using
(\ref{inamin}) we get
\begin{align*}
\theta_j'(t) 
&=\partial_2\Phi(\wb{t},t,\gamma_j(t))+d\Phi_{\wb{t},t}(\gamma_j(t),\gamma_j'(t))\\
&=\partial_2\Phi(\wb{t},t,\Phi_{t,\wb{t}}(z))+
d\Phi_{\wb{t},t}(\Phi_{t,\wb{t}}(z),f(t,\Phi_{t,\wb{t}}(z)))=0
\end{align*}
as a special case of~(\ref{tool-flw}).
\end{proof}
\section{Initial value problems in Fr\'{e}chet manifolds}\label{sec-now-mfd}
We now extend the local theory of Section~\ref{sec-ex-uni}
to the case of Fr\'{e}chet manifolds.
Also, we prove Proposition~\ref{thmA}
and Theorem~\ref{thmB}.
The treatment emulates the discussion
of existence and uniqueness of $C^1$-solutions
in~\cite[\S2.5]{GaN}.
\begin{defn}\label{def-ode-mfd}
Let $M$ be a $C^1$-manifold modelled on a Fr\'{e}chet space~$E$
and $f\colon W\to TM$ be a function on a subset
$W\sub \R\times M$ such that $f(t,y)\in T_yM$
for all $(t,y)\in W$.
We say that a function $\gamma\colon I\to M$
on a non-degenerate interval $I\sub \R$
is a \emph{Carath\'{e}odory solution}
to the differential equation
\begin{equation}\label{ode-mfd}
\dot{y}(t)=f(t,y(t))
\end{equation}
if $\gamma$ is absolutely continuous,
$(t,\gamma(t))\in W$ for all $t\in I$
and
\[
\dot{\gamma}(t)=f(t,\gamma(t))
\]
for $\lambda_1$-almost all $t\in I$
(using notation as in \ref{dotty}).
If $(t_0,y_0)\in W$ is given and $\gamma$ satisfies,
moreover, the condition $\gamma(t_0)=y_0$,
then $\gamma$ is called a Carath\'{e}odory
solution to the initial value problem
\begin{equation}\label{ivp-mfd}
\dot{y}(t)=f(t,y(t)),\quad y(t_0)=y_0.
\end{equation}
\end{defn}
\begin{rem}\label{mfdlocalize}
If $\phi\colon U_\phi\to V_\phi\sub E$
is a chart for~$M$ in the situation of Definition~\ref{def-ode-mfd},
we define a function
\[
f_\phi\colon W_\phi\to E,\quad (t,y)\mto d\phi(f(t,\phi^{-1}(y)))
\]
on $W_\phi:=\{(t,y)\in \R\times V_\phi \colon (t,\phi^{-1}(y))\in W\}\sub\R\times E$.
Let $\gamma\colon I\to M$ be a continuous function on a non-degenerate
interval $I\sub\R$ such that $(t,\gamma(t))\in W$ for all
$t\in I$.
Then~$\gamma$ is a Carath\'{e}odory
solution to (\ref{ode-mfd})
if and only if $\phi\circ \gamma|_K$
is a Carath\'{e}odory solution to
\begin{equation}\label{incha-dgl}
y'(t)=f_\phi(t,\phi^{-1}(y(t)))
\end{equation}
for each chart $\phi\colon U_\phi\to V_\phi\sub E$
and each non-degenerate subinterval $K\sub I$
such that $\gamma(K)\sub U_\phi$.
The latter holds if and only if, for each $t_0\in I$,
there are a chart~$\phi$ and a subinterval $K\sub I$
which is a neighbourhood of~$t_0$ in~$I$ such that
$\gamma(K)\sub U_\phi$ and $\phi\circ\gamma|_K$ solves
(\ref{incha-dgl}),
due to~\ref{C1actsAC}.
If $I=[a,b]$,
equivalently we may take~$K$
in finite set of subintervals
which cover~$I$.
\end{rem}
\begin{defn}\label{def-locuni-mfd}
In the situation of Definition~\ref{def-ode-mfd},
we say that the differential equation (\ref{ode-mfd})
satisfies \emph{local uniqueness of Carath\'{e}odory solutions}
if the following holds:
For all Carath\'{e}odory solutions
$\gamma_1\colon I_1\to M$ and $\gamma_2\colon I_2\to M$
of (\ref{ode-mfd}) and $t_0\in I_1\cap I_2$
such that $\gamma_1(t_0)=\gamma_2(t_0)$,
there exists an interval $K\sub\R$
which is an open neighbourhood of~$t_0$
in $I_1\cap I_2$ such that
$\gamma_1|_K=\gamma_2|_K$.
\end{defn}
\begin{la}\label{mfd-loc-glob}
Consider a differential equation {\rm(\ref{ode-mfd})}
as in Definition~{\rm\ref{def-ode-mfd}},
which satisfies local uniqueness of Carath\'{e}odory
solutions. Assume that
$\gamma_1\colon I_1\to M$ and $\gamma_2\colon I_2\to M$
are Carath\'{e}odory solutions to {\rm(\ref{ode-mfd})}
such that $\gamma_1(t_0)=\gamma_2(t_0)$ for some
$t_0\in I_1\cap I_2$. Then
$\gamma_1|_{I_1\cap I_2}=\gamma_2|_{I_1\cap I_2}$.
\end{la}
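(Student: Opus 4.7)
The plan is to reuse, essentially verbatim, the connectedness argument from Lemma~\ref{la-loc-glob}, since nothing in that proof was specific to the vector-space setting beyond the Hausdorff property and continuity of the solutions. So I would introduce the coincidence set
\[
A:=\{t\in I_1\cap I_2\colon \gamma_1(t)=\gamma_2(t)\}
\]
and show it is non-empty, open, and closed in the connected space $I_1\cap I_2$, forcing $A=I_1\cap I_2$.

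Non-emptiness is given by hypothesis ($t_0\in A$). For closedness, I would invoke that $M$ is a manifold and hence Hausdorff, and that Carath\'{e}odory solutions $\gamma_1,\gamma_2$ are by Definition~\ref{def-ode-mfd} absolutely continuous and therefore continuous; the set on which two continuous maps into a Hausdorff space agree is closed. For openness, I would unpack Definition~\ref{def-locuni-mfd}: at any $t\in A$ we have $\gamma_1(t)=\gamma_2(t)$, so local uniqueness of Carath\'{e}odory solutions (applied at this common value) furnishes a neighbourhood $K$ of $t$ in $I_1\cap I_2$ on which $\gamma_1|_K=\gamma_2|_K$, so $K\sub A$. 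Finally $I_1\cap I_2$ is an interval, hence connected, so $A$ being a non-empty clopen subset forces $A=I_1\cap I_2$, which is the conclusion.

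There is no real obstacle here; the only thing to be slightly careful about is simply noting that absolute continuity in the manifold sense (as in~\ref{def-abs-mfd}) does imply ordinary continuity, so the closedness step is legitimate. Everything else is a verbatim transcription of the proof of Lemma~\ref{la-loc-glob}.
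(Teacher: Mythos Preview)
Your proposal is correct and matches the paper's own proof, which simply says to repeat the proof of Lemma~\ref{la-loc-glob} with $M$ in place of~$E$. The connectedness argument you describe is exactly that proof, and your remark that absolute continuity on manifolds entails continuity (so that closedness of the coincidence set goes through) is the only point that needed checking.
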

\begin{proof}
We can repeat the proof of Lemma~\ref{la-loc-glob}
with $M$ in place of~$E$.
\end{proof}
\begin{defn}\label{def-locex-mfd}
Let $M$ be a $C^1$-manifold modelled on a Fr\'{e}chet space,
$J\sub\R$ be a non-degenerate interval
and $f\colon W\to TM$ be a function on a subset
$W\sub J\times M$
such that $f(t,y)\in T_yM$ for all $(t,y)\in W$.
We say that the differential equation (\ref{ode-mfd})
satisfies \emph{local existence of Carath\'{e}odory
solutions}
if for all $(t_0,y_0)\in W$,
there exists a Carath\'{e}odory solution
$\gamma\colon I\to M$ to
the initial value problem (\ref{ivp-mfd})
such that $I$ is a
relatively open subinterval of~$J$.
\end{defn}
\begin{la}\label{max-sol}
Let $M$ be a $C^1$-manifold modelled on a Fr\'{e}chet space,
$J\sub\R$ be a non-degenerate interval
and $f\colon W\to TM$ be a function on a subset
$W\sub J\times M$
such that $f(t,y)\in T_yM$ for all $(t,y)\in W$.
Assume that the differential equation {\rm(\ref{ode-mfd})}
satisfies both local existence of Carath\'{e}odory
solutions and local uniqueness.
Then, for all $(t_0,y_0)\in W$,
there exists a Carath\'{e}odory
solution $\gamma\colon I\to M$ to the initial value problem
{\rm(\ref{ivp-mfd})}
such that $I_\eta\sub I$ and $\eta=\gamma|_{I_\eta}$
for each Carath\'{e}odory solution $\eta\colon I_\eta\to M$
to {\rm(\ref{ivp-mfd})}.
Moreover, $I$ is relatively open in~$J$.
\end{la}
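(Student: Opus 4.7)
The plan is a standard Zorn-free construction of the maximal solution by gluing all Carathéodory solutions that share the initial condition, using Lemma~\ref{mfd-loc-glob} to make the gluing unambiguous and Definition~\ref{def-locex-mfd} to guarantee that the resulting domain is relatively open in~$J$.

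First I would let $\cS$ denote the collection of all Carathéodory solutions $\eta\colon I_\eta\to M$ to~(\ref{ivp-mfd}), where $I_\eta\sub J$ is a non-degenerate interval with $t_0\in I_\eta$. By local existence $\cS\neq\emptyset$, and since every $I_\eta$ contains~$t_0$, the union $I:=\bigcup_{\eta\in\cS}I_\eta$ is again an interval in~$J$ containing~$t_0$. Define $\gamma\colon I\to M$ by $\gamma(t):=\eta(t)$ for any $\eta\in\cS$ with $t\in I_\eta$; by Lemma~\ref{mfd-loc-glob} any two such choices agree on $I_{\eta_1}\cap I_{\eta_2}$, so $\gamma$ is well defined.

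Next I would check that $\gamma$ itself belongs to~$\cS$, i.e., that it is a Carathéodory solution. Given any $t\in I$, pick $\eta\in\cS$ with $t\in I_\eta$ and a chart $\phi\colon U_\phi\to V_\phi$ with $\eta(t)\in U_\phi$; then on a subinterval $K\sub I_\eta$ that is a neighbourhood of~$t$ in~$I$ and satisfies $\eta(K)\sub U_\phi$, we have $\gamma|_K=\eta|_K$, so $\phi\circ\gamma|_K$ is absolutely continuous and solves the localized equation~(\ref{incha-dgl}). By the local characterization of absolute continuity in~\ref{def-abs-mfd} and the pointwise criterion of Remark~\ref{mfdlocalize}, this shows that $\gamma$ is absolutely continuous on~$I$ and satisfies $\dot\gamma(t)=f(t,\gamma(t))$ for almost all $t\in I$. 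Maximality is then immediate: any other Carathéodory solution $\eta\colon I_\eta\to M$ to~(\ref{ivp-mfd}) lies in~$\cS$, so $I_\eta\sub I$, and by Lemma~\ref{mfd-loc-glob} (or by construction) $\eta=\gamma|_{I_\eta}$.

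Finally I would verify that $I$ is relatively open in~$J$. Fix any $t_1\in I$; then $(t_1,\gamma(t_1))\in W$, so by local existence there is a Carathéodory solution $\tilde\eta\colon \tilde I\to M$ to $\dot y=f(t,y)$ with $\tilde\eta(t_1)=\gamma(t_1)$ on a relatively open subinterval $\tilde I\sub J$ containing~$t_1$. Picking $\eta\in\cS$ with $t_1\in I_\eta$ and applying Lemma~\ref{mfd-loc-glob} to the IVP based at $t_1$, we see that $\eta$ and $\tilde\eta$ agree on $I_\eta\cap\tilde I$. The glued map $\hat\eta\colon I_\eta\cup\tilde I\to M$ defined piecewise is again absolutely continuous and solves~(\ref{ode-mfd}) (absolute continuity and the ODE are local conditions, so the pieces fit across each endpoint of overlap), and it still satisfies $\hat\eta(t_0)=y_0$. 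Hence $\hat\eta\in\cS$, so $\tilde I\sub I_\eta\cup\tilde I\sub I$, showing $I$ contains a $J$-relative neighbourhood of~$t_1$. The only mildly technical step is the gluing in the last paragraph; once Lemma~\ref{mfd-loc-glob} is available it is essentially bookkeeping, so I do not expect any serious obstacle.
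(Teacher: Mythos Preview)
Your argument is correct and follows the same standard gluing strategy as the paper: build the maximal domain as a union of domains of solutions sharing the initial condition, use Lemma~\ref{mfd-loc-glob} to make the gluing well defined, and invoke local existence to show the domain is relatively open. The paper organizes this via a case split on whether $t_0$ is an endpoint of~$J$ and works explicitly with compact subintervals $[t_0,\tau]$, whereas you work directly with the full collection~$\cS$ and appeal to the local criterion in Remark~\ref{mfdlocalize}; the only point to make slightly more explicit is that for each $t\in I$ one can choose $\eta\in\cS$ so that $t$ lies in the interior of $I_\eta$ relative to~$I$ (which follows since all $I_\eta$ contain~$t_0$ and $I=\bigcup_\eta I_\eta$).
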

\begin{proof}
Case~1: Let us first assume that $t_0$ is the minimum of~$J$
(Case~2, that $t_0$ is the maximum of~$J$, is analogous).
The set $L$ of all
$\tau\in \; ]t_0,\infty[\,\cap J$
such that (\ref{ivp-mfd})
has a solution $\eta_\tau\colon [t_0,\tau]\to M$
is a subinterval of $\,]t_0,\infty[\,\cap J$,
and also $I:=L\cup\{t_0\}$ is an interval.
If $t\in I$, there exists $\tau\in L$ such that $t\in [t_0,\tau]$;
we define
\[
\gamma(t):=\gamma_\tau(t).
\]
If $\sigma,\tau\in L$ and $\sigma\leq \tau$,
then $\gamma_\sigma=\gamma_\tau|_{[t_0,\sigma]}$
by Lemma~\ref{mfd-loc-glob},
entailing that $\gamma\colon I\to M$ is well defined.
By construction, we have
\[
\gamma|_{[t_0,\tau]}=\gamma_\tau
\]
for each $\tau\in L$; notably, $\gamma(t_0)=\gamma_\tau(t_0)=y_0$.
If $[a,b]\sub I$ with $a<b$,
then $\gamma|_{[a,b]}=\gamma_b|_{[a,b]}$ is absolutely
continuous, whence $\gamma$ is absolutely continuous.\\[2.3mm]
If $I$ does not have a maximum,
then $I$ is open in $J=[t_0,\infty[\,\cap J$.
We can take an ascending sequence $t_0<\tau_1<\tau_2<\cdots$ tending to
the supremum of~$I$. Since $\gamma|_{[t_0,\tau_j[}$
is differentiable at almost all $t$ in its domain,
with derivative $f(t,\gamma(t))$,
the same is true of~$\gamma$.
Thus~$\gamma$ is a Carath\'{e}odory solution to~(\ref{ivp-mfd}).\\[2.3mm]
If $I$ has a maximum~$\tau$, then $\tau\in L$ and $\gamma=\gamma_\tau$
is a Carath\'{e}odory solution to (\ref{ivp-mfd}).
We show that $\tau$ is also the maximum of~$J$,
whence $[t_0,\tau]=J$ is open in~$J$.
If not, using local existence we find a
Carath\'{e}odory solution $\eta$ of (\ref{ode-mfd})
on an interval $K\sub J$ with $\tau$
in the interior of~$K$ relative~$J$, such that $\eta(\tau)=\gamma(\tau)$.
Then $[\tau,\theta]\sub K$ for some $\theta>\tau$
and we can extend~$\gamma$ to a solution of (\ref{ivp-mfd})
defined on $[t_0,\theta]$ by taking $t\in [\tau,\theta]$
to $\eta(t)$. Thus $\theta\in L$; since $\theta>\tau$,
this contradicts $\tau=\max L$.\\[2.3mm]
Case~3: If $t_0$ is in the interior
of~$J$ relative~$\R$,
then Case~1 provides a solution
$\gamma_+$ to the initial value problem
on a largest subinterval $I_+\sub J\cap [t_0,\infty[$.\linebreak

\noindent
Likewise, Case~2 
provides a solution
$\gamma_-$
on a largest subinterval $I_-$ of\linebreak
$J\cap \,]{-\infty},t_0]$.
Then $I:=I_+\cup I_-$
and the function $\gamma\colon I\to M$ which is defined
piecewise via $\gamma(t):=\gamma_\pm(t)$ for $t\in I_\pm$
are as required.
\end{proof}
\begin{defn}\label{max-flow}
The solution $\gamma_{t_0,y_0}:=\gamma$
described in Lemma~\ref{max-sol}
is called the \emph{maximal solution}
to the initial value problem (\ref{ivp-mfd});
we write $I_{t_0,y_0}:=I$ for its domain.
We abbreviate
\[
\Omega:=\bigcup_{(t_0,y_0)\in W}I_{t_0,y_0}\times\{(t_0,y_0)\}\;\sub\,
\R\times \R\times M
\]
and call
\[
\Fl\colon \Omega\to M,\quad \Fl(t,t_0,y_0):=\gamma_{t_0,y_0}(t)
\]
the \emph{maximal flow} associated with (\ref{ode-mfd}).
We also write $\Fl_{t,t_0}(y_0):=\Fl(t,t_0,y_0)$.
\end{defn}
\begin{defn}\label{def-loc-flo-mfd}
Let $J\sub \R$ be a non-degenerate interval, $k\in\N\cup\{\infty\}$,
$M$ be a $C^k$-manifold
modelled on a Fr\'{e}chet space
and $f\colon W\to TM$ be a function on an open subset $W\sub J\times M$
such that $f(t,y)\in T_yM$ for all $(t,y)\in W$.
We say that the differential equation $\dot{y}(t)=f(t,y(t))$
\emph{admits local flows which are pullbacks of $C^k$-maps}
if,
for all $(\wb{t},\wb{y}) \in W$,
there exist a relatively open interval $I\sub J$ with $\wb{t}\in I$,
an open neighborhood~$V$ of $\wb{y}$ in~$M$
with $I\times V\sub W$
and function
\[
\Phi\colon I\times I\times V\to M
\]
with the following properties:
\begin{itemize}
\item[(a)]
For all $(t_0,y_0)\in I\times V$, the function
$I\to E$, $t\mto \Phi_{t,t_0}(y_0):=\Phi(t,t_0,y_0)$ is a Carath\'{e}odory
solution to
the initial value problem (\ref{ivp-mfd});
\item[(b)]
There is an open $\wb{y}$-neighbourhood $Y\sub V$
such that $\Phi_{t_1,t_0}(Y)\sub V$ for all $t_0,t_1\in I$ and
\[
\Phi_{t_2,t_1}(\Phi_{t_1,t_0}(y_0))=\Phi_{t_2,t_0}(y_0)
\;\;\mbox{for all $\,t_0,t_1, t_2\in I$ and $y_0\in Y$;}
\]
\item[(c)]
There exist $C^k$-manifolds~$N_1$ and $N_2$
modelled on Fr\'{e}chet spaces
$E_1$ and $E_2$, respectively,
absolutely continuous functions
$\zeta_1\colon I\to N_j$ for $j\in \{1,2\}$
and a $C^k$-map $\Psi\colon N_1\times N_2 \times V\to M$
such that\footnote{In particular, $\Phi$ is continuous.}
\[
\Phi(t,t_0,y_0)=\Psi(\zeta_1(t),\zeta_2(t_0),y_0)\quad\mbox{for all $(t,t_0,y_0)
\in I\times I\times V$.}
\]
Moreover, we require the existence of a Borel set $I_0\sub I$
with $\lambda_1(I\setminus I_0)=0$ such that
$\frac{d}{dt}\Phi_{t,t_0}(y_0)$ exists
and
\[
\frac{d}{dt}\Phi_{t,t_0}(y_0)=f(t,\Phi_{t,t_0}(y_0))
\]
for all $y_0\in V$, $t_0\in I$ and $t \in I_0$.
\end{itemize}
\end{defn}
\begin{rem}
If $\dot{y}(t)=f(t,y(t))$ (as in Definition~\ref{def-loc-flo-mfd})
admits local flows which are pullbacks
of $C^k$-maps, then $\dot{y}(t)=f(t,y(t))$ satisfies
local existence of Carath\'{e}odory solutions
(arguing as in Remark~\ref{flo-then-ex}).
\end{rem}
\begin{rem}
We might speak about
local flows which are pullbacks of $C^k$-maps
\emph{on Fr\'{e}chet manifolds} in the situation
of Definition~\ref{def-loc-flo-mfd},
and speak about local flows which are pullbacks
of $C^k$-maps
\emph{on open subsets of Fr\'{e}chet spaces}
in the situation of Definition~\ref{defn-loc-flo},
to distinguish clearly between the concepts
(likewise, we should use separate terminology
in Definition~\ref{defn-loc-flo-2}). But it will
always be clear from the context
what is intended.
\end{rem}
{\bf Proof of Proposition~\ref{thmA}.}
For each chart $\phi\colon U_\phi\to V_\phi\sub E$ of~$M$,
let $f_\phi\colon W_\phi\to E$ be as in~Remark~\ref{mfdlocalize}.
We claim: Each differential equation
\begin{equation}\label{loca-dgl}
y'(t)\, =\, f_\phi(t,y(t))
\end{equation}
admits local flows which are pullbacks of~$C^1$-maps,
in the sense of Definition~\ref{defn-loc-flo}.
As a consequence, each of the differential equations
(\ref{loca-dgl})
satisfies local uniqueness of Carath\'{e}odory
solutions, by Lemma~\ref{locflow-uniq}. This implies that (\ref{ode-mfd})
satisfies local uniqueness of Carath\'{e}odory solutions (cf.\ Remark~\ref{mfdlocalize}),
which completes the proof.
To establish the claim, let $(\bar{t},\bar{z})\in W_\phi$
and $\bar{y}:=\phi^{-1}(\bar{z})\in U_\phi$.
Then
\[
(\bar{t},\bar{y})\in W\sub J\times M.
\]
Let $I$, $V$, $\Phi$, $Y$, $N_1$, $N_2$, $E_1$, $E_2$,
$\Psi$, $\zeta_1$, $\zeta_2$, and $I_0$
be as in Definition~\ref{def-loc-flo-mfd}.
After shrinking~$I_0$ if necessary, we may assume
that, moreover, $\dot{\zeta}_1(t)$ and $\dot{\zeta}_2(t)$
exist for all $t\in I_0$.\\[2.3mm]
There exist charts $\phi_j\colon U_j\to V_j\sub E_j$
of $N_j$ with $\zeta_j(\wb{t})\in U_j$
for $j\in\{1,2\}$.
Since $\Psi(\zeta_1(\bar{t}),\zeta_2(\bar{t}),\bar{y})
=\Phi_{\bar{t},\bar{t}}(\bar{y})=\bar{y}\in U_\phi$
and $\Psi$ is continuous,
after shrinking $U_1$ and $U_2$
we find an open $\bar{y}$-neighbourhood $U\sub U_\phi$
such that
\[
\Psi(U_1\times U_2\times U)\sub U_\phi.
\]
Then $P:=\phi(U)$ is an open $\bar{z}$-neighbourhood
in~$V_\phi$.
After shrinking $U_1$ and $U_2$ further,
we may assume that
\[
\Psi(U_1\times U_2\times Z)\sub U
\]
for some open $\bar{y}$-neighbourhood $Z\sub U$.
Then $Q:=\phi(Z)$ is an open $\bar{z}$-neighbourhood
in~$P$.
After shrinking~$I$, we may assume that
$\zeta_j(I)\sub U_j$ for $j\in\{1,2\}$.
Then
\[
\Psi_\phi\colon V_1\times V_2\times P\to E,\quad
(u,v,w)\mto \phi(\Psi(\phi_1^{-1}(u),\phi_2^{-1}(v),\phi^{-1}(w)))
\]
is a $C^1$-map. Moreover, $\alpha:=\phi_1\circ\zeta_1\colon I\to V_1\sub E_1$
and $\beta:=\phi_2\circ \zeta_2\colon I\to V_2\sub E_2$
are absolutely continuous functions which are differentiable
at each $t\in I_0$.
Define
\[
\Phi_\phi\colon I\times I\times P\to E,\quad
(t,t_0,z_0)\mto \Psi_\phi(\alpha(t),\beta(t_0),z_0)
=\phi(\Phi_{t,t_0}(\phi^{-1}(z_0))).
\]
Using \ref{C1onpw},
it is now easy to check that $\Phi_\phi$ and $\Psi_\phi$
in place of $\Phi$ and $\Psi$,
with $P$ and $Q$ in place of $V$ and $Y$,
satisfy the conditions (a)--(c) of Definition~\ref{defn-loc-flo},
for~$f_\phi$ in place of~$f$ and~$\bar{z}$ in place of~$\bar{y}$.
This establishes the claim. $\;\square$
\section{Initial value problems on {\boldmath$G$}-manifolds}
We now prove Theorem~\ref{thmB}, which provides
a criterion for
local uniqueness of Carath\'{e}odory
solutions to differential equations
on smooth $G$-manifolds which are given by time-dependent
fundamental vector fields.\\[2.3mm]
{\bf Proof of Theorem~\ref{thmB}.}
Abbreviate $I:=J:=[a,b]$.
Define
\begin{equation}\label{therelf}
f\colon I\times M\to TM,\quad f(t,y):=\gamma(t)_\sharp(y).
\end{equation}
Since $\eta\colon I \to G$ is absolutely continuous
and the evolution of $\gamma$,
there exists a Borel set $I_0\sub I$
with $\lambda_1(I\setminus I_0)=0$
such that $\dot{\eta}(t)$
exists for all $t\in I_0$
and
\[
\dot{\eta}(t)=\eta(t).\gamma(t)\quad\mbox{for all $\,t\in I_0$.}
\]
Abbreviate $\sigma_y(g):=y.g:=\sigma(y,g)$ for $y\in M$ and $g\in G$.
We define
\[
\Phi\colon I\times I\times M\to M,\quad (t,t_0,y_0)\mto \sigma(y_0,\eta(t_0)^{-1}\eta(t))
\]
and write $\Phi_{t,t_0}(y_0):=\Phi(t,t_0,y_0)$.
The map
\[
\Psi\colon G\times G\times M\to M,\quad
(g,h,y)\mto \sigma(y,h^{-1}g)
\]
is smooth and
\[
\Phi(t,t_0,y_0)=\Psi(\zeta_1(t),\zeta_2(t_0),y_0)
\]
for all $(t,t_0,y_0)\in I\times I\times M$
with $\zeta_1:=\zeta_2:=\eta$.\\[2.3mm]
Given $y_0\in M$ and $t_0\in I$, we obtain an absolutely
continuous function $\theta\colon I\to G$ via
\[
\theta(t):=\eta(t_0)^{-1}\eta(t).
\]
Also
$\phi\colon I\to M$, $\phi(t):=y_0.\theta(t)=\Phi_{t,t_0}(y_0)$
is absolutely continuous as $\phi(t)=\sigma_{y_0}\circ\theta$.
By definition, $\phi(t_0)=y_0$.
For each $t\in I_0$, we have
\[
\dot{\theta}(t)=\eta(t_0)^{-1}.\dot{\eta}(t)=\eta(t_0)^{-1}\eta(t).\gamma(t),
\]
whence $\theta(t)^{-1}.\dot{\theta}(t)=
\eta(t)^{-1}\eta(t_0).(\eta(t_0)^{-1}\eta(t).\gamma(t))=\gamma(t)$ and
\begin{eqnarray*}
\dot\phi(t)&=&\frac{d}{ds}\Big|_{s=0}\phi(t+s)=\frac{d}{ds}\Big|_{s=0}y_0.\theta(t+s)\\
&=& \frac{d}{ds}\Big|_{s=0}
(y_0.\theta(t)).\theta(t)^{-1}\theta(t+s)
=
T(\sigma_{y_0.\theta(t)})(\theta(t)^{-1}.\dot{\theta}(t))\\
&=& T(\sigma_{y_0.\theta(t)})\gamma(t)
= \gamma(t)_\sharp(\phi(t))=f(t,\phi(t)).
\end{eqnarray*}
Hence $\phi$ solves (\ref{ivp-mfd}).
Thus conditions~(a) and~(c)
of Definition~\ref{def-loc-flo-mfd}
are satisfied (with $I:=J$, $N_1:=N_2:=G$,
$Y:=V:=M$, $\Phi$, $\Psi$,
$\zeta_1$, $\zeta_2$, and $I_0$ independent of $(\wb{t},\wb{y})\in [a,b]\times M$).
For all
$t_0,t_1,t_2\in I$ and $y_0\in M$, setting
$y_1:=\Phi_{t_1,t_0}(y_0)=y_0.\eta(t_0)^{-1}\eta(t_1)$,
we have
\begin{eqnarray*}
\Phi_{t_2,t_1}(\Phi_{t_1,t_0}(y_0))\!\!&\!\!=\!\! &\!\! \Phi_{t_2,t_1}(y_1)
=y_1.\eta(t_1)^{-1}\eta(t_2)
=(y_0.\eta(t_0)^{-1}\eta(t_1)).\eta(t_1)^{-1}\eta(t_2)\\
\!\!&\!\!=\!\! &\!\!  y_0.\eta(t_0)^{-1}\eta(t_2)=\Phi_{t_2,t_0}(y_0).
\end{eqnarray*}
Thus condition~(b) of Definition~\ref{def-loc-flo-mfd}
is satisfied by~$\Phi$.
We have shown that (\ref{ode-mfd}),
applied to~$f$ as in (\ref{therelf}),
admits local flows which are pullbacks of $C^\infty$-maps.
Thus (\ref{ode-mfd}) satisfies local uniqueness of Carath\'{e}odory
solutions. As $t\mto\Phi_{t,t_0}(y_0)$
is a solution to~(\ref{ivp-mfd}) defined on all of~$I$,
we see that $I_{t_0,y_0}=I$
for all $(t_0,y_0)\in I\times M$
and $\gamma_{t_0,y_0}(t)=\Phi_{t,t_0}(y_0)$.
The domain of the maximal flow~$\Fl$ of~(\ref{ode-mfd})
is therefore given by $\Omega=I\times I\times M$
here, and $\Fl(t,t_0,y_0)=\gamma_{t_0,y_0}(t)=\Phi_{t,t_0}(y_0)$,
which completes the proof.
$\;\square$
\section{Reachable neighbourhoods in {\boldmath$G$}-manifolds}\label{sec-approachable}
In this section, we prove Theorem~\ref{thmC}.
We begin with preparatory results.
First, we discuss the approximation
of vector-valued $\cL^1$-functions by
staircase functions.
Using the Trotter product formula,
we then provide preparations enabling
us to replace controls in the convex hull
$\conv\{v_1,\ldots, v_m\}$ of finitely many vectors
with controls in $\{v_1,\ldots,v_m\}$.\\[2.3mm]
For each Fr\'{e}chet space~$E$,
the space of $E$-valued staircase functions
is dense in $\cL^1([a,b],E)$.
Moreover, we have:
\begin{la}\label{approx-stair}
Let $a<b$ be real numbers, $E$ be a Fr\'{e}chet space,
and $\gamma\in \cL^1([a,b],E)$.
Let $q$ be a continuous seminorm on~$E$ and $\ve>0$.
Then we~have:
\begin{itemize}
\item[\rm(a)]
There exists a staircase function
$\eta\colon [a,b]\to E$ such that $\eta([a,b])\sub \gamma([a,b])$
and $\|\gamma-\eta\|_{\cL^1,q}\leq\ve$.
\item[\rm(b)]
There exists a continuous function
$\theta\colon [a,b]\to E$ such that $\theta([a,b])\sub \conv(\gamma([a,b]))$
and $\|\gamma-\theta\|_{\cL^1,q}\leq\ve$.
\end{itemize}
\end{la}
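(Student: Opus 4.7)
The plan is to handle (a) by first producing a simple function whose values lie in a countable dense subset of $\gamma([a,b])$, then approximating its level sets by finite unions of intervals to obtain a staircase function, and to obtain (b) by smoothing out the jumps of such a staircase function using affine interpolation.

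For (a), I would fix a countable dense subset $D=\{d_n:n\in\N\}$ of $\gamma([a,b])$, supplied by the definition of $\cL^1$. Given $\ve>0$, choose $\delta>0$ with $\delta(b-a)<\ve/3$ and define the disjoint measurable sets
\[
A_n:=\gamma^{-1}(B^q_\delta(d_n))\setminus\bigcup_{k<n}\gamma^{-1}(B^q_\delta(d_k)),
\]
whose union is all of $[a,b]$ by density of~$D$. Since $\int q(\gamma)\,d\lambda_1<\infty$, absolute continuity of this integral lets me choose~$N$ large so that the tail $B:=[a,b]\setminus\bigcup_{n\leq N}A_n$ satisfies $\int_B q(\gamma)\,d\lambda_1+q(d_1)\lambda_1(B)<\ve/3$. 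Setting $\phi:=\sum_{n=1}^N d_n\mathbf{1}_{A_n}+d_1\mathbf{1}_B$ gives $\phi([a,b])\sub D\sub \gamma([a,b])$ and $\|\gamma-\phi\|_{\cL^1,q}<2\ve/3$.

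Next I would replace $\phi$ by a staircase function. By regularity of Lebesgue measure, each~$A_n$ with $n\leq N$ is approximable, in measure, by a finite disjoint union of intervals $C_n\sub [a,b]$. Setting $C_1':=C_1$ and $C_n':=C_n\setminus\bigcup_{k<n}C_k'$ yields disjoint finite unions of intervals; define $\eta$ to take the value $d_n$ on $C_n'$ and $d_1$ on the complementary finite union of intervals. Refining the endpoints of all these intervals to a single partition $a=t_0<t_1<\cdots<t_M=b$ exhibits $\eta$ as a staircase function with $\eta([a,b])\sub\{d_1,\ldots,d_N\}\sub\gamma([a,b])$. Choosing $\lambda_1(A_n\triangle C_n)$ small enough relative to $\max_{n\leq N}q(d_n)$ forces $\|\phi-\eta\|_{\cL^1,q}<\ve/3$, and the triangle inequality then yields (a). For (b), apply (a) with $\ve/2$ to obtain a staircase $\eta$ with breakpoints $a=t_0<\cdots<t_n=b$ and values $v_j\in\gamma([a,b])$ on $(t_{j-1},t_j)$. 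Pick $r>0$ less than half of every $t_j-t_{j-1}$ and define $\theta$ to equal $v_j$ on $[t_{j-1}+r,t_j-r]$ and to interpolate affinely between $v_j$ and $v_{j+1}$ across each window $[t_j-r,t_j+r]$ (with the natural adjustment at the endpoints $a$ and $b$). Then $\theta$ is continuous with $\theta([a,b])\sub\conv\{v_1,\ldots,v_n\}\sub\conv(\gamma([a,b]))$, and on each interpolation window $q(\eta-\theta)\leq\max_j q(v_j)+\max_j q(v_{j+1})$; taking $r$ so small that the total contribution is below $\ve/2$ completes the estimate.

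The main obstacle is the constrained simple-function approximation in the first step. The usual density of simple functions in $\cL^1$ permits arbitrary coefficients in~$E$, whereas here the coefficients must be forced into $\gamma([a,b])$; this is what requires the bookkeeping with the sets~$A_n$ built from a countable dense subset of $\gamma([a,b])$, together with a tail argument based on absolute continuity of $\int q(\gamma)\,d\lambda_1$ to control the default value~$d_1$ chosen on $B$. Once this step is done, passage to staircase and then continuous functions is routine measure theory plus a local affine smoothing.
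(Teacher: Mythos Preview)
Your proposal is correct and follows essentially the same route as the paper's proof: build a simple function with values in a countable dense subset of $\gamma([a,b])$ via disjoint level sets, truncate using absolute continuity of the integral, approximate the level sets by finite unions of intervals to obtain the staircase, and then for (b) apply affine interpolation near the jumps. The only cosmetic differences are that the paper passes through compact subsets and then disjoint open neighbourhoods (rather than directly approximating by intervals), and in~(b) uses dominated convergence along a sequence $\theta_k$ rather than a single direct estimate; neither changes the substance of the argument.
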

\begin{proof}
(a) Let $(y_n)_{n\in\N}$ be a sequence in~$\gamma([a,b])$
such that $\{y_n\colon n\in\N\}$ is dense in $\gamma([a,b])$.
Abbreviate $T:=b-a$. Define
\[
A_1:=\Big\{t\in [a,b]\colon q(\gamma(t)-y_1)<\frac{\ve}{6T}\Big\}
\]
and, recursively,
\[
A_n:=\Big\{t\in [a,b]\setminus (A_1\cup\cdots\cup A_{n-1})\colon q(\gamma(t)-y_n)<\frac{\ve}{6T}\Big\}
\]
for integers $n\geq 2$. Then $(A_n)_{n\in\N}$
is a sequence of pairwise disjoint Borel sets with union $[a,b]$.
There exists $N\in\N$ such that $R:=\bigcup_{n>N}A_n$ satisfies
\[
\int_R q(\gamma(t))\,dt<\ve/6
\]
and
\begin{equation}\label{auchnoch}
q(y_1)\lambda_1(R)<\ve/12.
\end{equation}
Using characteristic functions of the sets $A_n$, we define
\[
\gamma_1\colon [a,b]\to E,\quad
t\mto\sum_{n=1}^N\one_{A_n}(t)y_n.
\]
If $n\in\{1,\ldots, N\}$ and $t\in A_n$, then
$q(\gamma(t)-\gamma_1(t))=q(\gamma(t)-y_n)<\ve/(6T)$.
If $t\in R$, then $q(\gamma(t)-\gamma_1(t))=q(\gamma(t))$.
Hence, abbreviating $A:=A_1\cup\cdots\cup A_N$,
\[
\|\gamma-\gamma_1\|_{\cL^1,q}=\int_Aq(\gamma(t)-\gamma_1(t))\,dt
+\int_Rq(\gamma(t))\,dt\leq
\frac{\ve}{6T}\lambda_1(A)+\frac{\ve}{6}\leq \frac{\ve}{3}.
\]
Pick a real number $C>0$ such that
\[
q(y_n)\leq C\mbox{ for all $n\in\{1,\ldots, N\}$.}
\]
By inner regularity of $\lambda_1$,
for $n\in\{1,\ldots, N\}$ we find a compact subset $K_n\sub A_n$
such that
\begin{equation}\label{willct}
\lambda_1(A_n\setminus K_n)\leq \frac{\ve}{12NC}.
\end{equation}
Hence $\gamma_2:=\sum_{n=1}^N\one_{K_n}y_n$ satisfies
\[
\|\gamma_1-\gamma_2\|_{\cL^1,q}
=\sum_{n=1}^Nq(y_n)\lambda_1(A_n\setminus K_n)\leq\ve/12\leq \ve/3.
\]
The compact sets $K_1,\ldots, K_N$ are pairwise disjoint,
whence we find pairwise disjoint open subsets $U_1,\ldots, U_N$
of $[a,b]$ such that $K_n\sub U_n$ for all $n\in\{1,\ldots, N\}$.
By outer regularity of~$\lambda_1$, after shrinking
the sets if necessary we may assume that
\[
\lambda_1(U_n\setminus K_n)<\frac{\ve}{6NC}\mbox{ for all $n\in\{1,\ldots,N\}$.}
\]
After replacing $U_n$ with a finite number
of its connected components, we may assume that each $U_n$
is a union of finitely many pairwise disjoint intervals
which are open in $[a,b]$.
Let
\[
U:=\bigcup_{n=1}^NU_n\quad\mbox{and}\quad B:=[a,b]\setminus U.
\]
Then $\eta:=y_1\one_B+\sum_{n=1}^N y_n\one_{U_n}$
is a staircase function with $\eta([a,b])\sub \gamma([a,b])$
since $\eta(t)=y_n\in \gamma([a,b])$
for each $n\in\{1,\ldots,N\}$ and $t\in U_n$, while
$\eta(t)=y_1\in \gamma([a,b])$ for all $t\in B$.
Since
\[
B\sub [a,b]\setminus (K_1\cup\cdots\cup K_N)
=R\cup\bigcup_{n=1}^N(A_n\setminus K_n),
\]
using (\ref{auchnoch}) and (\ref{willct}) we estimate
\[
q(y_1)\lambda_1(B)\leq \ve/12+\ve/12=\ve/6.
\]
Thus
\[
\|\gamma_2-\eta\|_{\cL^1,q}
=\sum_{n=1}^N\lambda_1(U_n\setminus K_n)q(y_n)+\lambda_1(B)q(y_1)
\leq \ve/6 + \ve/6=\ve/3,
\]
whence
$\|\gamma-\eta\|_{\cL^1,q}\leq\|\gamma-\gamma_1\|_{\cL^1,q}
+\|\gamma_1-\gamma_2\|_{\cL^1,q}+\|\gamma_2-\eta\|_{\cL^1,q}\leq\ve$.\\[2.3mm]
(b) By (a), there exists a staircase function $\eta\colon [a,b]\to E$
such that $\eta([a,b])\sub\gamma([a,b])$ and $\|\gamma-\eta\|_{\cL^1,q}\leq\ve/2$.
There exist $n\in \N$ and numbers $a=t_0<t_1<\cdots<t_n=b$
such that $\eta|_{]t_{j-1},t_j[}$ is constant,
with value $y_j\in\gamma([a,b])$, for all $j\in\{1,\ldots,n\}$.
Choose $\delta>0$ so small that $2\delta<t_j-t_{j-1}$ for all $j\in\{1,\ldots,n\}$.
For $k\in \N$, we define $\theta_k\in C([a,b],E)$ piecewise,
as follows: We let $\theta_k(t):=y_1$ for $t\in [a,t_1-\delta/k]$
and $\theta_k(t):=y_n$ for $t\in [t_{n-1}+\delta/k,b]$.
We let $\theta_k(t):=y_j$ for all $j\in \{2,\ldots,n-1\}$
and $t\in [t_{j-1}+\delta/k,t_j-\delta/k]$.
Finally, for $j\in \{1,\ldots,n-1\}$ and $t\in [t_j-\delta/k,t_j+\delta/k]$,
we define
\[
\theta_k(t):=y_j+\frac{t-t_j+\delta/k}{(2\delta)/k}(y_{j+1}-y_j);
\]
thus $\theta_k|_{[t_j-\delta/k,t_j+\delta/k]}$
is a restriction of the unique affine-linear map
taking $t_j-\delta/k$ to $y_j$ and $t_j+\delta/k$ to $y_{j+1}$.
Note that the image of $\theta_k$ is contained in the convex hull
$C$ of $\eta([a,b])$ and hence in the convex hull of~$\gamma([a,b])$.
As $k\to\infty$, we have $\theta_k(t)\to\eta(t)$
for all $t\in [a,b]\setminus\{t_0,\ldots, t_n\}$
and thus for almost all $t\in [a,b]$.
Since~$C$ is bounded, also $q(C)$ is bounded.
The constant function $g\colon [a,b]\to [0,\infty[$,
$t\mto 2\sup q(C)$ is $\lambda_1$-integrable
and $q(\eta(t)-\theta_k(t))\leq g(t)$
for all $t\in[a,b]$. Thus
\[
\|\eta-\theta_k\|_{\cL^1,q}=\int_a^bq(\eta(t)-\theta_k(t))\,dt\to 0
\]
as $k\to\infty$ by dominated convergence. Notably,
we find $k\in\N$ such that $\theta:=\theta_k$ satisfies
$\|\eta-\theta\|_{\cL^1,q}\leq\ve/2$. Then $\|\gamma-\theta\|_{\cL^1,q}\leq\ve$.
\end{proof}
\begin{rem}\label{appr-lcx}
If $E$ may not be a Fr\'{e}chet space,
but is an arbitrary locally convex space,
and $\gamma\colon [a,b]\to E$
is a piecewise continuous function,
then all conclusions of Lemma~\ref{approx-stair}
remain valid, with identical proof.\footnote{Note that $\zeta([\alpha,\beta])$
is compact and metrizable
(using \cite[Theorem~4.4.17]{Eng}),
whence $\zeta([\alpha,\beta])$
and its subsets are metrizable and separable,
for all real numbers $\alpha<\beta$
and each continuous function $\zeta\colon [\alpha,\beta]\to E$.
Hence $\gamma-\eta$ and $\gamma-\theta$
are Borel measurable also in the current
variant of Lemma~\ref{approx-stair},
and so are the functions
$\gamma-\gamma_1$, $\gamma_1-\gamma_2$, $\gamma_2-\eta$,
and $\eta-\theta_k$ in its proof (by Lemma~\ref{compo-prod}).}
\end{rem}
We shall use the following concept (cf.\ \cite[Definition~14.3]{SEM}).\footnote{If $R\geq 1$
there, we may replace $R$ with~$1$; if $R\leq 1$, replace
$p$ with $p/R$ and $R$ with $1$.}
\begin{defn}\label{locmu}
A Lie group $G$ modelled on a locally convex space~$E$
is called \emph{locally $\mu$-convex}
if there exists a chart $\phi\colon U_\phi\to V_\phi\sub E$ of~$G$
with $e\in U_\phi$ and $\phi(e)=0$ with the following property:
For each continuous seminorm~$q$ on~$E$,
there exist a continuous seminorm~$p$ on~$E$
such that
\[
g_1g_2\cdots g_n\in U_\phi\quad\mbox{and}\quad
q(\phi(g_1\cdots g_n))\leq\sum_{j=1}^np(\phi(g_j))
\]
for all $n\in \N$ and all $g_1,\ldots, g_n\in U_\phi$ such that
$\sum_{j=1}^np(\phi(g_j))< 1$.
\end{defn}
Then every chart taking $e$ to $0$ has this property
(see \cite[Remark 14.4]{SEM}).\\[2.3mm]
Let $X$ be a set.
If $\gamma\colon [0,a]\to X$ and $\eta\colon [0,b]\to X$ are
staircase functions,
we write $\gamma*\eta\colon [0,a+b]\to X$ for the concatenation
defined via $t\mto\gamma(t)$ for $t\in [0,a]$,
$t\mto \eta(t-a)$ for $t\in\,]a,a+b]$.
\begin{la}\label{Trotter-use}
Let $G$ be a $C^0$-regular Lie group modelled on a locally convex space,
with Lie algebra
$\cg$. Let $m\in\N$, $t_1,\ldots, t_m\in \,]0,\infty[$,
$T:=t_1+\cdots+t_m$,
$v_1,\ldots, v_m\in\cg$, and $W$ be a neighbourhood
of
\[
g:=\exp_G(t_1v_1+\cdots+ t_mv_m)
\]
in~$G$. Then there exists a staircase function\footnote{If
$\gamma(t)=(\gamma_1*\cdots*\gamma_k)(t)$ except for finitely
many $t$, with constant functions
$\gamma_1,\ldots,\gamma_k$, then
$\evol(\gamma):=\evol(\gamma_1)\cdots\evol(\gamma_k)$.
See Remark~\ref{basics-pw-evol} for more details.}
$\gamma\colon [0,T]\to \cg$ with $\gamma([0,T])\sub\{v_1,\ldots,v_m\}$
such that $\evol(\gamma)\in W$.
\end{la}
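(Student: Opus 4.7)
The plan is to realise $g$ as a limit of explicit finite products via the Trotter product formula for $C^0$-regular Lie groups and then exhibit staircase functions whose evolutions are precisely those products.

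First, I would recall the Trotter product formula valid for $C^0$-regular Lie groups (cited in the paper as coming from \cite{MeR}, cf.\ \cite{Tro}): for any $v,w\in\cg$ one has
\[
\exp_G(v+w) \;=\; \lim_{n\to\infty}\bigl(\exp_G(v/n)\exp_G(w/n)\bigr)^n,
\]
and by an easy induction on $m$ this extends to
\[
\exp_G(t_1 v_1+\cdots+t_m v_m)
\;=\;\lim_{n\to\infty}\Bigl(\exp_G(t_1 v_1/n)\cdots\exp_G(t_m v_m/n)\Bigr)^{\!n}.
\]
Since $W$ is a neighbourhood of $g$, this gives an integer $n\in\N$ such that
\[
h_n:=\Bigl(\exp_G(t_1 v_1/n)\cdots\exp_G(t_m v_m/n)\Bigr)^{\!n}\in W.
\]

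Next, I would construct a staircase function $\gamma\colon [0,T]\to\cg$ whose evolution is exactly $h_n$. Partition $[0,T]$ into $n$ consecutive blocks each of length $T/n$; subdivide the $k$-th block into $m$ consecutive subintervals of lengths $t_1/n,\ldots,t_m/n$, and let $\gamma$ take the constant value $v_j$ on the $j$-th subinterval of each block (its value at the finitely many breakpoints is irrelevant). Then $\gamma([0,T])\sub\{v_1,\ldots,v_m\}$, and applying Remark~\ref{evolstaircase} (together with the concatenation convention for $\evol$ noted in the footnote of the statement) yields
\[
\evol(\gamma)
\;=\;\prod_{k=1}^{n}\prod_{j=1}^{m}\exp_G(t_j v_j/n)
\;=\;h_n\in W,
\]
which is the claim.

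There is essentially no hidden obstacle; the only point requiring a little care is to ensure that the formula of Remark~\ref{evolstaircase} legitimately applies to this particular $\gamma$ (which is immediate since $\gamma$ is a staircase function with the specified constant values on the open subintervals of the chosen partition) and that the Trotter formula in the form needed is available for arbitrary $m$, which is a straightforward induction from the two-factor version guaranteed by $C^0$-regularity.
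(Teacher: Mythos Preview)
Your overall strategy---approximate $g$ by a finite product of exponentials via Trotter and then read off a staircase function whose evolution equals that product---is exactly the paper's strategy, and the construction of~$\gamma$ from the product is fine. The difficulty is entirely in the step you flag as ``a straightforward induction'': deducing the $m$-factor Trotter formula
\[
\exp_G(t_1v_1+\cdots+t_mv_m)=\lim_{n\to\infty}\bigl(\exp_G(t_1v_1/n)\cdots\exp_G(t_mv_m/n)\bigr)^n
\]
from the two-factor version. This is not a formal induction. Knowing $\exp_G(v+w)=\lim_n(\exp_G(v/n)\exp_G(w/n))^n$ with $v=t_1v_1+\cdots+t_{m-1}v_{m-1}$ and $w=t_mv_m$, you would next have to replace each factor $\exp_G(v/n)$ by an $(m{-}1)$-fold product, which introduces a second limit (in a new parameter~$k$); collapsing the double limit to the single diagonal limit you assert requires uniformity that the bare two-factor statement does not provide. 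The references \cite{MeR}, \cite{Tro} cited in the paper supply the two-factor (indeed, strong) Trotter property, but not the $m$-factor limit in the form you use.

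The paper circumvents this by inducting on the \emph{statement of the lemma} rather than on the Trotter formula. At the inductive step one applies the two-factor Trotter formula only once, to $v:=t_1v_1+\cdots+t_{m-1}v_{m-1}$ and $t_mv_m$, to get some fixed $n$ with $(\exp_G(v/n)\exp_G(t_mv_m/n))^n\in W$. Then continuity of the map $u\mapsto (u\exp_G(t_mv_m/n))^n$ gives an open neighbourhood $U$ of $\exp_G(v/n)$ all of whose elements work; the inductive hypothesis (for $m{-}1$, with target~$U$ and data $t_j/n$, $v_j$) now furnishes a staircase function $\eta$ with values in $\{v_1,\ldots,v_{m-1}\}$ and $\evol(\eta)\in U$, and concatenating $n$ copies of $\eta*\theta$ (with $\theta$ constant equal to~$v_m$) finishes. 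This produces exactly the kind of staircase function you describe, but without ever needing the $m$-factor limit to exist along the diagonal. If you want to keep your write-up, replace the appeal to the $m$-factor formula by this inductive use of continuity; otherwise you must supply a genuine argument for the $m$-factor Trotter limit in this generality.
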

\begin{proof}
The proof is by induction on $m\in\N$.
The case $m=1$ is trivial, as $\exp_G(t_1v_1)=\evol(\gamma)$
for the constant function $\gamma\colon [0,t_1]\to\cg$, $t\mto v_1$,
which is a staircase function. Now let $m>1$ and assume
the assertion holds for $m-1$ in place of~$m$.
Abbreviate $v:=t_1v_1+\cdots+t_{m-1}v_{m-1}$.
Since $G$ is $C^0$-regular, it is locally $\mu$-convex
(see \cite{Han}), whence $G$ satisfies the strong Trotter
property formulated in~\cite[p.\,7]{MeR} (see \cite{Tro}).\footnote{For the case
of $L^1$-regular Fr\'{e}chet-Lie groups
considered in Theorem~\ref{thmC}, the strong Trotter property was
already established in~\cite[Theorem~I]{MeR}.}
In particular, the Trotter product formula holds
for each pair of Lie algebra elements, and thus
\[
g=\exp_G(v+t_mv_m)=\lim_{n\to\infty}\Big(\exp_G(v/n)\exp_G(t_mv_m/n)\Big)^n.
\]
We therefore find $n\in\N$ such that
\[
\Big(\exp_G(v/n)\exp_G(t_mv_m/n)\Big)^n\in W.
\]
Now $\exp_G(t_mv_m/n)=\evol(\theta)$
for the constant function $\theta\colon [0,t_m/n]\to\cg$, $t\mto v_m$.
There exists an open neighbourhood~$U$ of $\exp_G(v/n)$ in~$G$ such that
\[
(u\evol(\eta))^n\in W\quad\mbox{for all $\,u\in U$.}
\]
Since $\exp_G(v/n)=\exp_G(t_1v_1/n+\cdots+t_{m-1}v_{m-1}/n)\in U$,
we have
\[
\evol(\eta)\in U
\]
for a staircase function $\eta\colon [0,(T-t_m)/n]\to\cg$
with image in $\{v_1,\ldots, v_{m-1}\}$, by induction.
If we define
\[
\gamma:=(\eta*\theta)*\cdots *(\eta*\theta)
\]
as the concatenation of $n$ copies of $\eta*\theta$,
then
\[
\evol(\gamma)=(\evol(\eta)\evol(\theta))^n\in W.
\]
Moreover, $\gamma$ is a staircase function
with image in $\{v_1,\ldots,v_m\}$.
\end{proof}
\noindent
{\bf Proof of Theorem~\ref{thmC}.}
(b) implies (a) since $\cL^p([0,T],\cg)\sub\cL^1([0,T],\cg)$.
Likewise,
(d) implies (c) as staircase functions are piecewise
continuous, and (c) implies~(b)
as piecewise continuous functions $[0,T]\to\cg$
are in $\cL^p([0,T],\cg)$.\\[2.3mm]
(a) implies (d): Let $\gamma\in \cL^1([0,T],\cg)$
such that $x_0.\evol(\gamma)\in U$.
Since $\evol\colon \cL^1([0,T],\cg)\to G$
and the action $M\times G\to M$ are continuous,
there exists $\ve>0$ and a continuous seminorm~$P$ on
$\cg$ such that
\[
x_0.\evol(\eta)\in U \mbox{ for all $\eta\in \cL^1([0,T],\cg)$ such that
$\|\gamma-\eta\|_{\cL^1,P}<\ve$.}
\]
By Lemma~\ref{approx-stair}(a), such an $\eta$ can be chosen as a staircase
function with values in $\gamma([0,T])\sub S$.\\[2.3mm]
Now assume that~$S$ is convex.\\[2.3mm]
(a) implies (e): For $\gamma$, $\ve$ and $P$ as before,
Lemma~\ref{approx-stair}(b) provides a continuous
function $\theta\colon [0,T]\to \cg$ such that $\|\gamma-\theta\|_{\cL^1,P}<\ve$
(whence $x_0.\evol(\theta)\in U$) and $\theta([0,T])\sub\conv\gamma([0,T])
\sub S$.\\[2.3mm]
(e) implies (c) as each continuous function $[0,T]\to\cg$
is piecewise continuous.\\[2.3mm]
Now assume that~$S$ is convex and $\conv(\ex(S))$
is dense in~$S$.\\[2.3mm]
(d) implies~(f): Let $\gamma\colon [0,T]\to S$ be a staircase
function with $x_0.\evol(\gamma)\in U$.
Let $V$ be an open neighbourhood of $\evol(\gamma)$ in~$G$
such that $x_0.V\sub U$.
There are $0=t_0<t_1<\cdots< t_\ell=T$ such that
$\gamma|_{]t_{j-1},t_j[}$ is a constant function with
value $w_j\in S$ for all $j\in\{1,\ldots, \ell\}$.
The function
\[
\phi\colon \cg^\ell\to \cL^1([0,T],\cg),\;
u=(u_1,\ldots, u_\ell)\mto u_\ell\one_{[t_{\ell-1},t_\ell]}+\sum_{j=1}^{\ell-1} u_j\one_{[t_{j-1},t_j[}
\]
is linear and continuous as $\|\phi(u)\|_{\cL^1,P}\leq T\max\{P(u_1),\ldots,P(u_\ell)\}$
for each continuous seminorm~$P$ on~$\cg$.
Thus there exists an open neighbourhood $W$ of $(w_1,\ldots,w_\ell)$
in $\cg^\ell$ such that $\evol(\phi(W))\sub V$.
Since $(\conv(\ex(S)))^\ell$ is dense in~$S^\ell$ and $(w_1,\ldots, w_\ell)\in S^\ell$,
we find $u=(u_1,\ldots,u_\ell)\in(\conv(\ex(S)))^\ell\cap W$.
After replacing $\gamma$ with $\phi(u)$,
we may assume that $\gamma([0,T])=\{w_1,\ldots,w_\ell\}\sub\conv (\ex(S))$.
Since
\[
\evol(\gamma)=\exp_G((t_1-t_0)w_1)\cdots\exp_G((t_\ell-t_{\ell-1})w_\ell)\in V,
\]
we find open neighbourhoods $W_j$ of $\exp_G((t_j-t_{j-1})w_j)$ in~$G$
for $j\in\{1,\ldots,\ell\}$ such that
\[
W_1W_2\cdots W_\ell\sub V.
\]
For each $j\in \{1,\ldots,\ell\}$,
we have
\[
w_j=\sum_{i=1}^{m_j}t_{j,i}v_{j,i}
\]
for suitable $m_j\in\N$, $v_{j,i}\in \ex(S)$ and $t_{j,i}>0$
for $i\in\{1,\ldots, m_j\}$ with $\sum_{i=1}^{m_j}t_{j,i}=1$.
By Lemma~\ref{Trotter-use},
for each $j\in\{1,\ldots, \ell\}$,
there is a staircase function $\gamma_j\colon [0,t_j-t_{j-1}]\to\cg$
with image in $\{v_{j,1},\ldots, v_{j,m_j}\}\sub\ex(S)$
such~that
\[
\evol(\gamma_j)\in W_j.
\]
Then $\eta:=\gamma_1*\cdots*\gamma_\ell\colon [0,T]\to\cg$
is a staircase function with values in $\ex(S)$ such that
$\evol(\eta)=\evol(\gamma_1)\cdots\evol(\gamma_\ell)
\in W_1\cdots W_\ell\sub V$
and thus $x_0.\evol(\eta)\in U$.\\[2.3mm]
(f) implies (d): This is trivial. $\,\square$.
\section{Continuity of evolution in the {\boldmath$L^1$}-topology}\label{sec-han}
For our next theorem, we need terminology
from~\cite{MeR}.
We recall:
\begin{numba}\label{def-bifun}
Assume that,
for each Fr\'{e}chet space $E$ and real numbers $a<b$,
a vector subspace $\cE([a,b],E)$ of $L^1([a,b],E)$
has been chosen, together with a locally convex vector topology on $\cE([a,b],E)$
such that the inclusion map $\cE([a,b],E)\to L^1([a,b],E)$
is continuous. Following
\cite[Definition 3.1]{MeR}, we call $\cE$ a \emph{bifunctor
on Fr\'{e}chet spaces} if (a) and (b) hold:
\begin{itemize}
\item[(a)]
For each continuous linear map $\lambda\colon E_1\to E_2$
between Fr\'{e}chet spaces,
we have $[\lambda\circ\gamma]\in \cE([a,b],E_2)$
for all $a <b$ and $[\gamma] \in \cE([a,b],E_1)$,
and the linear map $\cE([a,b],\lambda)\colon
\cE([a,b],E_1) \to\cE([a,b],E_2)$, $[\gamma]\mto[\lambda\circ \gamma]$
is continuous.
\item[(b)]
For each Fr\'{e}chet space $E$, real numbers $a<b$ and $c<d$
and each mapping $f \colon [c,d]\to [a,b]$
which is the restriction of a strictly increasing affine-linear map
$\R\to\R$,
we have $[\gamma\circ f]\in\cE([c,d],E)$ for each $[\gamma]\in\cE([a,b],E)$
and the linear map $\cE(f,E)\colon\cE([a,b],E)\to\cE([c,d],E)$,
$[\gamma]\mto[\gamma\circ f]$
is continuous.
\end{itemize}
\end{numba}
\begin{numba}\label{intecompl}
Recall that a locally convex space~$E$
is called \emph{sequentially complete}
(or also: an \emph{sclc-space},
for short) if every Cauchy sequence in~$E$
is convergent in~$E$.
We say that~$E$ is \emph{integral
complete} if the weak integral
$\int_0^1\gamma(t)\,dt$ exists in~$E$
for each continuous function
$\gamma\colon [0,1]\to E$.
It is known (cf.\ \cite{Wei})
that~$E$ is integral complete
if and only if~$E$
has the \emph{metric convex compactness
property} (metric CCP) discussed in~\cite{Voi}, requiring that
the closure of $\conv K$ in~$E$
be compact for each compact, metrizable subset $K\sub E$.
Sequential completeness
implies integral completeness, but not conversely
(cf.\ \cite{Voi}).
The modelling space of every $C^0$-regular
Lie group is integral complete (see \cite[Theorem~C\,(a)]{SEM}).
\end{numba}
\begin{numba}\label{theFEP}
Following \cite[Definition 1.38]{MeR},
we say that a locally convex space~$E$ has the \emph{Fr\'{e}chet exhaustion property}
(FEP) if every separable closed vector subspace $S\sub E$
is a union of vector subspaces $F_1\sub F_2\sub\cdots$
of~$E$ which are Fr\'{e}chet spaces in the induced topology.
All Fr\'{e}chet spaces, strict (LF)-spaces,
and locally convex direct sums of Fr\'{e}chet
spaces are (FEP)-spaces,
as well as their closed vector subspaces.
Bifunctors on sequentially complete (FEP)-spaces
are defined as in \ref{def-bifun}, replacing
Fr\'{e}chet spaces with sequentially complete (FEP)-spaces.
\end{numba}
\begin{numba}\label{theLRC}
If $E$ is a locally convex space, then the vector space $\cL^\infty_{rc}([a,b],E)$
of all Borel measurable functions $\gamma\colon [a,b]\to E$
can be considered such that the closure of $\gamma([a,b])$ in~$E$
is compact and metrizable,
and the corresponding space $L^\infty_{rc}([a,b],E)$
of equivalence classes (see \cite{Mea}).
Bifunctors on integral complete locally convex spaces
are defined as in \ref{def-bifun}, replacing
Fr\'{e}chet spaces with integral complete locally
convex spaces and each symbol $L^1$ with $L^\infty_{rc}$.
\end{numba}
\begin{numba}
Let $\cE$ be a bifunctor on Fr\'{e}chet spaces
$($resp., on sequentially complete
{\rm(FEP)}-spaces, resp.,
on integral complete locally convex spaces$)$
and~$E$ be such a locally convex space.\footnote{For example,
$\cE=L^p$ with $p\in [1,\infty]$ in cases 1 or 2;
or $\cE=L^\infty_{rc}$
in all three cases.}
Given real numbers $a<b$, a function
$\eta\colon [a,b]\to E$ is called a $AC_\cE$
if there exists $[\gamma]\in \cE([a,b],E)$ such that
\begin{equation}\label{absviaint}
\eta(t)=\eta(a)+\int_a^t\gamma(s)\,ds\quad
\mbox{for all $t\in[a,b]$,}
\end{equation}
where the integrals
exist as weak integrals in~$E$, i.e.,
$\lambda(\eta(t))=\lambda(\eta(a))+\int_a^t\lambda(\gamma(s))\,ds$
for all $\lambda\in E'$.
Then $[\gamma]\in\cE([a,b],E)$ is uniquely determined
by~$\eta$ (see \cite{MeR}), and we write $\eta':=[\gamma]$.
\end{numba}
To avoid clumsy formulations, in the following proof
square brackets will frequently be omitted; it will be clear from
the context whether $\gamma$ or $[\gamma]$ is intended.
Notably, we may write $\eta'=\gamma$ for a representative
$\gamma$ of $[\gamma]$.
\begin{thm}\label{L1cont}
Let $\cE$ be a bifunctor on Fr\'{e}chet spaces
$($resp., on sequentially complete
{\rm(FEP)}-spaces, resp.,
on integral complete locally convex spaces$)$
which satisfies the locality axiom, the pushforward
axioms, and such that smooth functions acts smoothly
on $AC_\cE$.
If $\cE$ satisfies the subdivision axiom, then
the evolution map
\[
\Evol\colon \cE([0,1],\cg)\to C([0,1],G)
\]
is continuous with respect to the $L^1$-topology
on $\cE([0,1],\cg)$,
for each Lie group~$G$ modelled
on a locally convex space of the preceding form
such that $G$ is $\cE$-regular.
\end{thm}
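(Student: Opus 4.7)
\textbf{Proof plan for Theorem~\ref{L1cont}.} The strategy is to adapt \cite[Lemma~14.9]{SEM}, which treats the case $\cE=C$. The essential input is local $\mu$-convexity of~$G$: since $\cE$-regularity implies $C^0$-regularity, and since every $C^0$-regular Lie group is locally $\mu$-convex by Hanusch~\cite{Han}, we may fix a chart $\phi\colon U_\phi\to V_\phi$ of~$G$ centred at~$e$ with the property from Definition~\ref{locmu}: for each continuous seminorm~$q$ on the modelling space there is a continuous seminorm~$p$ with $q(\phi(g_1\cdots g_n))\le\sum_{j=1}^n p(\phi(g_j))$ whenever $g_1,\ldots,g_n\in U_\phi$ and $\sum_jp(\phi(g_j))<1$.

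Fix $\gamma_0\in\cE([0,1],\cg)$ and set $\eta_0:=\Evol(\gamma_0)\in C([0,1],G)$. Since $\eta_0([0,1])$ is compact, I would choose a subdivision $0=t_0<t_1<\cdots<t_N=1$ fine enough that $\eta_0(t_{i-1})^{-1}\eta_0([t_{i-1},t_i])\sub U_\phi$ for each~$i$. The subdivision axiom lets us view $\gamma|_{[t_{i-1},t_i]}$ as an element of $\cE([t_{i-1},t_i],\cg)$ continuously in the $L^1$-topology, and the pushforward axioms together with the hypothesis that smooth functions act smoothly on $AC_\cE$ justify the product decomposition
\[
\Evol(\gamma)(t)=\eta_0(t)\cdot\wt\eta(t)\quad\mbox{for $t\in[t_{i-1},t_i]$,}
\]
where $\wt\eta:=\Evol(\Ad(\eta_0^{-1})(\gamma-\gamma_0))$ is the evolution of the conjugated perturbation on the subinterval with initial value~$e$. (Here the map $\delta\mto \Ad(\eta_0^{-1})\delta$ is continuous in the $L^1$-topology since $\eta_0$ is continuous on the compact interval and the pushforward axioms transport this to~$\cE$.) In this way the global problem reduces to a uniform local claim: for each subinterval $[a,b]$, each continuous seminorm $q$ and each $\ve>0$, there is an $L^1$-neighbourhood $\cN$ of $0$ in $\cE([a,b],\cg)$ such that $\Evol(\delta)(t)\in U_\phi$ with $q(\phi(\Evol(\delta)(t)))<\ve$ for all $\delta\in\cN$ and $t\in[a,b]$.

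The local claim is then established using staircase approximation combined with the $\mu$-convexity inequality. Given $\delta\in\cE([a,b],\cg)$ with $\|\delta\|_{L^1,p}$ small, Lemma~\ref{approx-stair} yields a staircase function $\xi$ with $\|\xi\|_{L^1,p}$ still small and $\|\delta-\xi\|_{L^1,p}$ as small as desired; by Remark~\ref{evolstaircase}, $\evol(\xi)$ is a product $\exp_G(s_1w_1)\cdots\exp_G(s_kw_k)$ with $\sum_j s_jp(w_j)=\|\xi\|_{L^1,p}$, so local $\mu$-convexity bounds $q(\phi(\evol(\xi)))$ directly by $\|\xi\|_{L^1,p}$. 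Applying the same argument on every sub-subinterval $[a,t]$ (rather than just~$[a,b]$) and taking the supremum over~$t$ controls $\Evol(\xi)$ uniformly. The main obstacle, and the place where the axioms on~$\cE$ are genuinely needed, is passing from staircase inputs~$\xi$ back to general $\delta\in\cE$: $L^1$-closeness does not imply $\cE$-closeness, so the smoothness (hence continuity) of $\Evol$ in the $\cE$-topology from $\cE$-regularity cannot be invoked directly. This is handled by iterating the $\mu$-convex product estimate on the concatenation of short pieces on which the $L^1$-mass of $\delta-\xi$ is individually small, exploiting the locality and subdivision axioms to realize such piecewise approximations within~$\cE$; this transfer step, which replaces the straightforward compactness argument available in the $C^0$-case of \cite{SEM}, is the technically heaviest part of the proof.
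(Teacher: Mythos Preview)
Your reduction from a general base point $\gamma_0$ to continuity at~$0$ is sound and essentially matches the paper (which phrases it via the group structure $[\gamma]\odot[\eta]:=[\Ad(\Evol([\eta]))^{-1}.\gamma]+[\eta]$ on $\cE([0,1],\cg)$ and $L^1$-continuity of the right translations~$\rho_{[\eta]}$; your explicit product formula is the same identity unpacked).

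The gap is in your treatment of continuity at~$0$. The staircase approximation creates a circularity you do not resolve: once you know $q(\phi(\Evol(\xi)(t)))\lesssim\|\xi\|_{L^1,p}$ for staircase~$\xi$, you still need to compare $\Evol(\delta)$ with $\Evol(\xi)$ for general $\delta\in\cE$ that is merely $L^1$-close to~$\xi$. But ``$L^1$-close inputs have uniformly close evolutions'' is exactly the statement you are proving, and your description of ``iterating the $\mu$-convex product estimate on short pieces where the $L^1$-mass of $\delta-\xi$ is small'' does not break the loop: on each such piece you are again comparing the evolution of a non-staircase function to that of a staircase one, with only $L^1$-control.

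The paper avoids this entirely. It never approximates by staircases. Instead it uses $\cE$-regularity once, in the $\cE$-topology, to fix an $\cE$-open $0$-neighbourhood $W\sub\cE([0,1],\cg)$ with $\Evol(W)\sub C([0,1],\phi^{-1}(N))$ for a suitable small chart neighbourhood~$N$. On~$W$ one has the direct integral estimate
\[
p\big(\phi(\Evol(\eta)(t))\big)=p\!\left(\int_0^t d_2\mu(\theta(s),0,\eta(s))\,ds\right)\le\int_0^t P(\eta(s))\,ds\le\|\eta\|_{L^1,P},
\]
coming from the ODE $\theta'=d_2\mu(\theta,0,\eta)$ satisfied by $\theta:=\phi\circ\Evol(\eta)$ and a bound $p(d_2\mu(x,0,y))\le P(y)$ valid for $x\in N$. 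This is the step your argument is missing. The subdivision axiom is then used not to ``realize piecewise approximations'' but to chop an arbitrary $\gamma$ with $\|\gamma\|_{L^1,P}<1$ into rescaled pieces $\gamma_{n,k}$ that all lie in the $\cE$-open set~$W$; the integral estimate applies to each piece, the $L^1$-norms of the pieces sum to $\|\gamma\|_{L^1,P}<1$, and local $\mu$-convexity assembles the product $\Evol(\gamma)(t)=\evol(\gamma_{n,0})\cdots\evol(\gamma_{n,k-1})\Evol(\gamma_{n,k})(nt-k)$ into the desired uniform bound. Replace your staircase step by this integral estimate on~$W$ and the proof goes through.
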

{\bf Proof of Theorem~\ref{L1cont}.}
Let $U\sub G$ be an open identity neighbourhood.
After shrinking~$U$, we may assume that there exists
a $C^\infty$-diffeomorphism
$\phi\colon U\to V$ onto an
open $0$-neighbourhood $V\sub \cg$ such that $\phi(e)=0$
and $d\phi|_\cg=\id_\cg$.
Let $q$ be a continuous seminorm on~$\cg$.
Since $G$ is $\cE$-regular, it is $C^0$-regular
(see \cite[Corollaries 5.21 and 5.22]{MeR}) and hence locally
$\mu$-convex (see \cite{Han}).
Thus, we find a continuous seminorm~$p$ on~$\cg$
such that
\begin{equation}\label{herelocm}
g_1\cdots g_n\in U\quad\mbox{and}\quad
q(\phi(g_1\cdots g_n))\leq\sum_{j=1}^np(\phi(g_j))
\end{equation}
for all $n\in \N$ and all $g_1,\ldots, g_n\in U$ such that
$\sum_{j=1}^np(\phi(g_j))< 1$.
Let $Y\sub U$ be an open identity neighbourhood
such that $YY\sub U$. Then $Z:=\phi(Y)\sub V$
is an open $0$-neighbourhood.
The map
\[
\mu\colon Z\times Z\to V,\quad
(x,y)\mto\phi(\phi^{-1}(x)\phi^{-1}(y))
\]
is smooth.
There exist an open $0$-neighbourhood $N\sub Z$
and a continuous seminorm~$P$ on~$\cg$
such that
\[
d_2\mu(N\times \{0\}\times B^P_1(0))\sub B^p_1(0),
\]
entailing that
\[
p(d_2\mu(x,0,y))\leq P(y)\quad\mbox{for all $x\in N$ and $y\in \cg$.}
\]
By continuity of $\Evol\colon\cE([0,1],\cg)\to C([0,1],G)$,
there exists an open $0$-neighbourhood
$W\sub \cE([0,1],\cg)$ such that
\[
\Evol(W)\sub C([0,1],\phi^{-1}(N)).
\]
For $\eta\in W$, we deduce that $\theta:=\phi\circ \Evol(\eta)$
satisfies
\[
\theta(t)=\int_0^t\theta'(s)\,ds
=\int_0^td_2\mu(\theta(s),0,\eta(s))\,ds
\]
for all $t\in [0,1]$, whence
\begin{equation}\label{withgammk}
p(\theta(t))\leq\int_0^tp(d_2\mu(\theta(s),0,\eta(s)))\,ds
\leq\int_0^tP(\eta(s))\,ds
\leq\|\eta\|_{L^1,P}.
\end{equation}
Now let $\gamma\in \cE([0,1],\cg)$
such that $\|\gamma\|_{L^1,P}<1$.
By the subdivision property,
there exists $n\in\N$ such that
$\gamma_{n,k} \in W$ for all $k\in\{0,1,\ldots, n-1\}$,
where
\[
\gamma_{n,k}\colon [0,1]\to\cg,\quad
t\mto \frac{1}{n}\gamma((k+t)/n)
\]
(see \cite[Definition~5.24]{MeR}). We fix~$n$. Note that
\[
\sum_{k=0}^{n-1}\|\gamma_{n,k}\|_{L^1,P}=\|\gamma\|_{L^1,P}<1.
\]
For $k\in\{0,\ldots, n\}$ and $t\in[0,1]$, we get
$\eta_n\!:=\!\Evol(\gamma_{n,k})\!\in\! C([0,1],\phi^{-1}(N))$~and
\[
p(\phi(\eta_n(t)))\leq\|\gamma_{n,k}\|_{L^1,P},
\]
by (\ref{withgammk}).
Given $t\in [0,1]$, there is $k\in\{0,\ldots,n-1\}$
with $t\in [k/n,(k+1)/n]$.
By (\ref{herelocm}), we have
\[
\Evol(\gamma)(t)=\evol(\gamma_{n,0})\cdots\evol(\gamma_{n,k-1})\Evol(\gamma_{n,k})(nt-k)
\in U,
\]
since $g_j:=\evol(\gamma_{n,j-1})$ for $j\in\{1,\ldots,k\}$
and $g_{k+1}:=\Evol(\gamma_{n,k})(nt-k)$ are elements of~$\phi^{-1}(N)\sub U$
which satisfy
\[
\sum_{j=1}^{k+1}p(\phi(g_j))\leq\sum_{j=0}^k\|\gamma_{n,j}\|_{L^1,P}
<1.
\]
Thus $\Evol(\gamma)\in C([0,1],U)$ for all $\gamma\in\cE([0,1],\cg)$
such that $\|\gamma\|_{L^1,P}<1$, showing that $\Evol\colon\cE([0,1],\cg)\to C([0,1],G)$
is continuous at~$0$ with respect to the $L^1$-topology.\\[2.3mm]
Since $G$ is $\cE$-regular,
$\cE([0,1],\cg)$ can be made a group with neutral element~$0$
and group multiplication given by
\[
[\gamma]\odot[\eta]:=[\Ad(\Evol([\eta]))^{-1}.\gamma]+[\eta]
\]
(see \cite[Definition~5.34]{MeR}).
In view of~\cite[Lemma~5.10]{MeR},
the right translation
\[
\rho_{[\eta]}\colon \cE([0,1],\cg)\to\cE([0,1],\cg),\quad
[\gamma]\mto [\gamma]\odot [\eta]
\]
is continuous with respect to the $L^1$-topology on
both sides
%
%
and hence a homeomorphism,
for each $[\eta]\in\cE([0,1],\cg)$.
For $\beta:=\Evol([\eta])$, the right translation
\[
\rho_\beta\colon C([0,1],G)\to C([0,1],G),\quad
\zeta\mto \zeta\beta
\]
is continuous. Since $\Evol$ is continuous at~$0$, we deduce from
\[
\Evol=\rho_\beta\circ \Evol\circ \, \rho_{[\eta]^{-1}}
\]
that the map $\Evol\colon \cE([0,1],\cg)\to C([0,1],G)$
is continuous at~$[\eta]$ with respect to the $L^1$-topology
on its domain.$\,\square$
\begin{rem}\label{basics-pw-evol}
Let $G$ be a $C^0$-regular Lie group modelled on a locally
convex space
and $PC([0,1],\cg)$ be the space of piecewise continuous
$\cg$-valued functions on~$[0,1]$.
Given $\gamma\in PC([0,1],\cg)$,
let $0=t_0<\cdots<t_m=1$ be a subdivision of
$[0,1]$ such that $\gamma|_{]t_{j-1},t_j[}$
has a continuous extension $\gamma_j\colon [t_{j-1},t_j]\to\cg$
for all $j\in\{1,\ldots, m\}$.
Given $t\in [0,1]$, there is $j\in\{1,\ldots,m\}$
such that $t\in [t_{j-1},t_j]$.
We define
\[
\Evol(\gamma)(t):=\evol(\gamma_1)\cdots\evol(\gamma_{j-1})\Evol(\gamma_j)(t),
\]
using evolution maps $\evol$ with domain $C([t_{i-1},t_i],\cg)$
for $i\in\{1,\ldots,j-1\}$ and $\Evol$
with domain $C([t_{j-1},t_j],\cg)$
on the right-hand side.
Then $\Evol(\gamma)\in C([0,1],G)$ is well defined, independent
of the choice of subdivision,\footnote{It is unchanged if we
add one point to a subdivision, and hence under passage to joint
refinements of two given subdivisions.} and we
obtain a map
\begin{equation}\label{pwevol}
\Evol\colon PC([0,1],\cg)\to C([0,1],G).
\end{equation}
It is known from the work of Hanusch that
the evolution map (\ref{pwevol})
is continuous with respect to the $\cL^1$-topology\footnote{By definition, this is the
(not necessarily Hausdorff)
locally convex vector topology defined
by the seminorms $PC([0,1],\cg)\to[0,\infty[$,
$\gamma\mto\|\gamma\|_{\cL^1,q}$
for $q$ in the set of all continuous
seminorms on $\cg$.}
on its domain $PC([0,1],\cg)$.
This can be shown as in the previous proof,
with the modification that the subdivisions used to define
the $\gamma_{n,k}$ need not be equidistant but include the
points $t_0<\cdots<t_m$ just considered (and hence
the points of discontinuity of~$\gamma$), to ensure that
the $\gamma_{n,k}$ are continuous functions;
moreover, the subdivisions need to be chosen such that
the mesh tends to $0$ for $n\to\infty$.
\end{rem}
Using an affine-linear reparametrization,
we can replace $[0,1]$ with $[a,b]$ for any $a<b$
in the preceding results concerning evolution maps
(cf.\ \ref{repara}).\\[2.3mm]
{\bf Proof of Theorem~\ref{thmD}.}
Theorem~\ref{thmD}
becomes a special case of Theorem~\ref{L1cont}
if we consider $L^p$ as a bifunctor on
Fr\'{e}chet spaces. The required axioms and hypotheses
were verified in \cite{MeR}. $\,\square$
\section{Analogues using only {\boldmath$L^q$}-regularity}\label{sec-Lq}
{\bf Proof of Theorem~\ref{thmE}.}
(b) implies (a)$'$ since $\cL^p([0,T],\cg)\sub \cL^q([0,T],\cg)$.\\[2.3mm]
The implications ``(d)$\impl$(c)'', ``(c)$\impl$(b)'',
``(e)$\impl$(c)'' and ``(f)$\impl$(d)'' can be shown as in the proof
of Theorem~\ref{thmC}.\\[2.3mm]
(a)$'$ implies (d):
Let $\gamma\in \cL^q([0,T],\cg)$
such that $x_0.\evol(\gamma)\in U$.
Since $\evol\colon \cL^q([0,T],\cg)\to G$
is continuous with respect to the $\cL^1$-topology
(cf.\ Theorem~\ref{thmD})
and the action $M\times G\to M$ is continuous,
there exists $\ve>0$ and a continuous seminorm~$P$ on
$\cg$ such that
\[
x_0.\evol(\eta)\in U \mbox{ for all $\eta\in \cL^q([0,T],\cg)$ such that
$\|\gamma-\eta\|_{\cL^1,P}<\ve$.}
\]
By Lemma~\ref{approx-stair}(a), such an $\eta$ can be chosen as a staircase
function with values in $\gamma([0,T])\sub S$.\\[2.3mm]
The implication ``(a)$'${}$\impl$(e)''
can be shown like  ``(a)$\impl$(e)'' in the proof of Theorem~\ref{thmC}.\\[2.3mm]
The implication ``(d)$\impl$(f)''
can be shown as in the proof of Theorem~\ref{thmC},
except that we consider
$\phi$ as a map from $\cg^\ell$ to $\cL^q([0,T],\cg)$,
endowed with the $\cL^1$-topology. $\,\square$
\section{Analogues requiring only {\boldmath$C^0$}-regularity}\label{sec-C0}
For $C^0$-regular Lie groups,
we can say the following.
\begin{thm}\label{thmF}
Instead of requiring $L^1$-regularity,
let $G$ be a Lie group modelled
on a locally convex space
such that $G$ is $C^0$-regular
in the situation of Theorem~{\rm\ref{thmC}}.
Then conditions~{\rm(c)} and {\rm(d)} of
Theorem~{\rm\ref{thmC}} are equivalent.
If~$S$ is convex, then {\rm(d)} is equivalent to {\rm(e)}.
If~$S$ is convex and $\conv(\ex(S))$ is dense in~$S$,
then {\rm(d)} is equivalent to~{\rm(f)}.
\end{thm}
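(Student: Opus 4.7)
The plan is to mimic the proof of Theorem~\ref{thmC} almost verbatim, replacing every appeal to continuity of $\evol$ on $\cL^p([0,T],\cg)$ (which used $L^p$-regularity via Theorem~\ref{thmD}) with continuity of the piecewise-continuous evolution map
\[
\Evol\colon PC([0,T],\cg)\to C([0,T],G)
\]
in the $\cL^1$-topology on its domain. This latter continuity is available for every $C^0$-regular Lie group by Remark~\ref{basics-pw-evol}. Approximation of piecewise continuous functions by staircase functions or by continuous functions in the $\cL^1$-topology, with the required control on images, is supplied by the locally convex variant of Lemma~\ref{approx-stair} stated in Remark~\ref{appr-lcx}. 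The implications (d)$\impl$(c), (e)$\impl$(c), and (f)$\impl$(d) are immediate from the definitions.

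For (c)$\impl$(d), start with a piecewise continuous $\gamma\colon[0,T]\to\cg$ with $\im(\gamma)\sub S$ and $x_0.\evol(\gamma)\in U$. Continuity of $\evol$ on $PC([0,T],\cg)$ in the $\cL^1$-topology, combined with continuity of the action $M\times G\to M$, yields $\ve>0$ and a continuous seminorm $P$ on $\cg$ such that every $\eta\in PC([0,T],\cg)$ with $\|\gamma-\eta\|_{\cL^1,P}<\ve$ satisfies $x_0.\evol(\eta)\in U$. Remark~\ref{appr-lcx} applied to Lemma~\ref{approx-stair}(a) then produces a staircase $\eta$ with $\im(\eta)\sub\gamma([0,T])\sub S$ meeting the estimate. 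Assuming $S$ convex, (d)$\impl$(e) is obtained the same way, using the continuous-approximation variant from Lemma~\ref{approx-stair}(b): the resulting continuous $\theta$ takes values in $\conv(\gamma([0,T]))\sub S$.

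For (d)$\impl$(f), with $S$ convex and $\conv(\ex(S))$ dense in $S$, I would follow the corresponding step of the proof of Theorem~\ref{thmC} verbatim. Given a staircase $\gamma$ of constant value $w_j\in S$ on $\,]t_{j-1},t_j[$ with $x_0.\evol(\gamma)\in U$, the linear map $\phi\colon\cg^\ell\to PC([0,T],\cg)$ assembling values $(u_1,\ldots,u_\ell)$ into a staircase function lands inside $PC$ and is continuous into the $\cL^1$-topology, so continuity of $\Evol$ on $PC$ lets me perturb each $w_j$ into an element of $\conv(\ex(S))$ whose evolution still lies in a prescribed open neighbourhood $V$ of $\evol(\gamma)$ with $x_0.V\sub U$. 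Since Lemma~\ref{Trotter-use} is already formulated for $C^0$-regular Lie groups, it applies to each representation $w_j=\sum_i t_{j,i}v_{j,i}$ with $v_{j,i}\in\ex(S)$ to produce a staircase $\gamma_j$ on $[0,t_j-t_{j-1}]$ with image in $\ex(S)$ and $\evol(\gamma_j)$ in a prescribed neighbourhood of $\exp_G((t_j-t_{j-1})w_j)$. The concatenation $\eta=\gamma_1*\cdots*\gamma_\ell$ then satisfies $\im(\eta)\sub\ex(S)$ and $x_0.\evol(\eta)\in U$.

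The main point requiring care, rather than a genuine obstacle, is ensuring that every intermediate approximation remains inside $PC([0,T],\cg)$, so that continuity of $\Evol$ on $PC$ in the $\cL^1$-topology is applicable and no $\cL^1$-evolution outside $PC$ is ever invoked (which would require $L^1$-regularity). Since staircase and continuous functions both lie in $PC$, and Lemma~\ref{Trotter-use} outputs staircase functions, this is preserved throughout the argument.
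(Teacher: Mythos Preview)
Your proposal is correct and follows essentially the same approach as the paper's proof: both rely on the $\cL^1$-continuity of $\Evol$ on $PC([0,T],\cg)$ from Remark~\ref{basics-pw-evol}, the approximation results from Lemma~\ref{approx-stair} in the locally convex variant of Remark~\ref{appr-lcx}, and Lemma~\ref{Trotter-use} (already stated for $C^0$-regular groups) for the bang-bang step. The only cosmetic difference is that the paper proves (c)$\impl$(e) while you prove (d)$\impl$(e) directly, which is equivalent since (d) is a special case of~(c).
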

\begin{proof}
The implications ``(d)$\impl$(c)'', ``(e)$\impl$(c)'', and
``(f)$\impl$(d)'' are trivial.\\[2.3mm]
(c) implies (d):
Let $\gamma\in PC([0,T],\cg)$
such that $x_0.\evol(\gamma)\in U$.
Since $\evol\colon PC([0,T],\cg)\to G$
is continuous with respect to the $\cL^1$-topology
(cf.\ Remark~\ref{basics-pw-evol})
and the action $M\times G\to M$ is continuous,
there exists $\ve>0$ and a continuous seminorm~$P$ on
$\cg$ such that
\[
x_0.\evol(\eta)\in U \mbox{ for all $\eta\in PC([0,T],\cg)$ such that
$\|\gamma-\eta\|_{\cL^1,P}<\ve$.}
\]
By the conclusion of Lemma~\ref{approx-stair}(a), such an $\eta$ can be chosen as a staircase
function with values in $\gamma([0,T])\sub S$ (see Remark~\ref{appr-lcx}).\\[2.3mm]
(c) implies (e): In view of Remark~\ref{appr-lcx},
starting with a piecewise continuous function
$\gamma$, this can be shown like  ``(a)$\impl$(e)'' in the proof
of Theorem~\ref{thmC}.\\[2.3mm]
The implication ``(d)$\impl$(f)''
can be shown as in the proof of Theorem~\ref{thmC},
except that we consider
$\phi$ as a map from $\cg^\ell$ to $PC([0,T],\cg)$,
endowed with the $\cL^1$-topology.
\end{proof}
\section{Analogues beyond Fr\'{e}chet-manifolds}\label{sec-beyond}
In this section, we explain how our results can be extended
from the case of Fr\'{e}chet manifolds (and Fr\'{e}chet-Lie groups)
to the case of manifolds (and Lie groups) modelled on sequentially
complete locally convex spaces.
The main point is that an analogue of Lemma~\ref{locflow-uniq}
can be
established also in the current higher generality
(see Lemma~\ref{secompl-flo}),
as well as an analogue of Theorem~\ref{thmB}
(see Theorem~\ref{thmBgen}).
Once this foundation is established in the necessary detail,
it will be enough to revisit the other results,
and describe which minor modifications are necessary
in the statements and proofs.
\begin{numba}\label{def-lus}
We recall that a mapping $\gamma\colon I\to X$
from an interval $I\sub \R$ to a topological space~$X$
is called \emph{Lusin measurable}
if there exists a sequence $(K_j)_{j\in\N}$
of compact subsets $K_j\sub I$ such that
\begin{itemize}
\item[(i)]
The restriction $\gamma|_{K_j}\colon K_j\to X$ is
continuous for each $j\in \N$;
\item[(ii)]
$\lambda_1(I\setminus \bigcup_{j\in \N}K_j)=0$.
\end{itemize}
See \cite{FMP} and the references therein for
further information, also \cite{Nik}.
\end{numba}
\begin{rem}\label{close-to-borel}
(a)
Let $\tilde{\lambda}\colon \tilde{\cB}(\R)\to[0,\infty]$ be Lebesgue
measure. If $X$ is second countable,
then a map $\gamma\colon I\to X$ is Lusin measurable
if and only if $\gamma$ is measurable as a
function
from $(I,\tilde{\cB}(I))$ to $(X,\cB(X))$,
where $\tilde{\cB}(I):=\tilde{\cB}(\R)|_I$
(see \cite[Lemma~4.1.8]{Nik}).\medskip

\noindent(b)
If a Lusin measurable map $\gamma\colon I\to X$
and $(K_j)_{j\in\N}$
are as in \ref{def-lus},
after replacing $\gamma(x)$ with a constant
$c\in X$ for all $x\in I\setminus \bigcup_{j\in \N}K_j=:K_0$,
we can achieve that $\gamma$ is
measurable from $(I,\cB(I))$ to $(X,\cB(X))$.
If $X$ is Hausdorff, moreover $\gamma(K_j)$
is compact and metrizable and hence separable and metrizable
for each $j\in\N$ (and so is $\gamma(K_0)$, which is
$\{c\}$ or $\emptyset$).
Notably, $\gamma(I)=\bigcup_{j\in \N_0}\gamma(K_j)$ is separable.
\end{rem}
\begin{numba}
Henceforth,
if $E$ is a locally convex space,
$I\sub \R$ an interval
and $p\in [1,\infty]$,
we write $\cL^p(I,E)$
for the vector space
of all Lusin measurable mappings $\gamma\colon I\to E$
such that $\|\gamma\|_{\cL^p,q}:=\|q\circ\gamma\|_{\cL^p}<\infty$
for all continuous seminorms $q$ on~$E$.
We give $\cL^p(I,E)$ the locally convex vector topology
defined by the seminorms $\|\cdot\|_{\cL^p,q}$.
In the usual way, one now obtains Hausdorff
locally convex spaces $L^p(I,E)$
of equivalence classes $[\gamma]$
modulo Lusin  measurable functions vanishing
almost everywhere
(see \cite[p.\,43]{Nik}).
If $\gamma\colon I\to E$ is \emph{locally integrable}
in the sense that $\gamma|_{[a,b]}\in\cL^1([a,b],E)$
for all $a<b$ with $[a,b]\sub I$,
again we write $[\gamma]$ for the equivalence
class modulo Lusin measurable functions
which vanish almost everywhere.
\end{numba}
\begin{numba}
If $E$ is an sclc-space,
given real numbers $a<b$, following \cite{Nik}
we call a function $\eta\colon [a,b]\to E$
\emph{absolutely continuous} if
it is a primitive of some $\gamma\in\cL^1([a,b],E)$,
as in~(\ref{absviaint}).
Then $\eta':=[\gamma]$ is uniquely determined (see \cite[Lemma~4.2.6]{Nik}).
Let $p\in [1,\infty]$.
If $\eta'=[\gamma]$ with $\gamma\in \cL^p([a,b],E)$,
we call~$\eta$ an \emph{$AC_{L^p}$-map}.
If $I\sub\R$ is an interval,
we say that a function $\eta\colon I\to E$
is absolutely continuous (resp., $AC_{L^p}$)
if $\eta|_{[a,b]}$ is so for all real numbers
$a<b$ such that $[a,b]\sub I$.
Again, there is a locally
integrable function $\gamma\colon I\to E$
with primitive~$\eta$ and $\eta':=[\gamma]$ is uniquely determined.
\end{numba}
\begin{numba}\label{charu}(Chain Rule).
Let $E$ and $F$ be sclc-spaces,
$U\sub E$ be an open subset, $f\colon  U\to F$ a $C^1$-map
and $\eta\colon I\to E$ be an absolutely continuous function
on a non-degenerate interval $I\sub \R$ such that $\eta(I)\sub U$.
Let $\gamma\colon I\to E$ be a locally integrable function with $\eta'=[\gamma]$.
Then $f\circ\eta\colon I\to F$ is absolutely
continuous and
\[
(f\circ\eta)'=[t\mto df(\eta(t),\gamma(t))],
\]
by \cite[Lemma~4.2.16]{Nik} and its proof.
\end{numba}
\begin{numba}\label{numba-ode}
If $E$ is an sclc-space, $W\sub\R\times E$ a subset,
$f\colon W\to E$ a function
and $(t_0,y_0)\in W$,
we call a function $\gamma\colon I\to E$ on a non-degenerate interval $I\sub \R$
a \emph{Carath\'{e}odory solution} to the initial value problem (\ref{the-ivp})
if $\gamma$ is absolutely continuous, $t_0\in I$ holds, $
(t,\gamma(t))\in W$ for all $t\in I$, and the integral
equation~(\ref{cara2})
is satisfied, or equivalently
\begin{equation}\label{cara3}
\gamma'=[t\mto f(t,\gamma(t))]\quad\mbox{and}\quad
\gamma(t_0)=y_0.
\end{equation}
\end{numba}
\begin{numba}\label{substitute}
If $q\colon E\to F$ is a continuous linear map to a Fr\'{e}chet
space in the situation of~\ref{numba-ode},
then $q\circ \gamma$ is absolutely continuous
and
\[
(q\circ \gamma)'=[t\mto q(f(t,\gamma(t)))],
\]
by \ref{charu}. As $q\circ \gamma$ is an absolutely
continuous map to a Fr\'{e}chet space,
we deduce that there is a Borel set $I_0\sub I$
with $\lambda_1(I\setminus I_0)=0$
such that $q\circ \gamma$ is differentiable at each $t\in I_0$ and
\[
(q\circ\gamma)'(t)=q(f(t,\gamma(t)))\;\; \mbox{for all $t\in I_0$.}
\]
\end{numba}
\begin{numba}
Let $M$ be a $C^1$-manifold modelled on a locally
convex space and $TM$ be its tangent bundle, with the bundle
projection $\pi_{TM}\colon TM\to M$.
If $\gamma\colon I\to TM$ is a Lusin measurable
function on an interval $I\sub\R$, we write $[\gamma]$
for the set of all Lusin measurable functions
$\eta\colon I\to TM$
such that $\pi_{TM}\circ\gamma=\pi_{TM}\circ\eta$
and $\gamma(t)=\eta(t)$ for almost all $t\in I$.
\end{numba}
\begin{numba}
Let $M$ be a $C^1$-manifold modelled on an sclc-space~$E$.
For real numbers $a<b$, consider a continuous function
$\eta\colon [a,b]\to M$.
If $\eta([a,b])\sub U_\phi$ for some chart $\phi\colon U_\phi\to V_\phi\sub E$,
we say that $\eta$ is \emph{absolutely continuous}
if $\phi\circ \eta\colon I\to E$ is so,
and let
\[
\dot{\eta}:=[t\mto T\phi^{-1}((\phi\circ\eta)(t),\gamma(t))]
\]
with $\gamma\in\cL^1([a,b],E)$ such that $(\phi\circ\eta)'=[\gamma]$.
By \ref{charu}, absolute continuity of~$\eta$
is independent of the choice of~$\phi$,
and so is~$\dot{\eta}$.
In the general case,
we call $\eta$ absolutely continuous
if $[a,b]$ can be subdivided
into subintervals $[t_{j-1},t_j]$
such that $\eta([t_{j-1},t_j])$
is contained in a chart domain
and $\eta|_{[t_{j-1},t_j]}$ is absolutely
continuous. If $(\eta|_{[t_{j-1},t_j]})^{\!\textstyle\cdot}=[\gamma_j]$,
we let $\dot{\eta}:=[\gamma]$ with $\gamma(t):=\gamma_j(t)$
if $t\in [t_{j-1},t_j[$ or $j$ is maximal and $t\in [t_{j-1}, t_j]$.
If $I\sub\R$ is an interval, we call a function $\eta\colon I\to M$
absolutely continuous if $\eta|_{[a,b]}$ is so for all $a<b$ such that
$[a,b]\sub I$. We define $\dot{\eta}=[\gamma]$ where
$\gamma$ is defined piecewise using representatives
of $(\eta|_{[a,b]})^{\!\textstyle\cdot}$
for $[a,b]$ in a countable cover of~$I$.
\end{numba}
\begin{numba}\label{chainRNEW}
Let $f\colon M\to N$ be a $C^1$-map between
$C^1$-manifolds modelled on sclc-spaces.
Let $I\sub \R$ be a non-degenerate interval and $\eta\colon I\to M$
be absolutely continuous. Let $\gamma\colon I\to TM$
be a Lusin measurable function such that $\pi_{TM}\circ\gamma=\eta$
and $\dot{\eta}=[\gamma]$.
Then $f\circ\eta\colon I\to N$ is absolutely
continuous and
\[
(f\circ \eta)^{\!\textstyle\cdot}=[t\mto Tf(\gamma(t))],
\]
as a consequence of~\ref{charu}.
\end{numba}
\begin{numba}\label{def-ode-mfd-2}
If $M$ is a $C^1$-manifold modelled on an sclc-space, $W\sub\R\times M$ a subset,
$f\colon W\to TM$ a function such that $f(t,y)\in T_yM$
for all $(t,y)\in W$ and $(t_0,y_0)\in W$,
we call a function $\gamma\colon I\to M$ on a non-degenerate interval $I\sub \R$
a \emph{Carath\'{e}odory solution} to the initial value problem
\begin{equation}\label{ivp-xyz}
\dot{y}(t)=f(t,y(t)),\quad y(t_0)=y_0
\end{equation}
if $\gamma$ is absolutely continuous, $t_0\in I$ holds,
$(t,\gamma(t))\in W$ for all $t\in I$,
\begin{equation}\label{cara13}
\dot{\gamma}\, =\, [t\mto f(t,\gamma(t))],\quad\mbox{and}\quad
\gamma(t_0)=y_0.
\end{equation}
Solutions to the differential equation $\dot{y}(t)=f(t,y(t))$
are defined analogously.
\end{numba}
\begin{numba}
Using the concepts of absolutely continuous functions, $AC_{L^p}$-functions,
and Carath\'{e}odory solutions just described,
one can define $L^p$-regular Lie groups
modelled on sclc-spaces
for $p\in [1,\infty]$ (see \cite[Definition~4.3.7]{Nik}),
as in the case of Fr\'{e}chet-Lie groups
already discussed in this article.
\end{numba}
\begin{rem}
Although $\eta'=[\gamma]$
is still uniquely determined
for an absolutely continuous function
$\eta\colon [a,b]\to E$ to an sclc-space,
in contrast to the Fr\'{e}chet case
$\eta'(t)$ may not exist for almost
all $t$, so that we cannot work
with genuine \emph{derivatives at a point}
anymore.
Generalizing the results and proofs
obtained so far in the Fr\'{e}chet case,
this problem can be frequently be avoided
by replacing statements like
\[
\eta'(t)=\gamma(t)\;\,\mbox{for almost all $t$}
\]
with $\eta'=[t\mto\gamma(t)]$.
Notably, Carath\'{e}odory solutions
to initial value problems
are now defined via (\ref{cara3})~and~(\ref{cara13}).\\[2.3mm]
For example, using this interpretation,
Definitions~\ref{defn-loc-uni} and \ref{defn-loc-ex}
can be extended
to the case that $E$ is an sclc-space,
and Lemma~\ref{la-loc-glob}
and its proof remain valid in this generality.\\[2.3mm]
There is one exception, however:
Derivatives at a point played an
essential role in the proof
of Lemma~\ref{locflow-uniq}.
We therefore discuss the necessary adaptations
in detail now, starting with a suitable modification
of Definition~\ref{defn-loc-flo}.
As we shall see, the property~\ref{substitute}
will be good enough to achieve our goals.
\end{rem}
\begin{defn}\label{defn-loc-flo-2}
Let $J\sub \R$ be a non-degenerate interval, $E$ be an
sclc-space,
$U\sub E$ be a subset
and $f\colon W\to E$ be a function on an open subset $W\sub J\times U$.
Let $k\in\N\cup\{\infty\}$.
We say that the differential equation $y'(t)=f(t,y(t))$
\emph{admits local flows which are pullbacks of $C^k$-maps}
if,
for all $(\bar{t},\bar{y}) \in W$,
there exist a compact interval $I\sub J$ which is a neighbourhood
of~$\bar{t}$ in~$J$,
an open neighborhood~$V$ of $\bar{y}$ in~$U$
with $I\times V\sub W$
and function
\[
\Phi\colon I\times I\times V\to E
\]
such that conditions {\rm(a)} and {\rm(b)}
from Definition~{\rm\ref{defn-loc-flo}}
are satisfied, and~{\rm(c)$'$}:
\begin{itemize}
\item[{\rm(c)}$'$]
There exist sclc-spaces~$E_1$ and $E_2$,
open subsets $V_1\sub E_1$ and $V_2\sub E_2$,
absolutely continuous functions
$\alpha\colon I\to V_1\sub E_1$ and $\beta\colon I\to V_2\sub E_2$,
and a $C^k$-map $\Psi\colon V_1\times V_2 \times V\to E$
such that
\[
\Phi(t,t_0,y_0)=\Psi(\alpha(t),\beta(t_0),y_0)\quad\mbox{for all $(t,t_0,y_0)
\in I\times I\times V$.}
\]
Moreover, for each continuous linear map $q\colon E\to F$
to a Banach space~$F$ and each compact subset $C\sub V$,
we require the existence of a Borel set $I_q\sub I$
with $\lambda_1(I\setminus I_q)=0$
and an open subset $\Omega_q\sub V$ with $C\sub \Omega_q$
such that
$\frac{d}{dt}q(\Phi_{t,t_0}(y_0))$
exists
and
\[
\frac{d}{dt}q(\Phi_{t,t_0}(y_0))=q(f(t,\Phi_{t,t_0}(y_0)))
\]
for all $y_0\in \Omega_q$, $t_0\in I$, and $t \in I_q$.
\end{itemize}
\end{defn}
Note that Remark~\ref{flo-then-ex}
remains valid in sclc-spaces if we replace Definition~\ref{defn-loc-flo}
with Definition~\ref{defn-loc-flo-2}.
\begin{la}\label{secompl-flo}
Let $J\sub \R$ be a non-degenerate interval, $E$ be an
sclc-space,
$U\sub E$ be a subset
and $f\colon W\to E$ be a function on an open subset $W\sub J\times U$.
If the differential equation $y'(t)=f(t,y(t))$
admits local flows which are pullbacks of $C^2$-maps,
then it satisfies local uniqueness
of Carath\'{e}odory solutions.
\end{la}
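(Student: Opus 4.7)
The plan is to rerun the proof of Lemma~\ref{locflow-uniq} with the understanding that, in an sclc-space~$E$, pointwise derivatives of an absolutely continuous $E$-valued function need not exist; by~\ref{substitute}, however, they do exist almost everywhere once we compose with any continuous linear map $q\colon E\to F$ into a Banach space~$F$. Accordingly, given Carath\'{e}odory solutions $\gamma_j\colon I_j\to E$ of $y'(t)=f(t,y(t))$ with $\wb y:=\gamma_1(\wb t)=\gamma_2(\wb t)$, I would first fix data $I$, $V$, $\Phi$, $Y$, $E_1$, $E_2$, $V_1$, $V_2$, $\Psi$, $\alpha$, $\beta$ as in Definition~\ref{defn-loc-flo-2}, then choose a relatively open interval $K\sub I_1\cap I_2\cap I$ containing~$\wb t$ on which $\gamma_j(K)\sub Y$, $\Phi_{t,\wb t}(\wb y)\in Y$, and $\theta_j(t):=\Phi_{\wb t,t}(\gamma_j(t))=\Psi(\alpha(\wb t),\beta(t),\gamma_j(t))\in Y$. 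Injectivity of $\Phi_{\wb t,t}|_Y$ (from (a), (b)) reduces $\gamma_1|_K=\gamma_2|_K$ to $\theta_1=\theta_2$, which, since $\theta_j(\wb t)=\wb y$, reduces further to showing that each $\theta_j$ is constant on~$K$.

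Since $E$ is Hausdorff locally convex, continuous linear maps to Banach spaces (indeed, continuous linear functionals) separate its points, so it suffices to show that $q\circ\theta_j$ is constant on~$K$ for every continuous linear $q\colon E\to F$ with $F$ Banach. Fix such a~$q$. The map $q\circ\Psi\colon V_1\times V_2\times V\to F$ is $C^2$, hence by the result of Appendix~\ref{appA} it factors, on an open neighbourhood of $(\alpha(\wb t),\beta(\wb t),\wb y)$, through a $C^1$-map on an open subset of a Banach space. The Banach-target chain rule~\ref{charu} therefore applies: after shrinking~$K$ so that $(\alpha(\wb t),\beta(K),\gamma_j(K))$ lies in that neighbourhood, $q\circ\theta_j$ is absolutely continuous and, with representatives $\beta_0$, $\gamma_{j,0}$ of $\beta'$, $\gamma_j'$,
$$
(q\circ\theta_j)'=\bigl[\,t\mto d_2(q\circ\Psi)(\alpha(\wb t),\beta(t),\gamma_j(t);\beta_0(t))+d_3(q\circ\Psi)(\alpha(\wb t),\beta(t),\gamma_j(t);\gamma_{j,0}(t))\,\bigr].
$$

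To show this equivalence class vanishes, I would derive the analogue of~(\ref{tool-flw}) with~$q$ inserted: for $z\in Y$ the identity $z=\Psi(\alpha(\wb t),\beta(t),\Phi_{t,\wb t}(z))$ holds on~$I$, and its image under~$q$ is amenable to~\ref{charu}. Property~(c)$'$ supplies a Borel set $I_q\sub I$ of full measure and an open set $\Omega_q\sub V$ on which $\frac{d}{dt}q(\Phi_{t,\wb t}(z))=q(f(t,\Phi_{t,\wb t}(z)))$ for $z\in \Omega_q$ and $t\in I_q$. Shrink~$K$ once more so that $\theta_j(K)\sub \Omega_q$ (possible since $\theta_j$ is continuous and $\theta_j(\wb t)=\wb y$); differentiating then gives, for almost every $t\in K$,
$$
0=d_2(q\circ\Psi)(\alpha(\wb t),\beta(t),\Phi_{t,\wb t}(z);\beta_0(t))+d_3(q\circ\Psi)(\alpha(\wb t),\beta(t),\Phi_{t,\wb t}(z);f(t,\Phi_{t,\wb t}(z))).
$$
Setting $z:=\theta_j(t)$, so $\Phi_{t,\wb t}(z)=\gamma_j(t)$, and using $\gamma_j'=[t\mto f(t,\gamma_j(t))]$, this matches the displayed expression for $(q\circ\theta_j)'$, whence $(q\circ\theta_j)'=0$ and $q\circ\theta_j\equiv q(\wb y)$ on~$K$. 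As $q$ was arbitrary, $\theta_j\equiv \wb y$ on~$K$, and local uniqueness follows.

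I expect the main obstacle to be bookkeeping rather than conceptual: the exceptional Borel set $I_q$ and the neighbourhood $\Omega_q$ both depend on~$q$, and the two chain-rule computations (one for $q\circ\theta_j$, one for the composite identity differentiated in~$t$) must be matched on a common full-measure subset of~$K$. This forces a $q$-by-$q$ shrinking of~$K$, but that is harmless, since constancy is a pointwise property and each $q$ is fixed while the argument is carried out. The $C^2$ hypothesis on~$\Psi$ (instead of the $C^1$ hypothesis in the Fr\'{e}chet case) is exactly what is required for the Appendix~\ref{appA} factorisation of $q\circ\Psi$ through a Banach-space $C^1$-map, and is the only substantive way in which the sclc argument departs from the Fr\'{e}chet one.
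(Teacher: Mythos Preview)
Your overall architecture is the right one and matches the paper's, but there is a genuine gap in the bookkeeping you flag as ``harmless.'' It is not: if you shrink $K$ \emph{after} fixing $q$, you only obtain $q\circ\theta_j\equiv q(\wb y)$ on a $q$-dependent interval $K_q$, and since uncountably many $q$ are in play, $\bigcap_q K_q$ may reduce to $\{\wb t\}$. ``Constancy is pointwise'' does not help---to conclude $\theta_j(t_0)=\wb y$ at a given $t_0$ you would need $t_0\in K_q$ for \emph{every} $q$. The paper avoids this by fixing a \emph{compact} interval $K$ once and for all and then, for each $q$, invoking Lemma~\ref{factorCk} on the compact set $\alpha(K)\times\beta(K)\times(\gamma_1(K)\cup\gamma_2(K))$ rather than at the single point $(\alpha(\wb t),\beta(\wb t),\wb y)$; the factorisation then holds on a neighbourhood of the whole trajectory, and no shrinking is needed.

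A second, related issue: you apply condition~(c)$'$ to the map $q$ itself, obtaining $\frac{d}{dt}q(\Phi_{t,\wb t}(z))=q(f(t,\Phi_{t,\wb t}(z)))$. But this is not what the differentiated identity requires. To differentiate $q(z)=(q\circ\Psi)(\alpha(\wb t),\beta(t),\Phi_{t,\wb t}(z))$ \emph{pointwise} and obtain your displayed formula, you must pass through the Banach factorisation $q\circ\Psi=\Psi_q\circ(p_1\times p_2\times p)$; the third argument becomes $p(\Phi_{t,\wb t}(z))$, and it is the pointwise derivative of \emph{this} that is needed---uniformly in $z$, so that the substitution $z=\theta_j(t)$ is legitimate. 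That is precisely what (c)$'$ supplies when applied with $p$ (not $q$) and the compact set $C=\theta_1(K)\cup\theta_2(K)$, which also guarantees $\theta_j(K)\sub\Omega_p$ without shrinking. With these two corrections your argument goes through and coincides with the paper's proof.
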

\begin{proof}
Let $\gamma_j\colon I_j\to E$
be solutions to $y'(t)=f(t,y(t))$
for $j\in \{1,2\}$ and $\bar{t}\in I_1\cap I_2$ such that
$\bar{y}:=\gamma_1(\bar{t})=\gamma_2(\bar{t})$.
To see that $\gamma_1$ and $\gamma_2$ coincide on a neighbourhood of~$\bar{t}$
in $I_1\cap I_2$, we may assume
that $I_1\cap I_2$ is a non-degenerate interval.
Let $I$, $V$, $\Phi$, $Y$, $E_1$, $E_2$, $V_1$, $V_2$,
$\Psi$, $\alpha$, and $\beta$
be as in Definition~\ref{defn-loc-flo-2}.\\[2.3mm]
There exists a compact interval $K\sub I_1\cap I_2\cap I$
which is a neighbourhood of $\bar{t}$ in $I_1\cap I_2\cap I$
such that $\gamma_1(K)\sub Y$,
$\gamma_2(K)\sub Y$ and $\Phi_{t,\bar{t}}(\bar{y})\in Y$ for all $t\in K$.
After shrinking~$K$ if necessary, we can also assume that
\[
\theta_j(t):=\Phi_{\bar{t},t}(\gamma_j(t))\in Y \;\;\mbox{for all
$t\in K$ and $j\in\{1,2\}$.}
\]
Note that $\theta_j$ is absolutely continuous
by \ref{charu} as
$\theta_j(t)=\Psi(\alpha(\bar{t}),\beta(t),\gamma_j(t))$.
It suffices to show that
\[
\gamma_j(t)=\Phi_{t,\bar{t}}(\bar{y})\;\;\mbox{for
all $t\in K$ and $j\in\{1,2\}$.}
\] 
Since $\Phi_{t,\bar{t}}\circ \Phi_{\bar{t},t}|_Y=\id_Y$ for all
$t\in I$ (by (a) and (b) in Definition~\ref{defn-loc-flo-2}), the map
$\Phi_{\bar{t},t}|_Y$ is injective.
Hence $\gamma_1|_K=\gamma_2|_K$
will hold if we can show that both $\theta_1$ and $\theta_2$ coincide
with
\[
\theta\colon K\to E,\quad t\mto \Phi_{\bar{t},t}(\Phi_{t,\bar{t}}(\bar{y}))=\bar{y}.
\]
It suffices to show that
\[
q\circ\theta=q\circ \theta_j
\]
for $j\in\{1,2\}$, for each continuous
linear map $q\colon E\to \R$.
Since $\theta_j(\bar{t})=\bar{y}=\theta(\bar{t})$ for $j\in\{1,2\}$,
the latter will hold if we can show that
\[
(q\circ \theta_j)'(t)=(q\circ \theta)'(t)=0
\]
for almost all $t\in K$.
By Lemma~\ref{factorCk},
there exist Banach spaces $F_j$ for $j\in\{1,2\}$ and $F$,
open subsets $Q_j\sub F_j$ and $Q\sub F$,
continuous linear maps $p_j\colon E_j\to F_j$
and $p\colon E\to F$,
and open subsets $V_{j,0}\sub V_j$ and $V_0\sub V$,
such that
\[
\alpha(K)\sub V_{1,0},\quad \beta(K)\sub V_{2,0},\quad
\gamma_1(K)\cup\gamma_2(K)\sub V_0,
\]
\[
p_1(V_{1,0})\sub Q_1,\quad p_2(V_{2,0})\sub Q_2,\quad
p(V_0)\sub Q
\]
and a $C^1$-map $\Psi_q\colon Q_1\times Q_2\times Q\to \R$
such that
\[
q(\Psi(x,y,z))=\Psi_q(p_1(x),p_2(y),p(z))\quad
\mbox{for all $\,(x,y,z)\in V_{1,0}\times V_{2,0}\times V_0$.}
\]
For $j\in \{1,2\}$, there exists a Borel set $I_{j,0}\sub I_j$
with $\lambda_1(I_j\setminus I_{j,0})=0$
such that $p\circ \gamma_j$ is differentiable at each $t\in I_{j,0}$
and
\[
(p\circ \gamma_j)'(t)\, =\, p(f(t,\gamma_j(t)))
\mbox{ \,for all $\, t\in I_{j,0}$,}
\]
see~\ref{substitute}.
Let~$I_p\sub I$ and $\Omega_p$ be analogous
to~$I_q$ and $\Omega_q$ in Definition~\ref{defn-loc-flo-2}\,(c)$'$,
applied with~$p$ in place of~$q$
and $C:=\theta_1(K)\cup\theta_2(K)$.
After shrinking~$I_p$,
we may assume that, moreover, $(p_2\circ \beta)'(t)$
exists at each $t\in I_p$.
For each $t\in K$, we have
\begin{eqnarray*}
q(\theta_j(t))&=&q(\Phi(\bar{t},t,\gamma_j(t)))
=q(\Psi(\alpha(\bar{t}),\beta(t),\gamma_j(t)))\\
&=&\Psi_q((p_1\circ\alpha)(\bar{t}),
(p_2\circ\beta)(t),(p\circ\gamma_j)(t)).
\end{eqnarray*}
Differentiating at $t\in K_0:= K \cap I_{1,0}\cap I_{2,0}\cap I_p$,
we obtain
\begin{eqnarray*}
\frac{d}{dt}\,
q(\theta_j(t)) &=&
\frac{d}{dt}\,
\Psi_q((p_1\circ\alpha)(\bar{t}),(p_2\circ\beta)(t),(p\circ\gamma_j)(t)))\\
&=&
d_2\Psi_q((p_1\circ\alpha)(\bar{t}),(p_2\circ\beta)(t),(p\circ\gamma_j)(t);
(p_2\circ\beta)'(t))\\
& & \;+
d_3\Psi_q((p_1\circ\alpha)(\bar{t}),(p_2\circ\beta)(t),
(p\circ\gamma_j)(t);\underbrace{(p\circ\gamma_j)'(t)}_{=p(f(t,\gamma_j(t)))}).
\end{eqnarray*}
Using the Chain Rule, the second summand
can be rewritten as
\[
d_3(q\circ\Psi)(\alpha(\bar{t}),\beta(t),
\gamma_j(t);f(t,\gamma_j(t)))
\,=\,
q(d\Phi_{\bar{t},t}(\gamma_j(t),f(t,\gamma_j(t)))).
\]
Assume that $z\in Y\cap\Omega_p$ and $t\in K_0$
are given such that $\Phi(t,\bar{t},z)\in V_0$.
Then $\Phi(\tau,\bar{t},z)\in V_0$
for $\tau\in K$ close to~$t$, and
\begin{equation}\label{indep}
q(z)=q(\Phi(\bar{t},\tau,\Phi_{\tau,\bar{t}}(z)))
=\Psi_q((p_1\circ\alpha)(\bar{t}),(p_2\circ\beta)(\tau),p(\Phi_{\tau,\bar{t}}(z)))
\end{equation}
is independent of~$\tau$. Differentiating (\ref{indep})
with respect to~$\tau$
at $\tau=t$, we obtain
\begin{eqnarray}
0 &=&
d_2\Psi_q((p_1\circ\alpha)(\bar{t}),(p_2\circ\beta)(t),p(\Phi_{t,\bar{t}}(z));
(p_2\circ\beta)'(t)) \notag\\
& & \;
+\,q(d\Phi_{\bar{t},t}(\Phi_{t,\bar{t}}(z),f(t,\Phi_{t,\bar{t}}(z)))),\label{Git}
\end{eqnarray}
repeating the arguments used to calculate
$\frac{d}{dt}\,q(\theta_j(t))$; note that
\[
\frac{d}{dt}p(\Phi_{t,\bar{t}}(z))=p(f(t,\Phi_{t,\bar{t}}(z)))
\]
since $t\in I_p$ and $z\in\Omega_p$.
For $j\in \{1,2\}$ and any $t\in K_0$,
we may choose $z:=\theta_j(t)$
here. In fact, then $z\in\Omega_p$ by choice of~$C$;
also, $z=\theta_j(t)\in Y$
and
\[
\Phi_{t,\bar{t}}(z)=\Phi_{t,\bar{t}}(\Phi_{\bar{t},t}(\gamma_j(t)))
=\gamma_j(t)\in V_0.
\]
Substituting $\Phi_{t,\bar{t}}(z)=\gamma_j(t)$ into (\ref{Git}),
we obtain
\begin{eqnarray*}
0 &=&
d_2\Psi_q((p_1\circ\alpha)(\bar{t}),(p_2\circ\beta)(t),(p\circ \gamma_j)(t);
(p_2\circ\beta)'(t))\\
& & \;\,
+ \, q(d\Phi_{\bar{t},t}(\gamma_j(t),f(t,\gamma_j(t))))
\; =\;  \frac{d}{dt}\,q(\theta_j(t)),
\end{eqnarray*}
by the above calculation.
\end{proof}
\begin{rem}\label{all-holds}
Remark~\ref{mfdlocalize}
concerning associated differential equations in local charts,
Definition~\ref{def-locuni-mfd} of local uniqueness,
and Lemma~\ref{mfd-loc-glob} (and its proof) concerning global uniqueness
remain meaningful for manifolds with
sequentially complete modelling spaces, if Definition~\ref{def-ode-mfd}
is replaced with \ref{def-ode-mfd-2}.
In Definition~\ref{def-locex-mfd}
of local existence, Lemma~\ref{max-sol}
concerning maximal solutions, and Definition~\ref{max-flow}
of maximal flows, we can replace Fr\'{e}chet spaces with
sclc-spaces,
without further changes.
\end{rem}
\begin{defn}\label{flopbnew}
Let $M$ be a $C^k$-manifold
modelled on an sclc-space, with $k\in\N\cup\{\infty\}$.
Let $J\sub\R$ be a non-degenerate interval,
$W\sub J\times M$ be an open subset,
and $f\colon W\to TM$ be a function such that
$f(t,y)\in T_yM$ for all $(t,y)\in W$.
We say that the differential equation
$\dot{y}(t)=f(t,y(t))$
\emph{admits local flows which are pullbacks of $C^k$-maps}
if $y'(t)=f_\phi(t,y(t))$ does
so, for each chart $\phi\colon U_\phi\to V_\phi$.
\end{defn}
\begin{rem}\label{thmAgen}
(a) Using Definition~\ref{flopbnew},
we immediately deduce from Lemma~\ref{secompl-flo}
that Proposition~\ref{thmA}
remains valid for differential
equations admitting local flows
which are pullbacks of $C^2$-maps,
on a $C^2$-manifold $M$ modelled
on an sclc-space (instead of
a Fr\'{e}chet space).\medskip

\noindent
(b) In the case of a Fr\'{e}chet manifold,
the condition in Definition~\ref{def-loc-flo-mfd}
implies the one in Definition~\ref{flopbnew}
(see proof of Proposition~\ref{thmA}).
We avoided a direct analogue
of Definition~\ref{def-loc-flo-mfd} 
in the case of sequentially complete
modelling spaces.
The proof of Theorem~\ref{thmB}
now becomes more technical.
\end{rem}
\begin{thm}\label{thmBgen}
Theorem~{\rm\ref{thmB}}
remains valid if $G$ and $M$ are modelled
on sequentially complete locally convex spaces
$($which need not be Fr\'{e}chet spaces$)$.
\end{thm}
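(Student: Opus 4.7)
My plan is to emulate the proof of Theorem~\ref{thmB}: define the candidate flow
\[
\Phi\colon [a,b]\times[a,b]\times M\to M,\quad
(t,t_0,y_0)\mto\sigma(y_0,\eta(t_0)^{-1}\eta(t)),
\]
and exhibit it as the pullback $\Phi(t,t_0,y_0)=\Psi(\eta(t),\eta(t_0),y_0)$ of the smooth map $\Psi\colon G\times G\times M\to M$, $(g,h,y)\mto\sigma(y,h^{-1}g)$, along $\zeta_1:=\zeta_2:=\eta$. The cocycle identity $\Phi_{t_2,t_1}\circ\Phi_{t_1,t_0}=\Phi_{t_2,t_0}$ is a purely algebraic consequence of $\sigma$ being a group action, so the analogue of condition (b) of Definition~\ref{def-loc-flo-mfd} comes for free. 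To see that each $t\mto\Phi_{t,t_0}(y_0)$ is a Carath\'{e}odory solution of~(\ref{ivp-xyz}) in the sense of~\ref{def-ode-mfd-2}, I would first check absolute continuity of $\theta\colon t\mto\eta(t_0)^{-1}\eta(t)$ by applying the chain rule~\ref{chainRNEW} to left translation by $\eta(t_0)^{-1}$ (so that $\theta(t)^{-1}.\dot\theta=[t\mto\gamma(t)]$), and then apply~\ref{chainRNEW} to $\sigma_{y_0}$ to obtain
\[
\dot\phi=[t\mto T\sigma_{y_0.\theta(t)}(\gamma(t))]=[t\mto\gamma(t)_\sharp(\phi(t))],
\]
where $\phi(t):=\Phi_{t,t_0}(y_0)$. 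This yields both existence and the explicit formula for the maximal flow, \emph{provided uniqueness is known}.

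The substantive work is local uniqueness of Carath\'{e}odory solutions. Following Definition~\ref{flopbnew} and Remark~\ref{thmAgen}, I would show that, for every chart $\phi\colon U_\phi\to V_\phi\sub E$ of~$M$, the associated ODE $y'(t)=f_\phi(t,y(t))$ from Remark~\ref{mfdlocalize} admits local flows which are pullbacks of $C^\infty$-maps in the sense of Definition~\ref{defn-loc-flo-2}; local uniqueness on~$M$ then follows from Lemma~\ref{secompl-flo} and the localization principle of Remark~\ref{mfdlocalize}. Given $(\bar t,\bar z)\in W_\phi$ with $\bar y:=\phi^{-1}(\bar z)$, I would pick a chart $\psi\colon U_\psi\to V_\psi\sub\cg$ of~$G$ with $\eta(\bar t)\in U_\psi$, a compact subinterval $I\sub[a,b]$ which is a neighbourhood of~$\bar t$ with $\eta(I)\sub U_\psi$, and open neighbourhoods $V_1,V_2\sub V_\psi$ of $\psi(\eta(\bar t))$ and $P\sub V_\phi$ of~$\bar z$ small enough that
\[
\Psi_\phi\colon V_1\times V_2\times P\to E,\quad (u,v,z)\mto\phi\bigl(\sigma(\phi^{-1}(z),\psi^{-1}(v)^{-1}\psi^{-1}(u))\bigr),
\]
is well defined and smooth. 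With $\alpha:=\beta:=\psi\circ\eta|_I$, which is absolutely continuous by~\ref{chainRNEW}, the local flow $\Phi_\phi(t,t_0,z_0):=\Psi_\phi(\alpha(t),\beta(t_0),z_0)$ satisfies (a) and (b) of Definition~\ref{defn-loc-flo-2} by transporting the calculation of the previous paragraph through~$\phi$.

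The hard part will be condition~(c)$'$: given a continuous linear map $q\colon E\to F$ to a Banach space and a compact $C\sub P$, one must produce a full-measure Borel set $I_q\sub I$ and an open $\Omega_q\supseteq C$ in~$P$ such that $\frac{d}{dt}q(\Phi_{\phi,t,t_0}(z_0))=q(f_\phi(t,\Phi_{\phi,t,t_0}(z_0)))$ holds pointwise for all $t\in I_q$, $t_0\in I$, $z_0\in\Omega_q$, with $I_q$ independent of the choice of $t_0$ and $z_0$. Pointwise differentiation of absolutely continuous maps to~$E$ may fail in the sclc setting, so here I would invoke the Appendix~A factorization: after shrinking $V_1,V_2,\Omega_q$, one obtains Banach spaces $F_1,F_2,F'$, continuous linear maps $p_1,p_2,p$ with codomains in them, and a $C^1$-map $\wt\Psi$ on the images such that $q\circ\Psi_\phi=\wt\Psi\circ(p_1\times p_2\times p)$. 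Since $p_1\circ\alpha$ and $p_2\circ\beta$ are Banach-valued absolutely continuous, they are differentiable outside a common null set $I_1\sub I$, and \ref{substitute} applied with the linear map~$p$ gives a null set $N(t_0,z_0)\sub I$ outside of which $(p\circ\Phi_{\phi,\cdot,t_0}(z_0))'(t)=p(f_\phi(t,\Phi_{\phi,t,t_0}(z_0)))$. Using the flow identity $\Phi_{\phi,t,t_0}(z_0)=\Phi_{\phi,t,\bar t}(\Phi_{\phi,\bar t,t_0}(z_0))$ one reduces the dependence on $t_0,z_0$ to a single parameter varying in~$\Omega_q$, and the classical Banach-space chain rule for $\wt\Psi$ then allows us to absorb the $z_0$-dependence into a single full-measure set $I_q\sub I_1$ by uniformity of the factorization. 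Once Definition~\ref{flopbnew} is thereby established, Lemma~\ref{secompl-flo} delivers local uniqueness, and the analogues of Lemma~\ref{max-sol} and Definition~\ref{max-flow} indicated in Remark~\ref{all-holds} identify $\Phi$ with the maximal flow~$\Fl$.
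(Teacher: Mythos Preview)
Your outline matches the paper's proof: construct the global candidate flow $\Phi$, verify conditions~(a) and~(b) of Definition~\ref{defn-loc-flo-2} in charts via~\ref{chainRNEW} and the group-action algebra, then establish~(c)$'$ using the Appendix~\ref{appA} factorization so that Lemma~\ref{secompl-flo} yields local uniqueness and the analogues in Remark~\ref{all-holds} identify $\Phi$ with the maximal flow.

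The one place where you overcomplicate is the uniformity argument for~$I_q$ in~(c)$'$: the parameter-dependent null set $N(t_0,z_0)$ and the flow identity are red herrings. Once Lemma~\ref{factorCk} gives $q\circ\Psi_\phi=\wt\Psi\circ(p_1\times p_2\times p)$, the formula
\[
q(\Phi_\phi(t,t_0,z_0))=\wt\Psi\bigl((p_1\circ\alpha)(t),\,(p_2\circ\beta)(t_0),\,p(z_0)\bigr)
\]
already shows that for \emph{fixed} $(t_0,z_0)$ the last two arguments are constants; the right-hand side is therefore a $C^1$ function, defined on an open subset of a Banach space, composed with the single Banach-valued absolutely continuous curve $p_1\circ\alpha$. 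By~\ref{C1actsAC} it is differentiable at every $t$ in the full-measure set $I_q:=\{t\in I:(p_1\circ\alpha)'(t)\ \text{exists}\}$, which is manifestly independent of $(t_0,z_0)$. The paper then checks $\frac{d}{dt}q(\Phi_\phi(t,t_0,z_0))=q(f_\phi(t,\Phi_\phi(t,t_0,z_0)))$ by a direct partial-derivative computation unwinding the definitions of $\Psi_\phi$ and $f_\phi$; no appeal to $(p\circ\Phi_{\phi,\cdot,t_0}(z_0))'$ is needed, and invoking~\ref{substitute} for that purpose would reintroduce exactly the $(t_0,z_0)$-dependence you are trying to avoid.
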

\begin{proof}
Given $g\in G$, write $\lambda_g\colon G\to G$, $h\mto gh$
for left translation by~$g$.
Write $\sigma_y\colon G\to M$, $g\mto\sigma(y,g)$ for $y\in M$;
thus
\begin{equation}\label{sta0}
T\sigma_y(v)=v_\sharp(y)\quad\mbox{for all $\,v\in\cg$.}
\end{equation}
Since $\sigma(y,gh)=\sigma(\sigma(y,g),h)$ for all $y\in M$ and $g,h\in G$, we have
$\sigma_y\circ\lambda_g=\sigma_{\sigma(y,g)}$ and thus
\begin{equation}\label{sta1}
T\sigma_y\circ T\lambda_g=T\sigma_{\sigma(y,g)}\quad \mbox{for all $\,y\in M$
and $g\in G$.}
\end{equation}
If we define $\Fl$ as in~(\ref{flo-form})
and abbreviate $\Fl_{t,t_0}(y_0):=\Fl(t,t_0,y_0)$
for $t,t_0\in [a,b]$ and $y_0\in M$,
then
\begin{equation}\label{propaga}
\Fl_{t_2,t_1}(\Fl_{t_1,t_0}(y_0))=\Fl_{t_2,t_0}(y_0)\quad
\mbox{for all $t_0,t_1,t_2\in [a,b]$}
\end{equation}
holds as the left-hand side is
\begin{eqnarray*}
\sigma(\sigma(y_0,\eta(t_0)^{-1}\eta(t_1)),\eta(t_1)^{-1}\eta(t_2))
&=&\sigma(y_0,\eta(t_0)^{-1}\eta(t_1)\eta(t_1)^{-1}\eta(t_2))\\
&=&\sigma(y_0,\eta(t_0)^{-1}\eta(t_2))
\end{eqnarray*}
and thus equals the right-hand side.
Recall that
\begin{equation}\label{asmap}
\dot{\eta}=[t\mto \eta(t).\gamma(t)]=[t\mto T\lambda_{\eta(t)}\gamma(t)].
\end{equation}
Given $t_0\in[a,b]$ and $y_0\in M$,
let $\zeta(t):=\Fl_{t,t_0}(y_0)$ for $t\in [a,b]$.
Since $\zeta=\sigma_{y_0}\circ \lambda_{\eta(t_0)^{-1}}\circ\eta$,
the map $\zeta$ is absolutely continuous
and we have
\begin{eqnarray*}
\dot{\zeta}&=&[t\mto T\sigma_{y_0}T\lambda_{\eta(t_0)^{-1}}T\lambda_{\eta(t)}\gamma(t)]\\
&=&[t\mto T\sigma_{y_0}T\lambda_{\eta(t_0)^{-1}\eta(t)}\gamma(t)]\\
&=& [t\mto T\sigma_{\sigma(y_0,\eta(t_0)^{-1}\eta(t))}\gamma(t)]
=[t\mto T\sigma_{\zeta(t)}\gamma(t)]\\
&=& [t\mto\gamma(t)_\sharp(\zeta(t))],
\end{eqnarray*}
using \ref{chainRNEW} and (\ref{asmap}) for the first equality,
then (\ref{sta1}), and eventually~(\ref{sta0}).
Thus $\zeta$ is a Carath\'{e}odory solution to
the initial value problem
\begin{equation}\label{onceu}
\dot{y}(t)=\gamma(t)_\sharp(y(t)), \qquad y(t_0)=y_0.
\end{equation}
We define $f\colon [a,b]\times M\to TM$, $(t,y)\mto\gamma(t)_\sharp(y)$.
If $\phi\colon U_\phi\to V_\phi\sub E$ is a chart of~$M$,
consider the corresponding function
\[
f_\phi\colon [a,b]\times V_\phi\to E,\quad (t,z)\mto d\phi(f(t,\phi^{-1}(z))),
\]
as in Remark~\ref{mfdlocalize}.
Given $\bar{t}\in [a,b]$
and $\bar{z}\in V_\phi$, let $\bar{y}:=\phi^{-1}(\bar{z})$.
Let $X$ be the modelling space of~$G$
and $\psi\colon U_\psi\to V_\psi\sub X$
be a chart of~$G$ such that $\eta(\bar{t})\in U_\psi$.
There exists an open $\bar{y}$-neighbourhood $A\sub U_\phi$
and an open $\eta(\bar{t})$-neighbourhood $B\sub U_\psi$
such that
\[
\sigma(A\times B^{-1}B)\sub U_\phi.
\]
There exists an open $\bar{y}$-neighbourhood $K\sub A$
and an open $\eta(\bar{t})$-neighbourhood
$L\sub B$ such that
\[
\sigma(K\times L^{-1}L)\sub A.
\]
Then $V:=\phi(A)$ and $Y:=\phi(K)$ are open neighbourhoods
of $\bar{z}$ in~$V_\phi$ such that $Y\sub V$.
There exists a compact interval
$I\sub [a,b]$ which is a neighbourhood of~$\bar{t}$ in $[a,b]$,
such that $\eta(I)\sub L$.
We now show that
\[
\Phi\colon I\times I\times V\to V_\phi,\quad
(t,t_0,z_0)\mto \Phi_{t,t_0}(z_0):=\phi (\Fl_{t,t_0}(\phi^{-1}(z_0)))
\]
satisfies the conditions (a), (b), and (c)$'$
of Definition~\ref{defn-loc-flo-2} for $k=\infty$,
with~$f_\phi$ in place of~$f$
and $(\bar{t},\bar{z})$ in place of
$(\bar{t},\bar{y})$.
If this is true, then each $f_\phi$ (and hence $f$)
admits local flows which are pullbacks of $C^\infty$-maps,
whence $\dot{y}(t)=f(t,y(t))$ satisfies
local uniqueness
(see Lemma~\ref{secompl-flo})
and existence of Carath\'{e}odory
solutions, with maximal flow
the map $\Fl$ already introduced.\\[2.3mm]
For $t_0\in I$ and $z_0\in V$, the map
$I\to U_\phi$, $t\mto \Fl_{t,t_0}(\phi^{-1}(z_0))$
is a Carath\'{e}odory solution to (\ref{onceu})
with $y_0:=\phi^{-1}(z_0)$;
using \ref{chainRNEW},
we deduce that $I\to V_\phi$, $t\mto\Phi_{t,t_0}(z_0)$
is a Carath\'{e}odory solution to
$\dot{z}(t)=f_\phi(t,z(t))$, $z(t_0)=z_0$.
Thus condition~(a)
in Definition~\ref{defn-loc-flo-2} is satisfied,
and~(b) follows from~(\ref{propaga}). 
To verify condition~(c)$'$, we shall use the smooth mappings
\[
\Theta\colon L\times L\times A\to U_\phi,
\quad (g,h,y)\mto\sigma(y,h^{-1}g)
\]
and
\[
\Psi\colon \psi(L)\times \psi(L)\times V\to E,\quad (x,y,z)\mto
\phi(\Theta(\psi^{-1}(x),\psi^{-1}(y),\phi^{-1}(z))).
\]
Then $\Phi(t,t_0,z)=\Psi(\alpha(t),\beta(t_0),z)$
for all $(t,t_0,z)\in I\times I\times V$ using the absolutely continuous
functions $\alpha,\beta\colon I\to \psi(L)\sub X$,
\[
\alpha:=\psi\circ\eta|_I\quad\mbox{and}\quad
\beta:=\psi\circ\eta|_I.
\]
Let $q\colon E\to F$ be a continuous linear map to a Banach space~$F$
and $C\sub V$ be a compact subset.
By Lemma~\ref{factorCk},
there exist
continuous linear maps
$p_1\colon X\to Z_1$, $p_2\colon X\to Z_2$
and $p\colon E\to Z$ to Banach spaces
$Z_1$, $Z_2$, and $Z$,
open subsets $Q_1\sub Z_1$, $Q_2\sub Z_2$, and $Q\sub Z$,
open subsets $U_1\sub \psi(L)$, $U_2\sub \psi(L)$,
and $V_q\sub V$, and a $C^1$-function
\[
\Psi_q\colon Q_1\times Q_2\times Q\to F
\]
such that
\[
\alpha(I)\sub U_1,\quad \beta(I)\sub U_2,\quad
C\sub V_q,
\]
\[
p_1(U_1)\sub Q_1,\quad p_2(U_2)\sub Q_2,\quad p(V_q)\sub Q,
\]
and
\[
q(\Psi(x,y,z))=\Psi_q(p_1(x),p_2(y),p(z))\quad\mbox{for all
$\,(x,y,z)\in U_1\times U_2\times V_q$.}
\]
Note that $p_1\circ\alpha=p_1\circ\psi\circ \eta|_I$
is an absolutely continuous map to~$Z_1$ and
\[
(p_1\circ\alpha)'=[t\mto p_1\, d\psi \, T\lambda_{\eta(t)}\gamma(t)],
\]
using~\ref{chainRNEW}. Since $Z_1$ is a Banach space,
we therefore find a Borel set $I_q\sub I$
with $\lambda_1(I\setminus I_q)=0$
such that $(p_1\circ \alpha)'(t)$ exists for all
$t\in I_q$, and is given by
\[
(p_1\circ \alpha)'(t)=
p_1(d\psi \, T\lambda_{\eta(t)}\gamma(t)).
\]
Let $(t_0,z_0)\in I\times V_q$ and $y_0:=\phi^{-1}(z_0)$.
Then
\[
\kappa\colon I\to F,\quad
t\mto q(\Phi_{t,t_0}(z_0))
=\Psi_q(p_1(\alpha(t)),p_2(\beta(t_0)),p(z_0))
\]
is absolutely continuous
and differentiable at each $t\in I_q$ (see \ref{C1actsAC}), with
\begin{eqnarray*}
\kappa'(t)
&= &
d_1\Psi_q(p_1(\alpha(t)),p_2(\beta(t_0)),p(z_0);
p_1\, d \psi \, T\lambda_{\eta(t)}\gamma(t))\\
&=&
d_1(q\circ\Psi)(\alpha(t),\beta(t_0),z_0;d\psi \, T\lambda_{\eta(t)}\gamma(t))\\
&=&
d (q\circ \phi\circ \Theta(\cdot,\eta(t_0),y_0))T\lambda_{\eta(t)}\gamma(t)\\
&=&
q \,d\phi \, T\sigma_{\sigma(y_0,\eta(t_0)^{-1})}T\lambda_{\eta(t)}\gamma(t)\\
&=& q \, d\phi \, \gamma(t)_\sharp(\phi^{-1}(\Phi_{t,t_0}(z_0)))
\;=\,
q(f_\phi(t,\Phi_{t,t_0}(z_0)))
\end{eqnarray*}
at $t\in I_q$,
using that
$\Theta(\cdot,\eta(t_0),y_0)=\sigma(y_0,\eta(t_0)^{-1}\cdot)
=\sigma_{\sigma(y_0,\eta(t_0)^{-1})}$
and
\begin{eqnarray*}
T\sigma_{\sigma(y_0,\eta(t_0)^{-1})} T\lambda_{\eta(t)}\gamma(t)
&=&T\sigma_{\sigma(y_0,\eta(t_0)^{-1}\eta(t))}\gamma(t)=\gamma(t)_\sharp(\Fl_{t,t_0}(y_0))\\
&=&\gamma(t)_\sharp(\phi^{-1}(\Phi_{t,t_0}(z_0))).
\end{eqnarray*}
Thus condition~(c)$'$ is verified.
\end{proof}
\begin{rem}\label{givesC}
Lemma~\ref{approx-stair} remains valid if,
more generally, $E$ is a locally convex space
and $\gamma\colon [a,b]\to E$ a Lusin measurable
$\cL^1$-function.\\[2.3mm]
[To see this, let $(K_n)_{n\in\N}$
be a sequence of compact subsets $K_n\sub [a,b]$
such that $\gamma|_{K_n}$ is continuous
and $A:=\bigcup_{n\in\N}K_n$ satisfies $\lambda_1([a,b]\setminus A)=0$.
Let $c\in\gamma([a,b])$.
If we define $\bar{\gamma}(t):=\gamma(t)$ if $t\in A$,
$\bar{\gamma}(t):=c$ if $t\in[a,b]\setminus A$,
then $\bar{\gamma}\colon [a,b]\to E$
is Lusin measurable and Borel measurable,
and $\gamma(t)=\bar{\gamma}(t)$ almost everywhere.
In the proof, we replace $\gamma$ with $\bar{\gamma}$;
then $\bar{\gamma}-\gamma_1$, $\gamma_1-\gamma_2$,
$\gamma_2-\eta$, and $\eta-\theta_k$
are Borel measurable by Lemma~\ref{compo-prod}.
Eventually, $\|\gamma-\eta\|_{\cL^1,q}\leq \|\gamma-\bar{\gamma}\|_{\cL^1,q}
+\|\bar{\gamma}-\eta\|_{\cL^1,q}=\|\bar{\gamma}-\eta\|_{\cL^1,q}\leq\ve$
as $\|\gamma-\bar{\gamma}\|_{\cL^1,q}=0$. Likewise,
$\|\bar{\gamma}-\theta\|_{\cL^1,\ve}\leq\ve$
implies $\|\gamma-\theta\|_{\cL^1,q}\leq\ve$.]\\[2.3mm]
Having generalized Lemma~\ref{approx-stair},
the proof of Theorem~\ref{thmC}
remains valid for~$G$ and~$M$ modelled on sclc-spaces.
\end{rem}
\begin{rem}\label{givesDE}
Theorem~\ref{thmD}
remains valid if $G$ is an $L^p$-regular
Lie group modelled on an sclc-space;
simply replace $\cE$ with $L^p$
in the proof of Theorem~\ref{L1cont};
the subdivision property
for $L^p$-spaces based on Lusin measurability
is provided in~\cite[Lemma~4.3.12]{Nik}.
Having generalized Theorem~\ref{thmD},
the proof of Theorem~\ref{thmE}
is unchanged for~$G$ and~$M$
modelled on sclc-spaces.
\end{rem}
\begin{rem}\label{furth-var}
(a) In \cite[1.40]{MeR}, $L^p$-spaces associated with Borel
measurable $E$-valued
functions have also been considered
if an sclc-space~$E$
has the Fr\'{e}chet exhaustion property
(recalled in \ref{theFEP}),
as well as $L^p$-regularity
for Lie groups modelled on such spaces.
As observed in \cite[Remark~4.1.16]{Nik}, these $L^p$-spaces
are isomorphic to those based on Lusin measurable
maps (and $L^p$-regularity as in \cite{MeR}
is then equivalent to that in~\cite{Nik}).
We therefore need not develop further variants
of our current results for (FEP)-spaces,
they are covered by the Lusin theory.\medskip

\noindent
(b) If $E$ is a locally convex space, we recalled
in \ref{theLRC} the vector spaces $\cL^\infty_{rc}([a,b],E)$
and $L^\infty_{rc}([a,b],E)$
introduced in~\cite{Mea}.
Any such $\gamma$ is the uniform limit of a sequence
of measurable functions with finite image (see \cite[Proposition~3.18]{Mea}),
and it is Lusin measurable;
we can interpret $L^\infty_{rc}([a,b],E)$
with a vector subspace of $L^\infty([a,b],E)$
in the Lusin sense. Now the point is that
sequential completeness of~$E$
is unnecessary to ensure that
primitives $\eta\colon [a,b]\to E$,
\[
\eta(t):= c+\int_a^t\gamma(s)\,ds
\]
(which we call $AC_{L^\infty_{rc}}$-functions)
exist for $\gamma\in \cL^\infty_{rc}([a,b],E)$; they exist whenever~$E$
is integral complete in the sense recalled in~\ref{intecompl}.\\[2.3mm]
An $\cL^\infty_{rc}$-function $\gamma\colon [a,b]\to E$
is called \emph{regulated} if $\gamma$ is the uniform limit
of a sequence of $E$-valued staircase functions on~$[a,b]$;
following~\cite[1.31]{MeR}, we let $\cR([a,b],E)$
be the vector space of all
such functions and $R([a,b],E)\sub L^\infty_{rc}([a,b],E)$
be the corresponding vector subspace.
Note that $\cR([a,b],E)$ contains all
piecewise continuous functions,
continuous functions, and staircase functions.
See \cite{MeR} for corresponding concepts
of $L^\infty_{rc}$-regular Lie groups and $R$-regular
Lie groups modelled on integral complete locally convex spaces.\\[2.3mm]
All of our results for $L^p$ with $p=\infty$
and corresponding absolutely continuous functions
remain valid for integral complete locally convex spaces
and modelling spaces in place of sclc-spaces,
by straightforward adaptations.
Notably, $\cE=L^\infty_{rc}$ and $\cE=R$
are possible choices for $\cE$ in Theorem~\ref{L1cont}
(as it stands).
\end{rem}
In Theorems~\ref{thmB}
and~\ref{thmBgen}
which are the basis for our work,
the maximal flow is globally defined,
on all of $[a,b]\times [a,b]\times M$.
We mention two basic general facts concerning
maximal flows, which may be of interest elsewhere.
\begin{numba}\label{the-situ-nw}
Let $M$ be a $C^1$-manifold modelled on an sclc-space,
$J\sub\R$ be a non-degenerate interval
and $f\colon W\to TM$ be a function on a subset
$W\sub J\times M$
such that $f(t,y)\in T_yM$ for all $(t,y)\in W$.
Assume that the differential equation {\rm(\ref{ode-mfd})}
satisfies both local existence of Carath\'{e}odory
solutions and local uniqueness.
Let $\Omega\sub J\times J\times M$,
the maximal flow $\Fl\colon \Omega\to M$,
and $\gamma_{t_0,y_0}\colon I_{t_0,y_0}\to M$ for $(t_0,y_0)\in W$
be as in Definition~\ref{max-flow}
(cf.\ also Remark~\ref{all-holds}).
For $t,t_0\in J$, we define
\[
\Omega_{t,t_0}:=\{y_0\in M\colon (t,t_0,y_0)\in\Omega\}
\]
and abbreviate $\Fl_{t,t_0}(y_0):= \Fl(t,t_0,y_0)$
for $y_0\in\Omega_{t,t_0}$.
\end{numba}
The following fact can be proved like
Lemmas 2.4.9 and 2.5.7 in~\cite{GaN}.
\begin{la}
In the situation of {\rm\ref{the-situ-nw}}, we have:
\begin{itemize}
\item[\rm(a)]
For $(t_0,y_0)\in W$ and $t_1 \in
I_{t_0,y_0}$, we have $\gamma_{t_0,y_0} = \gamma_{t_1,y_1}$
with $y_1 := \gamma_{t_0,y_0}(t_1) = \Fl_{t_1,t_0}(y_0)$.
\item[\rm(b)]
If $t_2 \in I_{t_1,y_1}$ in {\rm(a)}, then $t_2 \in I_{t_0,y_0}$
and $\Fl_{t_2,t_0}(y_0) = \Fl_{t_2,t_1}(\Fl_{t_1,t_0}(y_0))$.
\item[\rm(c)]
For all $t_1,t_0 \in J$, the map $\Fl_{t_1,t_0}\colon \Omega_{t_1,t_0}\to M$ is injective,
$\Fl_{t_1,t_0}(\Omega_{t_1,t_0}) = \Omega_{t_0,t_1}$ holds,
and $\Fl_{t_0,t_1} = (\Fl_{t_1,t_0})^{-1}$. $\,\square$
\end{itemize}
\end{la}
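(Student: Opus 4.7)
The plan is to derive everything from the uniqueness and maximality provided by Lemma~\ref{max-sol} (together with its analogue in the sclc setting, cf.\ Remark~\ref{all-holds}), essentially mimicking the arguments of \cite[Lemmas 2.4.9 and 2.5.7]{GaN}.

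For (a), I would first observe that if $t_1\in I_{t_0,y_0}$ and $y_1:=\gamma_{t_0,y_0}(t_1)$, then $\gamma_{t_0,y_0}\colon I_{t_0,y_0}\to M$ is itself a Carath\'{e}odory solution to the initial value problem $\dot{y}(t)=f(t,y(t))$, $y(t_1)=y_1$, since satisfying the differential equation almost everywhere and being absolutely continuous are properties of~$\gamma_{t_0,y_0}$ that persist regardless of which point of the domain we single out as ``initial.'' Maximality of $\gamma_{t_1,y_1}$ therefore yields $I_{t_0,y_0}\sub I_{t_1,y_1}$ and $\gamma_{t_0,y_0}=\gamma_{t_1,y_1}|_{I_{t_0,y_0}}$. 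For the reverse inclusion, the piecewise-defined map agreeing with $\gamma_{t_0,y_0}$ on $I_{t_0,y_0}\cap (-\infty,t_1]$ and with $\gamma_{t_1,y_1}$ on $I_{t_1,y_1}\cap[t_1,\infty)$ (and symmetrically on the other side of~$t_1$) is absolutely continuous on every compact subinterval, agrees with a solution on each piece, and thus is a Carath\'{e}odory solution of the IVP at $(t_0,y_0)$; maximality of $\gamma_{t_0,y_0}$ forces $I_{t_1,y_1}\sub I_{t_0,y_0}$. Hence $I_{t_0,y_0}=I_{t_1,y_1}$ and $\gamma_{t_0,y_0}=\gamma_{t_1,y_1}$.

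Part (b) is then almost immediate from (a): the hypothesis $t_2\in I_{t_1,y_1}=I_{t_0,y_0}$ delivers the first claim, and evaluating the common maximal solution gives
\[
\Fl_{t_2,t_0}(y_0)=\gamma_{t_0,y_0}(t_2)=\gamma_{t_1,y_1}(t_2)=\Fl_{t_2,t_1}(y_1)=\Fl_{t_2,t_1}(\Fl_{t_1,t_0}(y_0)).
\]

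For (c), injectivity of $\Fl_{t_1,t_0}$ follows by applying (a) to two points $y_0,y_0'\in\Omega_{t_1,t_0}$ with the same image $y_1:=\Fl_{t_1,t_0}(y_0)=\Fl_{t_1,t_0}(y_0')$: we get $\gamma_{t_0,y_0}=\gamma_{t_1,y_1}=\gamma_{t_0,y_0'}$, and evaluation at $t_0$ yields $y_0=y_0'$. To see $\Fl_{t_1,t_0}(\Omega_{t_1,t_0})\sub \Omega_{t_0,t_1}$, take $y_0\in\Omega_{t_1,t_0}$ and set $y_1:=\Fl_{t_1,t_0}(y_0)$; by (a), $I_{t_1,y_1}=I_{t_0,y_0}\ni t_0$, so $y_1\in\Omega_{t_0,t_1}$, and $\Fl_{t_0,t_1}(y_1)=\gamma_{t_1,y_1}(t_0)=\gamma_{t_0,y_0}(t_0)=y_0$. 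The symmetric argument with $t_0$ and $t_1$ interchanged shows $\Fl_{t_0,t_1}(\Omega_{t_0,t_1})\sub\Omega_{t_1,t_0}$ and $\Fl_{t_1,t_0}\circ\Fl_{t_0,t_1}=\id_{\Omega_{t_0,t_1}}$, giving the bijection $\Fl_{t_1,t_0}(\Omega_{t_1,t_0})=\Omega_{t_0,t_1}$ and the identification $\Fl_{t_0,t_1}=(\Fl_{t_1,t_0})^{-1}$.

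The only subtle point is the gluing step in~(a): one must check that concatenating two absolutely continuous maps that agree at the joining point~$t_1$ and each satisfy the ODE almost everywhere produces a single absolutely continuous function satisfying the ODE almost everywhere on the union of the domains. In the Fr\'{e}chet case this is routine (combine derivatives via the definition of absolute continuity), and in the sclc case one uses the equivalent description via primitives of locally integrable functions (\ref{substitute} and \ref{chainRNEW}); neither verification requires more than standard chart-localisation, so I expect no serious obstacle beyond writing these bookkeeping details carefully.
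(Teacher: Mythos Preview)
Your proposal is correct and follows exactly the route the paper indicates: the paper gives no proof beyond citing \cite[Lemmas~2.4.9 and 2.5.7]{GaN}, and you have unpacked that argument faithfully. One minor simplification: the gluing step in~(a) is unnecessary, since once you have $I_{t_0,y_0}\sub I_{t_1,y_1}$ and $\gamma_{t_0,y_0}=\gamma_{t_1,y_1}|_{I_{t_0,y_0}}$, the function $\gamma_{t_1,y_1}$ itself is already a Carath\'{e}odory solution through $(t_0,y_0)$, so maximality of $\gamma_{t_0,y_0}$ gives the reverse inclusion directly.
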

\begin{defn}
Let $J\sub\R$ be a non-degenerate interval,
$M$ be a $C^1$-manifold modelled on an sclc-space
and $f\colon W\to TM$ be a function on an open subset $W \sub J \times M$
such that $f(t,y)\in T_yM$ for all $(t,y) \in W$.
We say that the differential equation $\dot{y}(t) = f(t,y(t))$
\emph{admits local $C^0$-flows in the sense of Carath\'{e}odory
solutions}
if, for all $(\bar{t},\bar{y}) \in W$, there exist a relatively open interval $I \sub J$
with $\bar{t} \in I$, an open $\bar{y}$-neighbourhood~$V$
in $M$ and a continuous function $\Phi\colon I \times I \times V \to M$
which satisfies conditions~(a) and~(b)
stated in Definition~\ref{def-loc-flo-mfd}.
\end{defn}
The following fact can be proved like
Theorem~2.5.15 in \cite{GaN}
(taking a singleton set of parameters
in \cite[Theorem~2.5.18]{GaN}
and reading `solution' as
`Carath\'{e}odory solution').
\begin{prop}
Let $J \sub\R$ be a non-degenerate interval,
$M$ be a $C^1$-manifold modelled on an sclc-space
and $f \colon W \to TM$ be a function on an open subset $W \sub J\times M$
such that $f(t,y) \in T_yM$ for all $(t,y) \in W$.
Assume that $\dot{y}(t) = f(t,y(t))$ satisfies local
uniqueness of Carath\'{e}odory solutions and
admits local $C^0$-flows
in the sense of Carath\'{e}odory solutions.
Let $\Fl\colon \Omega\to M$ be the maximal flow of
$\dot{y}(t) = f(t,y(t))$. Then $\Omega$ is open in $J\times J\times M$
and $\Fl\colon \Omega\to M$ is continuous.
Moreover, $\Omega_{t,t_0}$
is open in $M$ for all $t,t_0\in J$
and the map $\Fl_{t,t_0}\colon \Omega_{t,t_0}\to\Omega_{t_0,t}$
is a homeomorphism, with inverse $\Fl_{t_0,t}$. $\,\square$
\end{prop}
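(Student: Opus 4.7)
Fix $(t^*,t^*_0,y^*_0)\in\Omega$. Set $\gamma:=\gamma_{t^*_0,y^*_0}$ and let $K\sub I_{t^*_0,y^*_0}$ be the compact interval with endpoints $t^*_0$ and $t^*$. Since $\gamma$ is continuous, $\gamma(K)$ is compact. By hypothesis, for every $s\in K$ there is a local $C^0$-flow $\Phi^s\colon I_s\times I_s\times V_s\to M$ around $(s,\gamma(s))$ satisfying (a) and (b) of Definition~\ref{def-loc-flo-mfd}. By local uniqueness of Carath\'{e}odory solutions (property (a)), $\Phi^s_{t,t_0}(y_0)=\Fl_{t,t_0}(y_0)$ whenever both sides are defined and the first factor traces a solution in $V_s$; in particular, $\Phi^s_{t,t^*_0}(y^*_0)=\gamma(t)$ for $t$ near~$s$.

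The plan is to use compactness to reduce to a finite chain and then to patch the local flows by an iterated composition. By compactness of~$K$, pick a subdivision $t^*_0=s_0,s_1,\ldots,s_n=t^*$ along~$K$ and open intervals $J_i\sub I_{s_i}$ with $s_{i-1},s_i\in J_i$, together with the associated neighbourhoods $V_{s_i}$ of $\gamma(s_i)$, such that $\gamma([s_{i-1},s_i])\sub V_{s_i}$. Now shrink backwards: pick an open $\gamma(s_n)$-neighbourhood $Y_n\sub V_{s_n}$; since $\Phi^{s_n}_{\cdot,s_{n-1}}$ is continuous with $\Phi^{s_n}_{s_n,s_{n-1}}(\gamma(s_{n-1}))=\gamma(s_n)$, there are an open interval $J^*_n\ni(s_{n-1},s_n)$ (inside $J_n\times J_n$) and an open $\gamma(s_{n-1})$-neighbourhood $Y_{n-1}\sub V_{s_{n-1}}$ with $\Phi^{s_n}(J^*_n\times Y_{n-1})\sub Y_n$. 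Iterating this shrinking backwards through $s_{n-2},\ldots,s_0$ produces a nested sequence of neighbourhoods and a neighbourhood $V$ of $y^*_0$ on which the composition
\[
\widetilde{\Phi}(t,t_0,y_0)\defi \Phi^{s_n}_{t,s_{n-1}}\!\circ\Phi^{s_{n-1}}_{s_{n-1},s_{n-2}}\!\circ\cdots\circ\Phi^{s_1}_{s_1,s_0}(y_0)
\]
makes sense for $(t,t_0,y_0)$ in an open neighbourhood $\Upsilon$ of $(t^*,t^*_0,y^*_0)$ in $J\times J\times M$. Property~(b) of each $\Phi^{s_i}$ together with local uniqueness forces $\widetilde{\Phi}=\Fl$ on~$\Upsilon$, so $\Upsilon\sub\Omega$ and $\Fl|_\Upsilon$ is continuous as a composition of continuous maps. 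The main obstacle is the combinatorial bookkeeping in this backward shrinking; one must simultaneously arrange that each $Y_i$ lies in the open set on which the previous composition factor is defined and that the chosen time neighbourhoods contain $(s_{i-1},s_i)$ in their interior, which is exactly what continuity of each $\Phi^{s_i}$ permits.

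For the remaining claims, fix $t,t_0\in J$. Openness of
\[
\Omega_{t,t_0}=\{y_0\in M\colon (t,t_0,y_0)\in\Omega\}
\]
follows from openness of~$\Omega$, and continuity of $\Fl_{t,t_0}\colon\Omega_{t,t_0}\to M$ from continuity of~$\Fl$. Part~(c) of the preceding lemma gives $\Fl_{t,t_0}(\Omega_{t,t_0})=\Omega_{t_0,t}$ and $\Fl_{t_0,t}\circ\Fl_{t,t_0}=\id_{\Omega_{t,t_0}}$, $\Fl_{t,t_0}\circ\Fl_{t_0,t}=\id_{\Omega_{t_0,t}}$. Thus $\Fl_{t,t_0}$ is a bijection onto $\Omega_{t_0,t}$ whose inverse $\Fl_{t_0,t}$ is also continuous, i.e., $\Fl_{t,t_0}$ is a homeomorphism.
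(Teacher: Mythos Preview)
The paper does not give a proof here; it only cites Theorem~2.5.15 (and 2.5.18) of \cite{GaN} and says the argument transfers verbatim with ``Carath\'{e}odory solution'' in place of ``solution''. Your argument is exactly the standard chain-of-local-flows proof that underlies those results: cover the compact arc $\gamma(K)$ by finitely many local-flow boxes, shrink backwards so the successive images stay inside the next box, and read off openness of $\Omega$ and continuity of $\Fl$ from the resulting continuous composite. So your approach is the intended one.

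One genuine slip to fix: in your displayed formula for $\widetilde{\Phi}(t,t_0,y_0)$ the innermost factor is $\Phi^{s_1}_{s_1,s_0}$ with the fixed $s_0=t^*_0$, so as written $\widetilde{\Phi}$ does not depend on $t_0$ at all, and you cannot conclude that an open neighbourhood of $(t^*,t^*_0,y^*_0)$ in $J\times J\times M$ lies in $\Omega$. The innermost factor must be $\Phi^{s_1}_{s_1,t_0}(y_0)$ (this is exactly what your backward-shrinking paragraph sets up, since you arranged $J^*_1$ to be a neighbourhood of $(s_0,s_1)$ in the time variables). With that correction the composite depends continuously on $(t,t_0,y_0)$, and the gluing argument via Lemma~\ref{mfd-loc-glob} shows it agrees with $\Fl$ on a full neighbourhood $\Upsilon$, as you claim. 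A minor notational point: your ``open interval $J^*_n\ni(s_{n-1},s_n)$ inside $J_n\times J_n$'' should be phrased as an open neighbourhood of the pair $(s_{n-1},s_n)$ in $J_n\times J_n$ (or better, a product of two open intervals), not an interval.
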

\appendix
\section{Local factorization of {\boldmath$C^{k+1}$}-mappings over Banach spaces}\label{appA}
The following lemma is used in the proof of
Lemma~\ref{secompl-flo}.
See~\cite{Cha}
for a precursor in the case $k=1$;
cf.\
also
\cite[\S7]{Hyp} and \cite[Lemma~1.62]{MeR}
for related results.
\begin{la}\label{factorCk}
Let $E$ be a locally convex space,
$(Z,\|\cdot\|)$ be a Banach space,
$U\sub E$ be open, $K\sub U$ a compact subset,
$k\in\N_0$
and $f\colon U\to Z$ be a $C^{k+1}$-map.
Then the following holds:
\begin{itemize}
\item[\rm(a)]
There exist a continuous linear map
$q\colon E\to F$ to a Banach space~$F$,
a $C^k$-function
\[
g\colon Q\to Z
\]
on an open subset $Q\sub F$,
and
an open subset $V\sub U$ with $q(V)\sub Q$
such that $K\sub V$ and
\[
g\circ q|_V=f|_V.
\]
\item[\rm(b)]
If $E=E_1\times \cdots\times E_m$
with locally convex spaces $E_1,\ldots, E_m$
and $U=U_1\times\cdots\times U_m$
with open subsets $U_j\sub E_j$
for $j\in\{1,\ldots,m\}$,
we can achieve in {\rm(a)}
that $V=V_1\times\cdots\times V_m$
with open subsets $V_j\sub U_j$,
$F=F_1\times\cdots\times F_m$
with Banach spaces $F_j$, $Q=Q_1\times\cdots\times Q_m$
with open subsets $Q_j\sub F_j$
and $q=q_1\times\cdots\times q_m$
with continuous linear maps $q_j\colon E_j\to F_j$ such that
$q_j(V_j)\sub Q_j$, for all $j\in\{1,\ldots, m\}$.
\end{itemize}
\end{la}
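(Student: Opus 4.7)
The plan is to produce a single continuous seminorm $p$ on~$E$ that simultaneously controls $f$ and all its derivatives $d^jf$ up to order $k+1$ on a neighbourhood of~$K$; the map $q\colon E\to F$ will then be the composition of the quotient $\pi_p\colon E\to E/p^{-1}(0)=:F_0$ with the canonical inclusion into the Banach completion $F$ of~$F_0$.

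For each $j\in\{1,\ldots,k+1\}$, the map $d^jf\colon U\times E^j\to Z$ is continuous and $j$-linear in its $E^j$-argument. Since $d^jf$ vanishes whenever any $E$-entry is zero, the compactness of~$K$ combined with continuity at $K\times\{0\}^j$ supplies a convex balanced open neighbourhood $V_0\sub U$ of~$K$, a continuous seminorm $p_j$ on~$E$, and a constant $C_j>0$ with
\[
\|d^jf(x)(h_1,\ldots,h_j)\|\le C_j\, p_j(h_1)\cdots p_j(h_j)\quad\mbox{for all $x\in V_0$ and $h_i\in E$;}
\]
the same $V_0$ can be used for all $j\le k+1$. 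Setting $p:=p_1+\cdots+p_{k+1}$, the mean value theorem applied to $d^{j-1}f$ on the convex set~$V_0$ (using the bound on $d^jf$) yields
\[
\|d^{j-1}f(x')(h_1,\ldots,h_{j-1})-d^{j-1}f(x)(h_1,\ldots,h_{j-1})\|\le C_j\,p(x'-x)\,p(h_1)\cdots p(h_{j-1}),
\]
so $d^{j-1}f(x)(h_1,\ldots,h_{j-1})$ depends only on the classes $(\pi_p(x),\pi_p(h_1),\ldots,\pi_p(h_{j-1}))$. Taking $j=1$ yields $f|_{V_0}=g_0\circ\pi_p$ for some Lipschitz $g_0\colon \pi_p(V_0)\to Z$; each $d^jf$ similarly descends to a continuous $j$-linear form $D^jg_0(y)\colon F_0^j\to Z$ of operator norm at most~$C_j$, and $y\mto D^jg_0(y)$ is Lipschitz from $\pi_p(V_0)$ into the Banach space of continuous $j$-linear maps, for $j\le k$.

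Bounded multilinear maps on $F_0^j$ extend uniquely to bounded multilinear maps on $F^j$; hence for $j\le k$ the Lipschitz maps $y\mto D^jg_0(y)$ and the map $g_0$ itself extend continuously to the closure of $\pi_p(V_0)$ in~$F$. Choose an open $Q\sub F$ with $q(K)\sub Q$ contained in that closure (e.g.\ cover $q(K)$ by finitely many small $F$-balls, each lying in the closure of $\pi_p(V_0)$) and set $V:=V_0\cap q^{-1}(Q)$; then $V$ is an open neighbourhood of~$K$ in~$U$ with $q(V)\sub Q$. Iteratively integrating $D^1g_0,\ldots, D^kg_0$ via the fundamental theorem of calculus on $\pi_p(V_0)$ and passing to uniform limits in~$F$, one shows that the continuous extension $g\colon Q\to Z$ of $g_0$ is~$C^k$, with $d^jg$ equal to the continuous extension of $D^jg_0$. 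This establishes~(a).

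Part~(b) is then immediate: every continuous seminorm on $E_1\times\cdots\times E_m$ is dominated by one of the form $(x_1,\ldots,x_m)\mto\tilde p_1(x_1)+\cdots+\tilde p_m(x_m)$, so choosing~$p$ of this form at the outset gives $F_0=\bigoplus_j E_j/\tilde p_j^{-1}(0)$, $F=F_1\times\cdots\times F_m$ with $F_j$ Banach, and $q=q_1\times\cdots\times q_m$; running the construction with the compact product $\mathrm{pr}_1(K)\times\cdots\times\mathrm{pr}_m(K)\supseteq K$ in place of~$K$, and choosing $V_0$ and $Q$ as products, yields the product factorization. The main obstacle is the regularity check in the third paragraph: one must verify that the continuous extensions to~$F$ of the forms $D^jg_0$ really are the derivatives of the extended~$g$, carried out by density approximation inside the integral form of Taylor's theorem. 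The loss of one order of regularity (from $C^{k+1}$ for $f$ to $C^k$ for $g$) reflects the absence of a bound on $d^{k+2}f$: without it, $D^{k+1}g_0$ is merely bounded in~$y$ and need not extend continuously past $\pi_p(V_0)$.
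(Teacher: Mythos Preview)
Your overall strategy---factor through the completion $\wt{E}_p$ of $E/p^{-1}(0)$ for a single seminorm $p$ controlling all $d^jf$, $j\le k+1$, near~$K$---matches the paper's, and the reduction of part~(b) to a product choice of~$p$ is also the same. There is, however, a genuine gap: you assume the neighbourhood $V_0$ of~$K$ can be taken convex (and balanced), but this is false in general---take $E=\R$, $U=\,]{-1},1[\,\cup\,]2,4[$, $K=\{0,3\}$. Convexity of~$V_0$ is what drives your global Lipschitz estimate via the mean value theorem, and that estimate underlies both the descent of $d^{j-1}f$ to $\pi_p(V_0)$ and the uniform continuity needed to extend to the completion. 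The descent survives without it (if $p(x-x')=0$ and $x=k+w\in K+B^p_1(0)$, then also $x'\in k+B^p_1(0)$, and that ball \emph{is} convex), but the global Lipschitz bound does not: for $x,x'\in V_0$ with $p(x-x')$ large the segment need not remain in any set where $df$ is controlled. The paper repairs this by treating two cases: when $p(x-x')<1$ both points lie in a single convex ball $k+2B^p_1(0)\sub U$ and the mean value theorem applies there; when $p(x-x')\ge 1$ one uses the crude bound $\|f(x)-f(x')\|\le 2\sup\|f(V_0)\|\le 2M\cdot p(x-x')$ and absorbs the constant into~$p$.

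Organizationally the paper proceeds by induction on~$k$ rather than bounding all derivatives at once. The base case $k=0$ is your Lipschitz-and-extend argument for~$f$ itself; the induction step applies the inductive hypothesis to $df\colon U\times E\to Z$ with compact set $K\times\{0\}$, obtaining a $C^k$ factorization~$h$ of~$df$, and then shows that the merely continuous extension~$g$ from the base case is in fact $C^1$ with $dg=h$ (hence $C^{k+1}$). This reduces your final verification---``iteratively integrating $D^1g_0,\dots,D^kg_0$ and passing to uniform limits''---to a single differentiation, carried out via the parameter integral $\int_0^1 h(x+sty,y)\,ds$ and a density argument passing from $E_p$ to $\wt{E}_p$. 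Your direct approach can be made to work too, but that last step needs to be fleshed out along exactly these lines.
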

The following concepts are useful for the proof.
\begin{numba}\label{associated-ban}
If $E$ is a locally convex space
and $p$ a continuous seminorm on~$E$,
then $N:=\{x\in E\colon p(x)=0\}$
is a vector subspace of~$E$ and $\|x+N\|_p:=p(x)$
is well defined for $x+N\in E_p:=E/N$
and provides a norm $\|\cdot\|_p$ on $E_p$.
If $\pi_p\colon E\to E_p$, $x\mto x+N$ is the canonical map,
then $\|\pi_p(x)\|_p=p(x)$ for all $x\in E$,
entailing that
\begin{equation}\label{image-ball}
\pi_p(B^p_\ve(x))=B^{\|\cdot\|_p}_\ve(\pi_p(x))\mbox{ for all $x\in E$
and $\ve>0$.}
\end{equation}
We let $\wt{E}_p$ be a completion of
$E_p$ with $E_p\sub\wt{E}_p$
and use the same notation, $\|\cdot\|_p$,
for the norm on $\wt{E}_p$.
We write $\wt{\pi}_p$ for $\pi_p$,
considered as a map to $\wt{E}_p$.
\end{numba}
Lemma~\ref{factorCk}\,(a) follows immediately
from the next lemma, which we prove instead.
And also Lemma~\ref{factorCk}\,(b)
follows:
If $E=E_1\times\cdots\times E_m$
and $U=U_1\times\cdots\times U_m$,
we let $\pr_j\colon E\to E_j$ be the projection onto the $j$th
component for $j\in\{1,\ldots, m\}$ and set $K_j:=\pr_j(K)\sub U_j$.
After increasing~$K$, we may assume that $K=K_1\times\cdots\times K_m$.
For~$P$ as in Lemma~\ref{easierfactor},
there exists a continuous seminorm $p\geq P$ on~$E$ satisfying
\[
p(x_1,\ldots, x_m)=\max\{p_1(x_1),\ldots,p_m(x_m)\}
\mbox{ for all $(x_1,\ldots, x_m)\in E$}
\]
with continuous seminorms $p_j$ on~$E_j$
(for example, we can take $p_j(x_j):=$ $mP(0,\ldots,0,x_j,0,\ldots,0)$
with $x_j$ in the $j$th slot). For any such, we have
\[
\wt{E}_p\cong \wt{E}_{p_1}\times\cdots\times\wt{E}_{p_m}
\]
in a natural way and $Q_p:=\tilde{\pi}_p(K)+B^{\tilde{E}_p}_1(0)$
corresponds to $Q_1\times\cdots\times Q_m$ with
$Q_j:=\tilde{\pi}_{p_j}(K_j)+B^{\tilde{E}_{p_j}}_1(0)$.
\begin{la}\label{easierfactor}
Let $E$ be a locally convex space,
$(Z,\|\cdot\|)$ be a Banach space,
$U\sub E$ be open, $K\sub U$ a compact subset,
$k\in\N_0$
and $f\colon U\to Z$ be a $C^{k+1}$-map.
Then there exists a continuous
seminorm $P$ on~$E$
with $K+B^P_1(0)\sub U$
with the following property:
For each continuous seminorm $p$ on~$E$
such that $p\geq P$ pointwise,
there exists a $C^k$-map
\[
g\colon Q_p\to Z
\]
on the open subset $Q_p:=\tilde{\pi}_p(K)+B^{\wt{E}_p}_1(0)$
of $\wt{E}_p$ such that $g\circ \tilde{\pi}_p|_{V_p}=f|_{V_p}$
holds for the open subset $V_p:=K+B^p_1(0)\sub U$
with $K\sub V_p$.
\end{la}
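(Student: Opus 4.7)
The plan is to find a continuous seminorm $P$ that controls~$f$ and all its differentials up to order $k+1$ on a fixed neighbourhood of~$K$, and then show that for every $p\geq P$ the restriction $f|_{V_p}$ factors through $\wt\pi_p$; the resulting map on $\wt\pi_p(V_p)$ is locally Lipschitz in $\|\cdot\|_p$ and therefore extends uniquely to a continuous map on all of~$Q_p$, by completeness of~$Z$ and of~$\wt{E}_p$. A parallel factorization of the higher-order differentials of~$f$ supplies candidate derivatives for~$g$.

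To construct~$P$: compactness of~$K$ and openness of~$U$ yield a continuous seminorm $P_0$ on~$E$ with $K+B^{P_0}_1(0)\sub U$. For each $j\in\{1,\ldots,k+1\}$, the differential $d^jf\colon U\times E^j\to Z$ is continuous and $j$-linear in the $E^j$-factor. Since $d^jf$ vanishes on $U\times\{0\}^j$, a standard compactness-plus-multilinearity argument (apply continuity at each point of $K\times\{0\}^j$, cover~$K$ by finitely many resulting neighbourhoods, rescale by the controlling seminorms) furnishes a continuous seminorm $P_j$ on~$E$ with
\[
\|d^jf(y,h_1,\ldots,h_j)\|\leq P_j(h_1)\cdots P_j(h_j)
\]
for all $y\in K+B^{P_0}_1(0)$ and $h_1,\ldots,h_j\in E$. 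Let $P$ be a continuous seminorm dominating $P_0,P_1,\ldots,P_{k+1}$; in particular $K+B^P_1(0)\sub U$.

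Now fix $p\geq P$ and let $V_p:=K+B^p_1(0)$. If $x,y\in V_p$ satisfy $\wt\pi_p(x)=\wt\pi_p(y)$, i.e.\ $p(y-x)=0$, write $x=x_0+h$ with $x_0\in K$, $p(h)<1$; then $y=x_0+(h+(y-x))$ with $p(h+(y-x))=p(h)<1$, so the segment from~$x$ to~$y$ lies in~$V_p$, and the fundamental theorem of calculus yields
\[
f(y)-f(x)=\int_0^1df(x+t(y-x),\,y-x)\,dt,
\]
whose integrand has norm at most $P(y-x)\leq p(y-x)=0$. Hence $f(y)=f(x)$, and the rule $g_0(\wt\pi_p(x)):=f(x)$ defines a map $g_0\colon \wt\pi_p(V_p)\to Z$; by~(\ref{image-ball}), $\wt\pi_p(V_p)=\wt\pi_p(K)+B^{\|\cdot\|_p}_1(0)$ is open in~$E_p$ and dense in~$Q_p$. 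The same estimate applied to pairs $x,y\in V_p$ of the form $x_0+h_1,x_0+h_2$ with a common $x_0\in K$ shows that $g_0$ is $1$-Lipschitz on each ball $\wt\pi_p(x_0)+B^{\|\cdot\|_p}_1(0)\sub\wt\pi_p(V_p)$, hence locally Lipschitz on $\wt\pi_p(V_p)$. By completeness of~$Z$, $g_0$ admits a unique continuous extension $g\colon Q_p\to Z$.

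The main obstacle is to verify that~$g$ is genuinely $C^k$ in the Michal--Bastiani sense, not merely continuous. For each $j\in\{1,\ldots,k\}$ the preceding argument, applied to $d^jf$ and using the bound on $d^{j+1}f$, shows that $d^jf$ descends to a map $\wt\pi_p(V_p)\times(E_p)^j\to Z$ which is locally Lipschitz in each variable with respect to $\|\cdot\|_p$, and therefore extends continuously to a map $\Delta_j\colon Q_p\times(\wt{E}_p)^j\to Z$. One then identifies $\Delta_j$ with the iterated differential $d^jg$ by approximating any $z\in Q_p$ and $w_1,\ldots,w_j\in\wt{E}_p$ by $\wt\pi_p(x_n)$ and $\wt\pi_p(h_{i,n})$, computing iterated difference quotients via the mean-value representation $g(z+tw)-g(z)=\int_0^1\Delta_1(z+stw,w)\,ds$, and passing to the limit using the uniform Lipschitz bounds on the $\Delta_j$. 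The hardest step is establishing \emph{joint} (as opposed to merely separate) continuity of the extended $\Delta_j$ on $Q_p\times(\wt{E}_p)^j$, which is what ultimately allows the identification $\Delta_j=d^jg$ and yields $g\in C^k(Q_p,Z)$.
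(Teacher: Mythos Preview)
Your approach is correct in outline but takes a different route from the paper's proof. You control all differentials $d^jf$ ($1\le j\le k+1$) by a single seminorm up front, factor each $d^jf$ through $\wt\pi_p$ to obtain candidate higher derivatives $\Delta_j$, and then identify $\Delta_j=d^jg$ by mean-value formulas and density. The paper instead proceeds by induction on~$k$: the base case $k=0$ is essentially your Lipschitz extension argument, and in the induction step the hypothesis is applied to $df\colon U\times E\to Z$ (which is $C^{k+1}$) with the compact set $K\times\{0\}$, producing a $C^k$-map $h_r$ on an open subset of $\wt{(E\times E)}_r\cong\wt{E}_p\times\wt{E}_p$ for a suitable product seminorm~$r$. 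One then only has to verify that $dg_p$ exists and coincides with~$h_r$, first on the dense subset $Q_p\cap E_p$ by direct computation, then on all of~$Q_p$ via the parameter-dependent integral $\int_0^1 h_r(x+sty,y)\,ds$. The advantage of the paper's induction is that the $C^k$-regularity of the candidate derivative comes for free from the inductive hypothesis, so at each step only a \emph{first} derivative needs to be identified; your direct approach is conceptually transparent but in the end still requires an iterated identification $\Delta_1=dg$, $\Delta_2=d^2g$, \ldots, which amounts to the same induction unwound.

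Two small points. First, your bound $\|d^jf(y,h_1,\ldots,h_j)\|\le P_j(h_1)\cdots P_j(h_j)$ is asserted for all $y\in K+B^{P_0}_1(0)$, but the compactness argument only delivers it on a possibly smaller neighbourhood of~$K$; this is harmless (enlarge~$P$ accordingly) but should be stated correctly. Second, the ``hardest step'' you flag---joint continuity of the extended~$\Delta_j$---is in fact routine with the bounds you already have: on each ball $\wt\pi_p(x_0)+B^{\|\cdot\|_p}_1(0)$ the map is Lipschitz in the base variable with constant $\le\|w_1\|_p\cdots\|w_j\|_p$ and bounded $j$-linear in the remaining variables, hence uniformly continuous on products of bounded sets, and the unique continuous extension is automatically jointly continuous. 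The genuine work lies rather in the identification $\Delta_1=dg$ at points of $Q_p\setminus E_p$, for which your mean-value integral $g(z+tw)-g(z)=t\int_0^1\Delta_1(z+stw,w)\,ds$ (established first on the dense image and then by continuity) is exactly the right tool---and is precisely what the paper does as well.
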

{\bf Proof of Lemma~\ref{easierfactor}.}
The proof is by induction on~$k$.
The case~$k=0$:
We assume that~$f$ is~$C^1$.
There exists $M\in\;]0,\infty[$
such that $\sup \|f(K)\|<M$.
Let $B\sub Z$ be the open unit ball.
As $df\colon U\times E\to Z$ is continuous and $df(x,0)=0$ for all $x\in U$,
the set $(df)^{-1}(B)$ is an open neighbourhood of $K\times \{0\}$ in $U\times E$.
By the Wallace Lemma, there exist an open subset $O\sub U$ with $K\sub O$
and an open $0$-neighbourhood $W\sub E$ such that $O\times W\sub (df)^{-1}(B)$.
After shrinking~$O$, we may assume that
\begin{equation}\label{upbound}
\|f(x)\|<M\mbox{ for all $x\in O$.}
\end{equation}
After shrinking~$W$, we may assume that $K+2W\sub O$
and $W=B^P_1(0)$ for a continuous seminorm~$P$ on~$E$.
Then
\[
\|df(x,y)\|\leq P(y) \mbox{ for all $x\in O$ and $y\in E$.}
\]
We now define $V:=K+W$.
Let $y,z\in V$. If $z-y\in W$,
pick $x\in K$ such that $y-x\in W$. Then
$z-x=(z-y)+(y-x)\in 2W$ and thus $y,z\in x+2W$,
which is a convex set contained in~$O$.
By the Mean Value Theorem,
\begin{eqnarray*}
\|f(z)-f(y)\|&=& \left\|\int_0^1 df(y+t(z-y),z-y)\,dt\right\|\\
&\leq & \int_0^1\|df(y+t(z-y),z-y)\|\,dt\leq P(z-y).
\end{eqnarray*}
If $z-y\not\in W$, then $P(z-y)\geq 1$
and thus
\[
\|f(z)-f(y)\|\leq\|f(z)\|+\|f(y)\|\leq 2M\leq 2M\, P(z-y).
\]
Thus, after replacing $P$ with $\max\{1,2M\}\,P$
(and shrinking~$W$ and~$V$ accordingly),
we may assume
$\|f(z)-f(y)\|\leq P(z-y)$ for all $y,z\in V$.
Hence
\begin{equation}\label{lip-around-cp}
\|f(z)-f(y)\|\leq p(z-y)
\mbox{ for all $y,z\in V_p$}
\end{equation}
for each continuous seminorm $p$ on $E$ such that $p\geq P$
pointwise, with $V_p:=K+B^p_1(0)$.
Note that
\[
\pi_p(V_p)=\pi_p(K)+B^{E_p}_1(0)
\]
is open in $E_p$. If $y,z\in V_p$ such that $\pi_p(y)=\pi_p(z)$
and thus $p(z-y)=0$, we have $f(y)=f(z)$ by (\ref{lip-around-cp}),
showing that
\[
h_p\colon \pi_p(V_p)\to Z,\quad \pi_p(y)\mto f(y)
\]
is well defined. By construction, $h_p\circ\pi_p|_{V_p}=f|_{V_p}$.
Note that $\pi_p(V_p)=\pi_p(K)+B^{E_p}_1(0)$ is dense in
\[
Q_p:=\pi_p(K)+B^{\wt{E}_p}_1(0).
\]
Moreover, $h_p$ is Lipschitz continuous (and thus uniformly
continuous), since
\[
\|h_p(\pi_p(z))-h_p(\pi_p(y))\|=\|f(z)-f(y)\|\leq p(z-y)=\|\pi_p(z)-\pi_p(y)\|_p
\]
for all $y,z\in V_p$. As $(Z,\|\cdot\|)$
is complete, the uniformly continuous map $h_p$ has a unique
uniformly continuous
extension
\[
g_p\colon Q_p\to Z.
\]
Then $g_p\circ\wt{\pi}_p|_{V_p}=h_p\circ \pi_p|_{V_p}=f|_{V_p}$.\\[2.3mm]
To perform the
induction step, let $f$ be $C^{k+2}$
and assume that the assertion of the lemma holds
for~$k$. Let $P$, $V_p$, and $g_p\colon Q_p\to Z$ for $p\geq P$ be as in
the step $k=0$. The map $df\colon U\times E\to Z$ is $C^{k+1}$
and $K\times\{0\}$ a compact subset of $U\times E$.
By the inductive hypothesis, there is a continuous
seminorm $R$ on $E\times E$
such that $(K\times\{0\})+B^R_1(0)\sub U\times E$
and there exists a $C^k$-map
\[
h_r\colon S_r\to Z
\]
on $S_r:= \tilde{\pi}_r(K\times \{0\})+B^{(E\times E)_r^{\wt{\;}}}_1(0)$
such that $h_r\circ \tilde{\pi}_r|_{W_r}=df|_{W_r}$
on $W_r:=(K\times\{0\})+B^r_1(0)$,
for each continuous seminorm $r$ on $E\times E$ such that $r\geq R$.
Note that
\[
p(x):=\max\{P(x),2R(x,0),2R(0,x)\}
\]
defines a continuous seminorm on~$E$ such that $p\geq P$.
Moreover, $r(x,y):=\max\{p(x),p(y)\}$ defines a continuous
seminorm on $E\times E$ with $r\geq R$.
Then
\[
W_r=V_p\times B^p_1(0)\quad\mbox{and}\quad
S_r=Q_p\times B^{\tilde{E}_p}_1(0),
\]
identifying $(E\times E)_r^{\wt{\;}}$
with $\tilde{E}_p\times \tilde{E}_p$.\\[2.3mm]
We now show that the directional
derivative $dg_p(x,y)$
exists for all $x\in Q_p\cap E_p$
and $y\in B^{E_p}_1(0)$, and is given by
\[
dg_p(x,y)= h_r(x,y).
\]
There exists $z\in K$ such that
\[
x\in E_p\cap (\tilde{\pi}_p(z)+B^{\tilde{E}_p}_1(0))
=E_p\cap B^{\tilde{E}_p}_1(\tilde{\pi}_p(z))
=B^{E_p}_1(\pi_p(z)).
\]
Let $\ve>0$ such that $\|x-\pi_p(z)\|_p+\ve < 1$.
Let $a\in B^p_1(z)$ such that $\pi_p(a)=x$
and $b\in B^p_1(0)$ such that $\pi_p(b)=y$.
Then $p(a+s\ve b-z)\leq p(a-z)+\ve p(b)=\|x-\pi_p(z)\|_p+\ve\|y\|_p<1$
for all $s\in [{-1},1]$,
whence $a+s \ve b\in B^p_1(z)$. For $0\not=t\in \,]{-\ve},\ve[$,
we get
\begin{eqnarray*}
\frac{g_p(x+ty)-g_p(x)}{t}
&=& \frac{1}{\ve}\frac{g_p(x+(t/\ve)(\ve y))-g_p(x)}{(t/\ve)}\\
&=&
\frac{1}{\ve}\frac{f(a+(t/\ve)(\ve b))-f(a)}{(t/\ve)}\\
&\to &\frac{1}{\ve}\, df(a,\ve b)= df(a,b)=
h_r(x,y)
\end{eqnarray*}
as $t\to 0$, as we set out to show.\\[2.3mm]
For all $(x,y)\in (Q_p\cap E_p)\times E_p$
and $s>0$ such that $y\in sB^{E_p}_1(0)$,
the directional derivative
$dg_p(x,s^{-1}y)$ exists by the preceding, whence
also $dg_p(x,y)=dg_p(x,s(s^{-1}y))$ exists by a standard argument,
and is given by
\begin{equation}\label{cover-s}
dg_p(x,y)=sdg_p(x,s^{-1}y)=sh_r(x,s^{-1}y).
\end{equation}
Thus $d(g_p|_{Q_p\cap E_p}) \colon (Q_p\cap E_p)\times E_p\to Z$
exists and is a $C^k$-map, since the sets $U_s:=(Q_p\cap E_p)\times sB^{E_p}_1(0)$
form an open cover of $(Q_p\cap E_p)\times E_p$
for $s\in\;]0,\infty[$
and $d(g_p|_{Q_p\cap E_p})(x,y)$ is $C^k$ in $(x,y)\in U_s$,
being given by~(\ref{cover-s}).
Thus $g_p|_{Q_p\cap E_p}$ is~$C^{k+1}$ and thus~$C^1$.\\[2.3mm]
For each $x_0\in Q_p$, there exists $\rho>0$
such that $B^{\tilde{E}_p}_{2\rho}(x_0)\sub Q_p$.
Let
$\Omega:=B^{\tilde{E}_p}_\rho(x_0)$.
We show that $g_p|_\Omega$
is~$C^{k+1}$; as such sets~$\Omega$ form an open cover of~$Q_p$,
this implies that~$g_p$ is~$C^{k+1}$.
The function
\[
\Delta\colon \Omega\times B^{\tilde{E}_p}_1(0)\times
\,]{-\rho},\rho[\,\to Z,\quad
(x,y,t)\mto \int_0^1
h_r(x+sty,y)\, ds
\]
is continuous, being a parameter-dependent integral
(see \cite[Lemma~1.1.11]{GaN}).
For $(x,y)\in (\Omega\cap E_p)\times B^{E_p}_1(0)$
and $0\not=t\in \,]{-\rho},\rho[$,
we have
\begin{equation}\label{both-sids}
\Delta(x,y,t)=\int_0^1d(g_p|_{Q_p\cap E_p})(x+sty,y)\,ds
=\frac{g_p(x+ty)-g(x)}{t},
\end{equation}
by the Mean Value Theorem.
Note that the right-hand side
of (\ref{both-sids}) defines
a continuous $Z$-valued function on $\Omega\times B^{\tilde{E}_p}_1(0)
\times (]{-\rho},\rho[\,\setminus\{0\})$,
in which
$(\Omega\cap E_p)\times B^{E_p}_1(0)\times (]{-\rho},\rho[\,\setminus\{0\})$
is dense. As also the left-hand side defines a continuous function there
and~$Z$ is Hausdorff, we deduce that
\[
\Delta(x,y,t)=\frac{g_p(x+ty)-g(x)}{t}
\mbox{ for all $\Omega\times B^{\tilde{E}_p}_1(0)
\times (]{-\rho},\rho[\,\setminus\{0\})$.}
\]
Letting $t\to 0$, we see that $dg_p(x,y)$ exists for all
$x\in\Omega$ and $y\in B^{\tilde{E}_p}_1(0)$,
and is given by
\[
dg_p(x,y)=\Delta(x,y,0)=h_r(x,y).
\]
As above, we deduce from this that $g_p|_\Omega$
as $C^{k+1}$. $\,\square$\\[3.5mm]
{\bf Acknowledgements.}
The research was supported by Deutsche Forschungsgemeinschaft (DFG),
project GL 357/9-1.
The authors thank the anonymous referee,
whose comments helped to improve the presentation.
{\bf Helge  Gl\"{o}ckner}, Institut f\"{u}r Mathematik, Universit\"at Paderborn,\\
Warburger Str.\ 100, 33098 Paderborn, Germany; {\tt glockner@math.upb.de}\\[2.5mm]
{\bf Joachim Hilgert}, Institut f\"{u}r Mathematik, Universit\"at Paderborn,\\
Warburger Str.\ 100, 33098 Paderborn, Germany; {\tt hilgert@math.upb.de}\vfill
\end{document}